\tikzstyle{vertex}=[circle,fill=black!25,minimum size=15pt,inner sep=0pt]
\tikzstyle{vertexSmallGray}=[circle,gray!75,fill=gray!75,minimum size=4pt,inner 
\tikzstyle{vertexSmallWhite}=[circle,fill=white,minimum size=4pt,inner sep=0pt, draw]  
\tikzstyle{vertexSmallBlack}=[circle,black,fill=black,minimum 
\tikzstyle{vertexBigBlack}=[circle,fill=black,minimum size=10pt,inner sep=0pt, 
\newtheorem{theorem}{Theorem}[section]
\newaliascnt{lemma}{theorem}
\newtheorem{lemma}[lemma]{Lemma}
\newaliascnt{conjecture}{theorem}
\newtheorem{conjecture}[conjecture]{Conjecture}
\theoremstyle{definition}
\newaliascnt{definition}{theorem}
\newtheorem{definition}[definition]{Definition}
\newcommand{\nz}{\mathbb{N}}
\providecommand{\keywords}[1]{\textbf{Keywords:} #1}
\begin{document}


\title{On $k$-Bend and Monotonic $\ell$-Bend Edge Intersection 
Graphs
of Paths on a Grid}
\author[1]{Eranda \c{C}ela}
\author[2]{Elisabeth Gaar\thanks{The second author acknowledges support by the 
Austrian Science Fund (FWF): I 3199-N31. 
We are very thankful to the anonymous 
reviewers for their careful reading and their valuable suggestions.}}
\affil[1]{TU Graz, Steyrergasse 30, Graz A-8010, Austria, 
\href{mailto:cela@math.tugraz.at}{cela@math.tugraz.at}}
\affil[2]{Johannes Kepler University Linz, Altenberger Strasse 69, 
Linz A-4040, Austria,  
\href{mailto:elisabeth.gaar@jku.at}{elisabeth.gaar@jku.at}}
\date{}

\maketitle

\begin{abstract}

If a graph $G$ can be represented by means of  paths on a grid, 
such that each vertex of $G$ corresponds to one path on the grid 
and two vertices of $G$ are adjacent if and only if the corresponding paths 
share a grid edge,
then this graph is called EPG and the representation is called EPG 
representation.
A $k$-bend EPG representation is an EPG representation in which each path has 
at most $k$ bends.
The class of all graphs that have a $k$-bend EPG representation is denoted by 
$B_k$. 
$B_\ell^m$ is the class of all graphs that have a monotonic  $\ell$-bend EPG
representation, i.e.\ an  $\ell$-bend EPG
representation,  where each path is ascending in both columns and rows. 

It is trivial that $B^m_k\subseteq B_k$ for all $k$. Moreover, it is
known that $B^m_k\subsetneqq B_k$, for $k=1$. By investigating the
$B_k$-membership and the $B^m_k$-membership of complete bipartite graphs we prove
that the inclusion is also proper for $k\in \{2,3,5\}$ and  for $k\geqslant 7$.
 In particular, we derive necessary conditions for
this membership that have to be fulfilled by $m$, $n$ and $k$, where $m$ and 
$n$ are the number of vertices on the two partition classes of the bipartite 
graph.
We conjecture that $B_{k}^{m} \subsetneqq 
B_{k}$ holds also for $k\in \{4,6\}$.

Furthermore, we show that $B_k  \not\subseteq B_{2k-9}^m$ holds for all 
$k\geqslant 5$. This implies that
 restricting the shape of the paths can lead to a significant increase of 
the number of bends needed in an EPG representation. 
So far no  bounds on the amount of that increase were known.
We prove that $B_1 \subseteq B_3^m$
holds, providing the first result of this kind.

\end{abstract}
\keywords{paths on a grid, EPG graph, (monotonic) bend number, complete 
bipartite graph}


\section{Introduction and Definitions}\label{intro:sec}

In 2009  Golumbic, Lipshteyn and
Stern~\cite{startpaper} introduced edge intersection graphs of paths on a grid.
If a graph $G$ can be represented by means of  paths on a grid, 
such that each vertex of $G$ corresponds to one path on the grid 
and two vertices of $G$ are adjacent if and only if the corresponding paths share a grid edge,
then this graph is called \emph{edge intersection graph of paths on a grid} \emph{(EPG)} 
and the representation is called \emph{EPG representation}. 
Here the term \emph{edge intersection of paths} refers to the fact that the paths share a grid edge.

A \emph{$k$-bend EPG representation} or \emph{$B_{k}$-EPG representation} is an 
EPG representation 
in which each path has at most $k$ bends. 
A graph that has a $B_{k}$-EPG representation is called \emph{$B_k$-EPG} and 
the class of all $B_k$-EPG graphs is denoted by \emph{$B_k$}.
We consider the following natural ordering of grid lines: the columns  
increase from the left to
the right and the rows increase from the  bottom to the  top. 
A path on a grid is called \emph{monotonic}, if it is ascending in both columns
and rows, i.e.\ it has the shape of a staircase that is going upwards from the
left to the right.
The graphs that have a $B_{\ell}$-EPG representation in which each path is 
monotonic are called \emph{$B_\ell^m$-EPG} and the class of all these graphs is 
denoted by \emph{$B_\ell^m$}.
The bend number $b(G)$ of a graph $G$ is the minimum $k$ such that $G$ is $B_k$-EPG. 
The monotonic bend number $b^m(G)$ of graph $G$   is defined as the minimum $\ell$ such that $G$ is 
$B_\ell^m$-EPG.
Note that already Golumbic, Lipshteyn and Stern~\cite{startpaper} showed that each graph is $B_{k}$-EPG and $B_{\ell}^{m}$-EPG for some $k$ and $\ell$.

As described in~\cite{startpaper}  the motivation for investigating EPG graphs
 was initially related to  applications from circuit
layout setting and  chip manufacturing. 
 In the knock-knee circuit layout model the wires can be seen as paths on a grid which
can cross and bend at a grid point but are not allowed to share a grid edge,
see \cite{knockknee, AsurveyOnWiring}. 
The wires can be put  in multiple layers each of them being a grid and 
such that the wires of each layer do not share a grid edge. 
In this setting the minimum number of layers needed to accomodate all wires
would be equal to the   chromatic number of the corresponding graph. 
Consider now that  a so-called transition hole is needed, whenever a wire bends. 
 If   a large number of  transition holes is included,  the layout area and
 consequently,  the cost of the chip,  may increase.
Therefore, it might be desirable to find a circuit layout setting which
 minimizes the largest number of bends used in each wire.  
In our notation this corresponds to finding  the minimum $k$ such that the corresponding graph is in $B_{k}$.

Similar graph classes  known in the literature include  
 \emph{edge intersection graphs of
paths on a tree} (EPT) (see \cite{GoJa85EPT}), \emph{vertex intersection graph of paths on a tree
(VPT)} (see~\cite{GoJa85EPTVPT}) and \emph{vertex
    intersection graphs of paths on a grid} (\emph{VPG}) (see~\cite{VPG}).
In this paper we will only deal with EPG graphs.

There has been a lot of research on EPG graphs since their introduction.
One of the topics of interest is the recognition problem of $B_k$-EPG graphs, 
i.e.\ to determine for a given $k$ and a given graph whether this graph is in 
$B_k$ ($B_k^m$).
Currently it is known that the recognition problem is NP-hard 
for $B_1$ (Heldt, Knauer and Ueckerdt~\cite{F}), 
$B_1^m$ (Cameron, Chaplick and Ho\`{a}ng~\cite{E}), 
$B_2$ and $B_2^m$ (Pergel and 
Rz{\k{a}}{\.{z}}ewski~\cite{LRecognizionB2EPGisNPComplete}).

Recently a number of results  on  combinatorial optimization problems on 
specific
$B_k$-EPG graphs have been published. Subject of investigation are  certain NP-hard  
combinatorial optimization problems which turn out to be  
tractable, i.e.\ polynomially solvable or approximable within  a guaranteed
approximation ratio,  for
$B_k$-EPG graphs, 
see~\cite{OCliqueColoringB1,NMaxIndependentSetInB1EPG,RMaxCliqueInB2,J}. 
Thus, the computation of the bend number and the monotonic bend
number of graphs or related  upper bounds is a relevant research question in
this context.  
However,  this appears to be a challenging task,
considering that even  the recognition  of $B_k$ ($B_k^m$)   graphs is
 NP-hard for $k=1$ and $k=2$, as mentioned above. 
 
 A related and more viable
 line of research is the determination of (upper bounds on)  the (monotonic) 
 bend number of
 special  graph classes.
Among the first graph class for which an upper bound on the bend number was given were planar graphs. 
The first upper bound was  $5$ and it was obtained in 2009 by  Biedl and
 Stern~\cite{C}. In 2012  Heldt, Knauer and Ueckerdt~\cite{D} improved the
 bound  to $4$ and  also showed that  $2$ is an upper bound  on 
the bend  number of outerplanar graphs. 
 \c{C}ela and Gaar~\cite{CelaGaarEPGOuterplanar}
 showed recently  that  $2$ is also an upper bound on the  monotonic bend number of
outerplanar graphs. Moreover, they give a full characterization of any 
maximal 
outerplanar graph and any cactus\footnote{A connected graph is called a cactus 
iff
 any  two simple 
 cycles in it  share at most one vertex.} 
with (monotonic) bend number equal to $0$, $1$ and $2$ in terms of forbidden induced subgraphs.

Also other graph classes were considered. 
Recently Francis and Lahiri~\cite{KHalinGraphs} proved that  Halin graphs are
in $B_2^m$
and Deniz, Nivelle, Ries and Schindl~\cite{SSplitGraphsB1} 
provided a characterization of split graphs  for which there exists a $B_1$-EPG representation which
uses only L-shaped paths on the grid, i.e.\ paths consisting of a vertical
top-bottom
segment followed by a horizontal left-right segment.

Another line of research on EPG graphs concerns the mutual relationship between the
classes $B_k$ and the classes $B_\ell^m$. Our paper is a contribution in this
direction.   
The chains of inclusions  $B_{0} \subseteq B_{1} \subseteq B_{2} \subseteq
\dots$ and $B_{0}^{m} \subseteq B_{1}^{m} \subseteq B_{2}^{m} \subseteq \dots$
trivially hold. Furthermore, $B_{0} = B_{0}^{m} \subseteq B_{1}^{m}$
and 
 $B_{k}^{m} \subseteq B_{k}$, for every $k$,  are obvious.
In~\cite{F} Heldt, Knauer and Ueckerdt dealt with the question whether the
complete bipartite graph $K_{m,n}$ on $m$ and $n$ vertices in the two
partition classes is in $B_k$. 
They identified several sufficient  conditions which have to be fulfilled by 
$m$, $n$
and $k$  to guarantee that      $K_{m,n}$ is in $B_{k}$ or $K_{m,n}$
is not  in $B_{k}$. They  used this kind of  results to prove that $B_{k} 
\subsetneqq B_{k+1}$ holds for every $k \geqslant 0$.
In this paper, we will derive new results of this type, especially for the monotonic case. 
It is still not known whether  $B_{k}^m \subsetneqq B_{k+1}^m$ also holds.

The relationship between $B_{k}$ and $B_{k}^{m}$ has already been considered in 
the literature. 
Golumbic, Lipshteyn and Stern~\cite{startpaper} conjectured that $B_{1}^{m}
\subsetneqq B_{1}$, which was confirmed in~\cite{E}.
In this paper, we show that $B^m_{k}\subsetneqq B_{k}$ also holds for  $k \in 
\{2,3,5\}$ and $k\geqslant 7$, while the cases $k=4$ and $k=6$ remain open.

Furthermore, we are interested in  the gap  between the  bend number $b(G)$ 
and the monotonic bend number $b^m(G)$ of a graph. More precisely we pose the 
question whether there exists a function $f\colon \nz\to \nz$ such that   $b^m(G)\leqslant f(b(G))$
holds for every graph $G$. As a first step towards answering this question we
show that $B_k\not\subseteq B_{2k-9}^m$ holds for any $k\in \nz$, $k\geqslant 5$, 
which  implies the existence  of  graphs for which
$b^m(G)\geqslant 2k-8$ and $b(G)\leqslant k$,  for any $k\in \nz$, $k\geqslant 5$. 
Moreover,  we show that $b(G)\leqslant 1$ implies $b^m(G)\leqslant 3$.   
\medskip

The rest of the paper is organized as follows.
 \autoref{sec:KmnBkm} deals with  the (monotonic) bend number of $K_{m,n}$. 
First we review some results from the literature on the bend number of
 $K_{m,n}$, where $m\leqslant n$.
In particular, we discuss a  theorem from \cite{F}  and point
 out that the proof of the theorem does not work out for $m=4$ and $m=5$. 
 Further,  we show that the statement of the theorem holds for $m=4$, while  we don't
 know whether it holds for $m=5$. However,  we only exploit the statement of the
 theorem for $m\geqslant 7$ in our later work. 
 In \autoref{sec:LowerBoundLemmas}, we derive two inequalities on $m$, $n$ and 
$k$ 
which have to be fulfilled if $K_{m,n}$ is in $B_{k}^{m}$.
 In \autoref{sec:LargestUpperBoundKmn} we  show that
 $b^m(K_{m,n})\leqslant 2m-2$ for every $m,n \in \nz$, $m\leqslant n$,
 which implies that $b^m(G)\leqslant 2m-2$ holds for every graph $G$ that is an 
 induced 
 subgraph of $K_{m,n}$.
  Moreover,
 we show  that this upper
 bound on $b^m(K_{m,n})$ is best possible, i.e.\  for each
 $m\in \nz$ there  exists an $n_m\in \nz$, $n_m\geqslant m$, such that 
$b^m(K_{m,n_m})=2m-2$. An analogous behavior of $b(K_{m,n})$ has been already
shown in literature (see~\cite{F}).
However, we will see that this maximum bend number is attained already 
for smaller values of $n_m$ in the monotonic case.

In \autoref{sec:B2B2m}, we present a graph which is in $B_{2}$ and not in 
$B_{2}^{m}$ in order to prove  $B_{k}^{m}  \subsetneqq B_{k}$ for $k = 2$. 
In \autoref{sec:B5B5m}, we use the results of \autoref{sec:LowerBoundLemmas} to
prove that $B_{k}^{m}  \subsetneqq B_{k}$ also 
for $k\in\{3,5\}$ and $k\geqslant 7$, 
thus answering  an open question posed in \cite{startpaper} for almost all values of 
$k$.

Finally,  in \autoref{sec:BkBlm} we  investigate the relationship between 
$B_k$ and $B_\ell^m$ for $\ell > k$. 
In \autoref{sec:BkBlmWith2kMinus9} we show  that for odd $k \geqslant 5$ 
there is a graph in $B_{k}$ which is not in $B_{2k-8}^{m}$ and 
for even $k \geqslant 5$ there is a graph in $B_{k}$ which is not in 
$B_{2k-9}^{m}$. 
Then  in \autoref{sec:B1B3m} we prove that $B_{1} \subseteq 
B_{3}^{m}$, giving the first result of this kind.
We summarize our results and discuss some  open questions in 
\autoref{sec:Conclusions}.
\medskip

{\bf Terminology  and notation.}
Finally,  we settle  the  terminology and the notations used throughout the paper.
The crossings of two \emph{grid lines} are called \emph{grid points}. The part 
of a 
grid line between two consecutive grid points is called a \emph{grid edge}.
A grid edge can be \emph{horizontal} or \emph{vertical}.

A \emph{path} on a grid consists of two  grid points, called the \emph{end
points} of the path, 
and a number  of  consecutive grid edges connecting the end points. 
If the two end-points lay on different vertical grid lines, we call the 
left-most point the \emph{start point} and the other one the \emph{terminal 
point}. Otherwise, we call the lower point the \emph{start point} and the other 
one the \emph{terminal point}.
A turn of a path on the grid is called \emph{bend}
and a grid point, at which the path turns, 
is called a \emph{bend point}.
 
The part of a path between two consecutive bend points is called a 
\emph{segment}. 
Also the part of the path from the start point to the first bend point is
called a 
\emph{segment}.  
This is called the \emph{first segment} of the path.
Analogously,  the 
part of the path from the last bend point to the terminal  point is also  called 
a \emph{segment}.
This is the \emph{last segment} of the path.
We consider the intermediate segments in their natural order: 
the segment of the path following the first one is the \emph{second segment}, 
and so on.

The grid points contained in   a segment of a path which are neither
bend points nor end points of that path build the
\emph{interior} of that segment.   
Clearly any  segment consists either  entirely of horizontal grid edges  or
entirely of  vertical grid edges. We call such segments \emph{horizontal} and \emph{vertical segments}, 
respectively. 
Paths without bends correspond to (horizontal or vertical)
segments.

We say that two paths  on a grid \emph{intersect}, 
if they  have at least one common grid edge.
If two segments $S_1$, $S_2$ lie on  the same grid line but do not intersect
(if considered as paths), 
then we call them \emph{aligned}; such a  pair $(S_1, S_2)$ is called    an 
\emph{alignment}. Figure~\ref{fig:possibleACP}(a) depicts two aligned
segments $S_1$ and $S_2$.  

A pair  $(S_1, S_2)$ of  segments is called a
\emph{crossing} if one of the two segments lies on a horizontal grid line,
 the other segment lies on a vertical grid line, 
 and 
 there is a grid point which belongs to the interior of
both  segments. 
Figure~\ref{fig:possibleACP}(b) depicts
a crossing   $(S_1,S_2)$ with  grid point $x$ belonging to  the interior of both
segments.     

A pair  $(S_1, S_2)$ of  segments is called a
\emph{pseudocrossing} if one of the two segments lies on a horizontal grid 
line,
the other segment lies on a vertical grid line, 
and 
there is no grid point which belongs to the interior of
each of the segments. 
Figure~\ref{fig:possibleACP}(c)-(e)  depict different 
pseudocrossings. 

Given a set ${\cal P}$ of pairwise non-intersecting paths on a grid we define 
the
alignments (crossings, pseudocrossings) of  ${\cal P}$ as the set of all
alignments (crossings,  pseudocrossings) $(S_1,S_2)$ for which there exist two 
distinct 
paths $P_1,P_2\in {\cal P}$ such that $S_i$ is a segment of $P_i$,  for $i\in
\{1,2\}$. Figure~\ref{fig:possibleACP}(f) depicts two paths $P_1$ and 
$P_2$
containing two alignments (a horizontal one and a vertical one)
and two pseudocrossings.

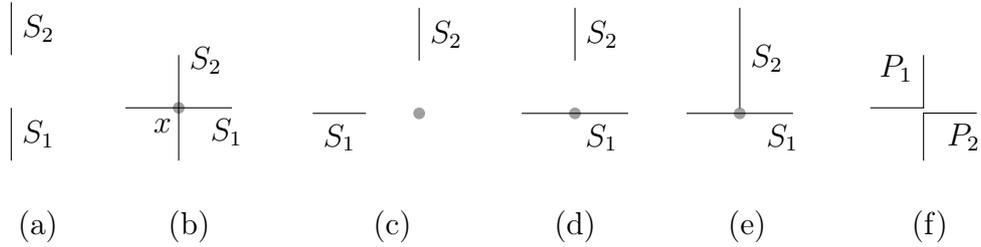
\begin{figure}[ht]
    \centering
     \begin{minipage}[b]{0.06\linewidth}
        \centering
        \begin{center}
\begin{tikzpicture}
  [scale=.7]
  
  \draw (2,1) -- (2,2) node[right,pos=0.5] {$S_{1}$};  
  \draw (2,3) -- (2,4) node[right,pos=0.5] {$S_{2}$};

  \end{tikzpicture}
\end{center}
        (a)
    \end{minipage}
    \quad
    \begin{minipage}[b]{0.15\linewidth}
        \centering
        \begin{center}
\begin{tikzpicture}
  [scale=.7]

  \node[vertexSmallGray] (v) at (2,2) {};
  \draw (1,2) -- (3,2) node[below,pos=0.95] {$S_{1}$};  
  \draw (2,1) -- (2,3) node[right,pos=0.95] {$S_{2}$};  
  \node[left,below] at (1.7,2) {$x$};

\end{tikzpicture}
\end{center}
        (b)
    \end{minipage}
    \quad 
    \begin{minipage}[b]{0.16\linewidth}
        \centering
        \begin{center}
\begin{tikzpicture}
  [scale=.7]
  
  \draw (0,1) -- (1,1) node[below,pos=0.5] {$S_{1}$};  
  \draw (2,2) -- (2,3) node[right,pos=0.5] {$S_{2}$};  
   \node[vertexSmallGray] (v) at (2,1) {};
  
  \end{tikzpicture}
\end{center}
        (c)
    \end{minipage}
    \quad
    \begin{minipage}[b]{0.11\linewidth}
        \centering
        \begin{center}
\begin{tikzpicture}
  [scale=.7]
  
   \node[vertexSmallGray] (v) at (2,1) {};
  \draw (1,1) -- (3,1) node[below,pos=0.75] {$S_{1}$};  
  \draw (2,2) -- (2,3) node[right,pos=0.5] {$S_{2}$};  
  
  \end{tikzpicture}
\end{center}
        (d)
    \end{minipage}
    \quad
    \begin{minipage}[b]{0.14\linewidth}
        \centering
        \begin{center}
    \begin{tikzpicture}
    [scale=.7]
    
    \node[vertexSmallGray] (v) at (2,1) {};
    \draw (1,1) -- (3,1) node[below,pos=0.9] {$S_{1}$};  
    \draw (2,1.1) -- (2,3) node[right,pos=0.5] {$S
      _{2}$};  
    
    \end{tikzpicture}
\end{center}
        (e)
    \end{minipage}
    \quad
    \begin{minipage}[b]{0.13\linewidth}
        \centering
        \begin{center}
\begin{tikzpicture}
  [scale=.7]
  
  \draw (1,2) -- (2,2) -- (2,3) node[left,pos=0.75] {$P_{1}$};  
  \draw (2,1) -- (2,1.9) -- (3,1.9) node[below,pos=0.75] {$P_{2}$};

  \end{tikzpicture}
\end{center}
        (f)
    \end{minipage}

    \caption{(a) An alignment $(S_1,S_2)$. 
        (b) A crossing $(S_1,S_2)$.  
        (c)-(e) Different  pseudocrossings $(S_1,S_2)$.
        (f) Two  paths $P_1$ and $P_2$ containing two alignments
        and two pseudocrossings. }
    \label{fig:possibleACP}
\end{figure}

Finally, notice that in an EPG representation of a graph $G$ with vertex set $V$ we will 
denote by $P_v$ the  path on the grid corresponding to the vertex $v \in V$.


\section{Complete Bipartite Graphs}
\label{sec:KmnBkm}

The aim of this section is to summarize existing results on the $B_k$-EPG
representation of complete bipartite graphs and 
derive 
new upper and lower 
bounds on their (monotonic) bend number.
We start by investigating some  results from the literature in 
\autoref{sec:UpperBoundsBendNumberKmn}.
Then we derive two Lower-Bound-Lemmas in \autoref{sec:LowerBoundLemmas}.
 Eventually, in Section 2.3, we give an upper bound
 on the monotonic bend number of $K_{m,n}$ for every $m, n \in \nz$, 
 $m\leqslant n$.
 The results obtained in this section will be used in \autoref{sec:B5B5m},
where 
the relationship between  $B_{k}^{m}$ and $B_{k}$ for $k\geqslant 3$ is 
investigated.

Throughout this section we consider the complete bipartite graph $K_{m,n}$ with 
$m\leqslant n$. We denote the two partition classes of
 $K_{m,n}$ by $A$ and $B$, where $|A|=m$ and $|B|=n$.
In an EPG representation we denote the set of all paths that 
correspond to vertices of $A$ and $B$ by $\mathcal{P}_A$ and $\mathcal{P}_B$, 
respectively;   
so $\mathcal{P}_A = \{P_v: v \in A\}$ and  $\mathcal{P}_B = \{P_w: w \in B\}$.

\subsection{Upper Bounds on the Bend Number}
\label{sec:UpperBoundsBendNumberKmn}
First of all notice that the bend number of $K_{m,n}$ for $m\in \{0,1,  2\}$ is
known. The trivial case $m=0$  corresponds to a graph without any edges and 
hence
$b(K_{0,n})=b^m(K_{0,n})=0$, for all $n\in \nz$. 

The other trivial case  $m=1$
corresponds to a star graph with $n+1$ vertices.  A $B_0$-EPG representation of
this graph  consists of a horizontal path $P$ with $n$ grid
edges to  represent the central vertex, and the pairwise different grid edges of
$P$ represent the other vertices. 
Thus $b(K_{1,n})=b^m(K_{1,n})=0$,
for all $n\in \nz$.

The bend number of $K_{2,n}$ has been determined by Asinowski and Suk~\cite{B} 
for all
$n\in \nz$: $b(K_{2,n})=2$ if and only if $n\geqslant 5$, $b(K_{2,n})=1$ if and 
only if $2\leqslant 
n\leqslant 4$, and
$b(K_{2,n})=0$ if and only if $n\leqslant 1$.
The EPG representations for $K_{2,n}$ given in~\cite{B} are monotonic, therefore 
$b^m(K_{2,n}) = b(K_{2,n})$ holds for all $n \in \nz$.

The more general case $m\geqslant 3$ has been considered by Heldt, Knauer and
Ueckerdt in \cite{F}. We first 
discuss the following  result of these authors. 

\begin{theorem}[Heldt, Knauer, Ueckerdt~\cite{F}]
    \label{KmninBmMinus1}
    If $m \geqslant 4$ is even and  $n = \frac{1}{4}m^3 - \frac{1}{2}m^2 - m + 4$, then $K_{m,n}$ is in $B_{m-1}$ but not in 
    $B_{m-2}$.
    If $m \geqslant 7$ is odd and $n=\frac{1}{4}m^3 - m^2 + \frac{3}{4}m$, then 
    $K_{m,n}$ is in $B_{m-1}$ but not in 
    $B_{m-2}$. 
  \end{theorem}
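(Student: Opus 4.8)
\bigskip

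\noindent\textbf{Proof strategy.}
The plan is to establish the two assertions ``$K_{m,n}\in B_{m-1}$'' and ``$K_{m,n}\notin B_{m-2}$'' by two independent arguments. Both are monotone in $n$: since $B_{k}$ is closed under taking induced subgraphs, membership in $B_{m-1}$ passes from $K_{m,n}$ to $K_{m,n'}$ for $n'\leqslant n$, and non-membership in $B_{m-2}$ passes from $K_{m,n}$ to $K_{m,n'}$ for $n'\geqslant n$. Hence it suffices to exhibit a $B_{m-1}$-EPG representation of $K_{m,n}$ for the value of $n$ in the statement (which should in fact turn out to be the largest admissible one), and to prove non-membership in $B_{m-2}$ for all $n$ at least as large as that value.

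For the positive part I would give an explicit construction. Take the $m$ paths of $\mathcal{P}_A$ to be pairwise non-intersecting staircases $Q_1,\dots,Q_m$, each with $m-1$ bends and hence $m$ segments, placed as successive translates so that consecutive staircases $Q_i$, $Q_{i+1}$ bound a staircase-shaped ``channel''. Together the $Q_i$ cut a rectangular portion of the grid into $O(m^2)$ cells organized into $m-1$ channels, and a path with $m-1$ bends (i.e.\ with $m$ segments) has exactly enough freedom to thread once through each of the $m$ layers $Q_1,\dots,Q_m$, so it can be made to share a grid edge with every path of $\mathcal{P}_A$. The paths of $\mathcal{P}_B$ are then realized as pairwise edge-disjoint threading paths of this kind, and the maximum number of pairwise-compatible routes is computed by counting feasible placements and optimizing over the free layout parameters (the widths and phases of the channels, and the side from which each $B$-path enters). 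I expect that a parity-dependent case distinction in this count is exactly what yields the two stated values $\tfrac{1}{4}m^{3}-\tfrac{1}{2}m^{2}-m+4$ for even $m$ and $\tfrac{1}{4}m(m-1)(m-3)$ for odd $m$; the bookkeeping should be lengthy but otherwise routine.

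For the negative part I would use a segment-counting argument. In any $B_{k}$-EPG representation every path has at most $k+1$ segments. For each edge $vw$ of $K_{m,n}$ the paths $P_v$ and $P_w$ share a grid edge, so some segment $S$ of $P_v$ and some segment $T$ of $P_w$ lie on a common grid line $\ell$ and overlap in a grid edge; since each segment belongs to a unique path, the ordered pair $(S,T)$ is charged to at most one edge. Fix a grid line $\ell$ and let $a_\ell$, $b_\ell$ denote the numbers of segments of $\mathcal{P}_A$-paths, respectively of $\mathcal{P}_B$-paths, lying on $\ell$. The $A$-segments on $\ell$ are pairwise non-overlapping and so are the $B$-segments on $\ell$; the key combinatorial fact is that the bipartite overlap graph between two such families of internally pairwise-disjoint intervals is a forest --- a shortest cycle would, at the $A$-segment of the cycle with leftmost left endpoint, force its two $B$-neighbours each to contain an interval straddling that segment's right endpoint, contradicting disjointness of the $B$-segments. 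Hence at most $a_\ell+b_\ell-1$ edges are charged to $\ell$. Summing over all grid lines that carry a charged pair, using $\sum_\ell a_\ell\leqslant m(k+1)$ and $\sum_\ell b_\ell\leqslant n(k+1)$, and noting that at least one such line carries both an $A$- and a $B$-segment, one obtains $mn\leqslant(k+1)(m+n)-1$. For $k=m-2$ this reads $n\leqslant m^{2}-m-1$, which is violated by the value of $n$ in the statement for every even $m\geqslant6$ and every odd $m\geqslant7$ (for $m=7$ one does need the bound $mn\leqslant(k+1)(m+n)-1$ rather than merely $mn\leqslant(k+1)(m+n)$).

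The remaining case $m=4$ is the main obstacle, and it is precisely the gap flagged in the discussion above: there the prescribed value $n=8$ lies well below the counting threshold $m^{2}-m-1=11$, so the inequality of the previous paragraph gives nothing, and the generic construction does not by itself certify that $8$ is optimal. I would therefore treat $m=4$ separately by a dedicated structural analysis of the possible $B_{2}$- and $B_{3}$-EPG representations of $K_{4,n}$ --- an essentially finite case study of how few segments can realize all $4n$ edges --- and I expect this to be where the real work lies.
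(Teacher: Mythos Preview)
Your plan for the positive part --- pairwise non-intersecting staircases for $\mathcal{P}_A$ and threading paths for $\mathcal{P}_B$, with a parity-dependent count --- is indeed the construction given in~\cite{F}, so there is no disagreement there (though your sketch leaves the counting entirely undone).

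For the negative part your forest argument is correct and yields the inequality $mn\leqslant (k+1)(m+n)-1$, but this is strictly weaker than what the paper (following~\cite{F}) uses. The Lower-Bound-Lemma~I states
\[
(k+1)(m+n)\ \geqslant\ mn+\sqrt{2k(m+n)},
\]
the extra square-root term coming from a lower bound on the number of grid lines that carry segments of both colors, obtained by counting crossings of the paths in $\mathcal{P}_A$. Your argument throws this information away by bounding the number of ``charged'' lines below by~$1$ rather than by $\sqrt{2k(m+n)}$.

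This difference is exactly what governs the case $m=4$. With the stronger inequality one checks directly that for $m=4$, $k=2$ the bound is already violated at $n=6$ (and a fortiori at $n=8$), so $K_{4,8}\notin B_{2}$ follows from the same lemma with no additional work. Your weaker inequality gives only $n\leqslant 11$ and hence says nothing about $K_{4,8}$; the ``dedicated structural analysis'' you propose as a patch is therefore unnecessary once the right lower bound is in place. So the gap in your proposal is not that $m=4$ requires an ad-hoc argument --- it is that your counting loses the crossing term, and recovering it closes the case uniformly.
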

  The above theorem makes no statement for the cases $m=3$ and $m=5$. 
However,  in~\cite{F} the authors  claim that the statement for odd $m$    holds also for 
$m=5$ (see~\cite[Theorem 4.4.]{F}). 
But the proof provided in~\cite{F} is not correct for $m=5$ and  we
 do not know whether the statement  is  true  in this case. 
Also  for the case $m=4$
the proof provided in~\cite{F} is not correct, however  in this case the statement is
true as argued below.

To be more precise, in~\cite{F}
on the one hand  
the authors provide a  $B_{m-1}$-EPG representation for 
$K_{m,n}$ for  $m \geqslant 3$ and $n$ defined as in \autoref{KmninBmMinus1},
i.e.\ a constructive proof for one part of~\cite[Theorem 4.4.]{F}.
On the other hand
the 
Lower-Bound-Lemma I~\cite[Lemma~4.1]{F} is used in order to show that $K_{m,n}$ 
is
not in  $B_{m-2}$ for $n$ defined as in  \autoref{KmninBmMinus1}.
This Lower-Bound-Lemma I states that 
$$(k+1)(m+n) \geqslant mn + \sqrt{2k(m+n)}$$ 
holds for every $B_{k}$-EPG representation of $K_{m,n}$ with 
$n\geqslant m \geqslant 3$. 
Further they observe  that for  $n$ defined as in  \autoref{KmninBmMinus1} the
inequality $n \geqslant (m-1)^{2}$ holds,  while   the inequality of 
the Lower-Bound-Lemma I is not fulfilled for $n\geqslant (m-1)^{2}$ and $k = 
m-2$, thus negating the membership of the corresponding graphs in $B_{m-2}$.
However,  for  $n$ defined as in  \autoref{KmninBmMinus1}, the inequality
$n\geqslant (m-1)^{2}$ holds only if  $m\geqslant 6$. Thus, the proof provided
for~\cite[Theorem 4.4]{F}   only works for $m\geqslant 6$.

For $m = 4$ we have $n=8$, and the construction in \cite{F}   proves that $K_{4,8}$ is in 
$B_{3}$. Furthermore, by applying the Lower-Bound-Lemma I  for $m=4$, $n=6$  
and 
$k=2$ we get that $K_{4,6}$ is not in $B_{2}$. This implies that also   $K_{4,8}$ is not in $B_{2}$.
Therefore, the statement of \autoref{KmninBmMinus1} is also true  for $m=4$.

If $m=5$  the construction in \cite{F}  yields that $K_{5,10}$ is in $B_{4}$. If we use 
the Lower-Bound-Lemma I, then we get that $K_{5,11}$ is not in $B_{3}$ and that 
the bend number of $K_{5,10}$ is at least 3. Therefore, the bend number of 
$K_{5,10}$ could be either 3 or 4.

\subsection{Lower-Bound-Lemmas}
\label{sec:LowerBoundLemmas}
In order to investigate the relationship between $B_{k}^{m}$ and $B_{k}$ for 
large values of $k$, we first derive a Lower-Bound-Lemma for $B_{k}^{m}$-EPG 
representations similarly to the Lower-Bound-Lemma I  for $B_{k}$-EPG 
representations from \cite{F}. To this end, we use an  auxiliary result from
\cite[Lemma 4.6]{F}. 
\begin{lemma}[Heldt, Knauer, Ueckerdt~\cite{F}]
    \label{lbl}
    Let $3 \leqslant m \leqslant n$.  Consider $K_{m,n}$ and denote by $A$ the
    subset of vertices  of cardinality $m$ in the partition of the vertex set
    of $K_{m,n}$.
    Consider further  a $B_{k}$-EPG representation of 
    $K_{m,n}$ and denote by  ${\cal
      P}_A$ be the set of the paths on the grid corresponding to the vertices of $A$ in this representation.
    Let $c$ be  the  total number of crossings of ${\cal P}_A$. Then, the
    following inequality holds: 
    \begin{align*}
    n(2m-k-2) \leqslant 2c + 2(k+1)m.
    \end{align*}
\end{lemma}
In the following we  derive inequalities on $m$, $n$ and $k$ which  hold
whenever a $K_{m,n}$ is in $B_{k}^{m}$. 
The following lemma is a first step towards such a result.
Note that  $\lfloor x \rfloor$
is the greatest integer less than or equal to $x$ 	
and $\lceil x \rceil$ is the least integer greater than or equal to $x$ for any 
real number $x$.
\begin{lemma}
    \label{condition_k3n_in_B3}
    Let $3 \leqslant m \leqslant n$. 
    Consider $K_{m,n}$ and denote by $A$ the
    subset of vertices  of cardinality $m$ in the partition of the vertex set
    of $K_{m,n}$. Consider further  a $B_{k}$-EPG representation of $K_{m,n}$
    and denote by    ${\cal
      P}_A$ be the set of the paths on the grid corresponding to the vertices of $A$ in this representation.
    Let   $a$, $c$ and $p$  be the total number of
    alignments, crossing and pseudocrossings  of  ${\cal P}_A$,
    respectively.  
    Then, the     following inequality holds: 
    \begin{align*}
    n\left(m - \left\lceil \frac{k+1}{2}\right\rceil\right) \leqslant a + 2c +
      p.
    \end{align*}   
\end{lemma}
\begin{proof}  
    Let $w$ be a vertex of $B$.
    For each vertex $v \in A$ 
    we denote by $e_{v}^w$ a fixed but arbitrarily chosen common grid edge of 
    $P_v$ and $P_w$. 
    Such an edge exists, because $P_w$ intersects $P_v$  since $w$ is 
    adjacent to all vertices of $A$. The grid edges $e_{v}^w$ 
    for all  $v \in A$
    are pairwise disjoint, because the vertices of $A$   are not adjacent to 
    each other.
    
    We order the vertices      $A = \{v_1, \dots, v_m\}$ in such a way that
    $e_{v_i}^w$ precedes $e_{v_{i+1}}^w$ in the path $P_w$, for all $i\in \{1,2,\ldots, m-1\}$. 
    Let $x_w$, $y_w$ and $z_w$ be the number of indices $i \in \{1, \dots, m-1\}$ such that 
    $e^w_{v_i}$ and $e^w_{v_{i+1}}$ lie on the same segment of $P_w$, on consecutive      segments of $P_w$, and
    neither on the same nor on consecutive segments of $P_w$,  respectively.       
    Then, clearly $x_w + y_w + z_w = m-1$ holds.
  
    If $e^w_{v_i}$ and $e^w_{v_{i+1}}$ lie neither on the 
    same nor on consecutive segments of $P_w$,  
    then the subpath of $P_w$ between (and not including) the two segments of 
    $P_w$  containing $e^w_{v_i}$ and
    $e^w_{v_{i+1}}$ contains at least one segment and does not contain any
    $e^w_{v_{i'}}$ for $i' \in \{1, \dots, m\}$.
    Let us call such a subpath a \emph{free subpath} of $P_w$. 
    Since 
    $P_w$ has at most $k+1$ segments and each free subpath is preceded and
    also 
    succeeded by a
    segment containing 
    $e^w_{v_{i'}}$ for some $i' \in \{1, \dots, m\}$, 
    the number of free subpaths 
     is at most $\left\lfloor \frac{k}{2}\right\rfloor$
  and  hence      $z_w \leqslant \left\lfloor \frac{k}{2}\right\rfloor$ holds.
        To summarize up to now we have shown that 
    \begin{align}
    \label{eq:InProofacpxyz}
    m - \left\lceil \frac{k+1}{2}\right\rceil
    = m - 1 - \left\lfloor \frac{k}{2}\right\rfloor 
    \leqslant m - 1 - z_w 
    = x_w + y_w 
    \end{align}
    holds.

    It remains to determine an upper bound on $x_w + y_w $.   Towards this end, 
  let   $S^w_i$ be the segment of $P_{v_i}$ that contains $e_{v_i}^w$ for $i \in \{1,2\ldots,m\}$.
Now  we consider the pairs $(S_i^w,S_{i+1}^w)$, $i \in \{1,2,\ldots, m-1\}$.

We  denote by  $a_w$ the number of indices $i \in \{1, \dots, m-1\}$ such that 
$e^w_{v_i}$ and $e^w_{v_{i+1}}$ lie on the same segment of $P_w$ 
and the pair $(S_i^w,S_{i+1}^w)$ is an alignment. 
It is easy to see that if $e^w_{v_i}$ and $e^w_{v_{i+1}}$ lie on the same 
segment of $P_w$, then the corresponding  segments $S^w_i$ and  $S^w_{i+1}$  of $P_{v_i}$ and $P_{v_{i+1}}$ 
lie on the same grid line and therefore $(S_i^w,S_{i+1}^w)$ is an alignment. 
Thus, $a_w = x_w$  holds.

Furthermore,  let   $c_w$ ($p_w$) denote the number of $i \in \{1, \dots, m-1\}$
such that $e^w_{v_i}$ and $e^w_{v_{i+1}}$ lie on  consecutive segments of $P_w$
and the pair $(S_i^w,S_{i+1}^w)$ is a crossing (pseudocrossing). 
If $e^w_{v_i}$ and $e^w_{v_{i+1}}$ lie on consecutive segments of $P_w$, 
then one of the corresponding  segments $S^w_i$ and  $S^w_{i+1}$  is
horizontal and the other one is vertical.
Hence $(S^w_i, S^w_{i+1})$ is either a crossing or a pseudocrossing.
Therefore,  $c_w + p_w = y_w$ holds.

As a result, we can use~\eqref{eq:InProofacpxyz} to deduce that
    \begin{align*}
m - \left\lceil \frac{k+1}{2}\right\rceil
\leqslant  x_w + y_w 
= a_w + c_w + p_w
\end{align*}
holds. 
Summing this up over all vertices $w \in B$ yields
\begin{align*}
n\left(  m - \left\lceil \frac{k+1}{2}\right\rceil \right)
\leqslant \sum_{w\in B}(a_w+c_w+p_w).
\end{align*}

    It remains to determine an upper bound on $\sum_{w\in B} (a_w + c_w + p_w)$.
    Towards this end, let $a_B =\sum_{w \in B} a_w$, $c_B =\sum_{w \in B} c_w$ 
    and $p_B =\sum_{w \in B} p_w$.    Clearly, an alignment (crossing, pseudocrossing) $(S_i^w,S_{i+1}^w)$,
    for $w \in B$ and for $i \in     \{1,2,\ldots, m-1\}$, is an alignment
    (crossing, pseudocrossing) of ${\cal P}_A$, since $S_i^w$ is a segment of
    $P_{v_i}$, for  $i \in \{1,2,\ldots,m\}$.    
    This implies that $a_B\leqslant a$ and  $p_B\leqslant p$ because the 
    alignments and 
    pseudocrossings counted in $a_B$ and $p_B$ are pairwise distinct
    due to the fact that the 
    paths in ${\cal P}_A$ are 
    pairwise non-intersecting and also the paths in 
    ${\cal P}_B$ 
    are pairwise non-intersecting.
    
    The crossings counted in $c_B$ are not necessarily pairwise distinct 
    because a crossing 
    $(S_i^w,S_{i+1}^w)$ can also appear as a crossing
    $(S_j^{w'},S_{j+1}^{w'})$, for some $w, w' \in B$, $w\neq w'$ and some  
    $i,j \in 
    \{1,2,\ldots,m-1\}$, see \autoref{fig:2CountCrossing2}. (Notice that in this
    case the vertices $v_i$ and $v_j$ coincide.)
  However, the same crossing cannot be 
    counted more than twice in $c_B$ because the paths in ${\cal P}_B$ are 
    pairwise 
    non-intersecting, so $c_B\leqslant 2c$ holds.
     
    \begin{figure}[ht]
        \centering
        \begin{center}
\begin{tikzpicture}
  [scale=.7]
  
  \draw (1,2.1) -- (1.9,2.1) -- (1.9,3) node[left,pos=0.75] {$P_{w}$};  
  \draw (2.1,1) -- (2.1,1.9) -- (3,1.9) node[below,pos=0.75] {$P_{w'}$};  
  \draw [dashed] (1,2)-- (3,2) node[below,pos=-0.4]{$S_{i}^w=S_j^{w'}$};
  \draw [dashed] (2,1) -- (2,3) node[right,pos=1]{$S_{i+1}^w=S_{j+1}^{w'}$};
  
  \end{tikzpicture}
\end{center}
        \caption{The crossings $(S_i^w,S_{i+1}^w)$ and
            $(S_j^{w'},S_{j+1}^{w'})$
            coincide.}
        \label{fig:2CountCrossing2}
    \end{figure}
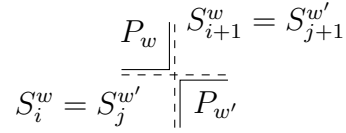
    
    Finally,  we can deduce
    \begin{align*}
    n\left(  m - \left\lceil \frac{k+1}{2}\right\rceil \right)
    \leqslant \sum_{w\in B}(a_w+c_w+p_w)
    = a_B + c_B + p_B
    \leqslant a + 2c + p.
    \end{align*}
   \end{proof}

The next lemma  gives bounds on the number of alignments, crossings and 
pseudocrossings.

\begin{lemma}
    \label{notTooManyCrossings}
    Consider two paths $P_{1}$, $P_{2}$ in a $B_{k}$-EPG representation that do 
    not intersect (i.e.\ have no grid edge in common). Let $a$, $c$ and $p$ be the number of alignments, crossings 
    and pseudocrossings of $\{P_{1}, P_{2}\}$, respectively. If one path starts 
    horizontally and the other one starts vertically, then 
    \begin{enumerate}[label=(\alph*)]
        \setlength{\itemsep}{0pt}
        \item \label{boundcPlusp} $\displaystyle c + p \leqslant 2\left\lfloor 
        \frac{k+1}{2}\right\rfloor \left\lceil \frac{k+1}{2}\right\rceil + 
        \left\lceil \frac{k+1}{2}\right\rceil -  \left\lfloor 
        \frac{k+1}{2}\right\rfloor$ and
        \item \label{boundaPlusc} if the paths are monotonic 
        $\displaystyle a + c \leqslant k+1$  holds.
    \end{enumerate}
    If both paths start horizontally or both paths start vertically, then
    \begin{enumerate}[label=(\alph*), resume]
        \setlength{\itemsep}{0pt}
        \item \label{boundcPluspBetter} $\displaystyle c + p \leqslant 
        2\left\lfloor \frac{k+1}{2}\right\rfloor \left\lceil 
        \frac{k+1}{2}\right\rceil$ and        
        \item \label{boundaPluscBetter} if the paths are monotonic 
        $\displaystyle a + c \leqslant k$ holds.
    \end{enumerate}
    
\end{lemma}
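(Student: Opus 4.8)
The argument rests on the elementary fact that the segments of any path alternate between horizontal and vertical, so a path with $s$ segments that starts horizontally has exactly $\lceil s/2\rceil$ horizontal and $\lfloor s/2\rfloor$ vertical segments, and vice versa if it starts vertically; here $s_1,s_2\le k+1$, where $s_i$ denotes the number of segments of $P_i$. For parts~\ref{boundcPlusp} and~\ref{boundcPluspBetter} the plan is to observe that a pair $(S,T)$ with $S$ a segment of $P_1$ and $T$ a segment of $P_2$ is a crossing or a pseudocrossing precisely when one of $S,T$ lies on a horizontal and the other on a vertical grid line (such a pair always satisfies exactly one of the two definitions, and a pair of two horizontal or two vertical segments satisfies neither). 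Hence $c+p=h_1v_2+v_1h_2$, where $h_i,v_i$ count the horizontal and vertical segments of $P_i$. This expression is non-decreasing in $s_1$ and in $s_2$, so it is maximized at $s_1=s_2=k+1$; plugging in the segment counts fixed by the starting directions gives $2\lfloor\frac{k+1}{2}\rfloor\lceil\frac{k+1}{2}\rceil$ when the paths start the same way and $\lceil\frac{k+1}{2}\rceil^2+\lfloor\frac{k+1}{2}\rfloor^2$ when they start differently, and the latter equals $2\lfloor\frac{k+1}{2}\rfloor\lceil\frac{k+1}{2}\rceil+(\lceil\frac{k+1}{2}\rceil-\lfloor\frac{k+1}{2}\rfloor)^2$ by the identity $p^2+q^2=2pq+(p-q)^2$.

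The monotonic parts~\ref{boundaPlusc} and~\ref{boundaPluscBetter} both reduce to one claim: every segment of $P_1$ belongs to at most one alignment or crossing of $\{P_1,P_2\}$, and symmetrically for $P_2$. I would verify this for a horizontal segment $S$ of $P_1$ on a row $r$: an alignment through $S$ needs a horizontal segment of $P_2$ on row $r$, of which there is at most one because the horizontal segments of the monotonic path $P_2$ lie on pairwise distinct rows; a crossing through $S$ needs a vertical segment of $P_2$ whose interior contains $r$, of which there is at most one because the interiors of distinct vertical segments of $P_2$ occupy disjoint (indeed separated) sets of rows; and if some horizontal segment of $P_2$ lies on row $r$, then monotonicity of $P_2$ forces every vertical segment of $P_2$ to miss $r$ in its interior, so an alignment and a crossing through $S$ cannot coexist. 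The case of a vertical segment of $P_1$, and the statement for $P_2$, follow by symmetry (using monotonicity of $P_1$). Since every alignment and every crossing of $\{P_1,P_2\}$ uses exactly one segment of $P_1$, the claim yields an injection from the alignments and crossings into the at most $k+1$ segments of $P_1$, which is exactly part~\ref{boundaPlusc}.

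For part~\ref{boundaPluscBetter} I would argue by contradiction: suppose $a+c=k+1$. Then, using also the symmetric injection into the segments of $P_2$, both paths have exactly $k+1$ segments and every segment of either path lies in exactly one alignment or crossing. Assume both paths start horizontally (the case of both starting vertically follows by reflecting the grid in the line $y=x$, which swaps horizontal and vertical and preserves everything else). Then the first segments $S_1$ of $P_1$ and $T_1$ of $P_2$ are horizontal and, by monotonicity, lie on the lowest rows of $P_1$ and $P_2$, respectively. First I would show that the alignment or crossing containing $S_1$ must also contain $T_1$: if instead it contained a segment $T_j$ of $P_2$ with $j\ge 2$, then --- whether it is an alignment with a horizontal segment of $P_2$ other than $T_1$ (hence on a higher row) or a crossing with a vertical segment of $P_2$ whose interior reaches below $S_1$'s row --- the segment $T_1$ would be forced onto a row strictly below the lowest row of $P_1$, so $T_1$ could lie in no alignment and no crossing with $P_1$, contradicting that it is used. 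Hence $S_1$ and $T_1$ are aligned on the common lowest row of the two paths. Comparing the $x$-coordinates of their start points and of their first bends and interchanging $P_1$ and $P_2$ if necessary, I may assume that the first bend of $P_1$ is not strictly to the right of the start point of $P_2$; then the second segment $S_2$ of $P_1$ is a vertical segment lying on a column into which the monotonic path $P_2$ never enters by a vertical edge nor by the interior of a horizontal segment, because $P_2$ stays weakly to the right of its start point. Thus $S_2$ lies in no alignment and no crossing with $P_2$, contradicting that $S_2$ is used; this contradiction gives $a+c\le k$.

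I expect the last step to be the only real obstacle: parts~\ref{boundcPlusp},~\ref{boundcPluspBetter} and~\ref{boundaPlusc} are essentially mechanical once the alternation of orientations and the ``one alignment-or-crossing per segment'' claim are set up, whereas squeezing out the extra $-1$ in part~\ref{boundaPluscBetter} requires the delicate boundary bookkeeping for the two initial segments sketched above, whose geometric heart is that a monotonic path is confined to the closed north-east quadrant of its start point, so a segment of one path lying strictly to the south-west of the other path's start point cannot meet that path at all.
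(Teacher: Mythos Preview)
Your proof is correct, and for parts~\ref{boundcPlusp}, \ref{boundaPlusc}, \ref{boundcPluspBetter} it is essentially identical to the paper's argument (the paper also counts horizontal--vertical segment pairs and uses the ``at most one alignment-or-crossing per segment'' observation). Your claim that $c+p$ actually \emph{equals} $h_1v_2+v_1h_2$ is a nice sharpening; the paper only states the inequality.

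For part~\ref{boundaPluscBetter} your argument works but takes a longer route than the paper's. The paper avoids the contradiction setup entirely and proceeds by a direct two-case split: if one path starts on a strictly lower row than the other, its first (horizontal) segment lies below all of the other path and is therefore unused, giving $a+c\le k$ immediately; if both start on the same row, then the path whose first segment lies further left has its \emph{second} (vertical) segment on a column that the other path never reaches, so that segment is unused and again $a+c\le k$. Your contradiction argument effectively rediscovers the second case after first spending effort to force $S_1$ and $T_1$ to be aligned (which in the paper's language just means the starting rows coincide). What your approach buys is that it makes explicit why no \emph{other} pairing of $S_1$ can occur when every segment must be used; what the paper's approach buys is brevity, since exhibiting a single unused segment in each case suffices without ever assuming $a+c=k+1$ or tracking which segment is paired with which.
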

\begin{proof}
    First we consider~\ref{boundcPlusp} and~\ref{boundcPluspBetter}. In a  crossing 
    or a pseudocrossing $(S_1,S_2)$ of  $\{P_{1},P_{2}\}$ one of the segments
    is horizontal and the other one is vertical. 
    Notice that a path that starts with a 
    horizontal segment  has at most $\left\lceil \frac{k+1}{2}\right\rceil$ 
    horizontal and at most 
    $\left\lfloor \frac{k+1}{2}\right\rfloor$ vertical segments, whereas  a path
    that starts with a  vertical segment 
     has at most $\left\lfloor \frac{k+1}{2}\right\rfloor$ horizontal and at 
     most
    $\left\lceil \frac{k+1}{2}\right\rceil$ vertical segments. 
    If one of the paths starts horizontally and the other path starts 
    vertically this 
    implies 
    that 
    \begin{align*}
    c + p &\leqslant \left\lfloor \frac{k+1}{2}\right\rfloor^{2} + \left\lceil 
    \frac{k+1}{2}\right\rceil^{2}\\
    &= 2\left\lfloor \frac{k+1}{2}\right\rfloor \left\lceil 
    \frac{k+1}{2}\right\rceil + \left(\left\lceil \frac{k+1}{2}\right\rceil -  
    \left\lfloor \frac{k+1}{2}\right\rfloor\right)^2\\
    &= 2\left\lfloor \frac{k+1}{2}\right\rfloor \left\lceil 
    \frac{k+1}{2}\right\rceil + \left\lceil \frac{k+1}{2}\right\rceil -  
    \left\lfloor \frac{k+1}{2}\right\rfloor, 
    \end{align*}
    where we can omit the square because the squared value is either $0$ or 
    $1$, and hence~\ref{boundcPlusp} holds. With the same arguments we obtain
    \begin{align*}
      c + p \leqslant 2\left\lfloor \frac{k+1}{2}\right\rfloor \left\lceil
      \frac{k+1}{2}\right\rceil
    \end{align*}
    for paths that start in the same direction. Thus~\ref{boundcPluspBetter} 
    is satisfied.   
    
    Next we consider~\ref{boundaPlusc}, so assume the paths are monotonic. It 
    is easy to see that each segment of $P_{1}$ cannot cross two or more 
    segments of $P_{2}$ and cannot be aligned with two  or more segments of 
    $P_{2}$. Furthermore, whenever a segment of $P_{1}$  
    crosses a segment of 
    $P_{2}$, it cannot be aligned with another segment of $P_{2}$.
    Moreover, whenever a segment of $P_{1}$ is aligned with a
    segment of $P_{2}$, it cannot cross another segment of $P_{2}$. Hence each 
    segment of $P_{1}$ can be 
    part of at most one crossing or alignment. This implies~\ref{boundaPlusc} 
    as $P_{1}$ has at most $k+1$ segments.

    In order to prove~\ref{boundaPluscBetter} assume without loss of generality 
    that both paths start horizontally. The arguments of~\ref{boundaPlusc} 
    imply  that each segment of each of the paths can appear in  at most one 
    crossing or one alignment. We distinguish two cases.
    If one of the paths starts in a lower grid line than the other, then the 
    first segment of this path can neither be aligned to nor cross the other 
    path. Therefore, alignments and crossings can only occur on the remaining 
    $k$ segments of the path and hence $a+c \leqslant k$ holds.
    If both paths start on the same grid line, then let without loss of
    generality the first segment of $P_{1}$ lie to the left of the first
    segment of $P_2$. It is easy to see that the second segment of $P_{1}$ can
    neither be aligned to nor cross any segment of $P_{2}$.
    Therefore, also in this case we have $a+c \leqslant k$. This 
    proves~\ref{boundaPluscBetter}.
\end{proof}

Next we   combine the bounds on the number of crossings derived in 
\autoref{notTooManyCrossings} with \autoref{lbl} in the following result.
\begin{lemma}
    \label{mlbl}
    Let $3 \leqslant m \leqslant n$. In every $B_{k}^{m}$-EPG representation of 
    $K_{m,n}$ 
    \begin{align*}
    n(2m-k-2) \leqslant k(m-1)m + \frac{1}{2}m^2 + 2(k+1)m
    \end{align*}
    holds.
\end{lemma}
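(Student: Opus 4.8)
The plan is to reduce the statement to an upper bound on the number $c$ of crossings of $\mathcal{P}_A$ and then quote \autoref{lbl}. Since a monotonic representation is in particular a $B_k$-EPG representation, \autoref{lbl} applies and gives $n(2m-k-2)\leqslant 2c+2(k+1)m$; hence it suffices to show that in any $B_k^m$-EPG representation of $K_{m,n}$ we have $2c\leqslant k(m-1)m+\tfrac12 m^2$.

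To bound $c$, I would classify the $m$ pairwise non-intersecting monotonic paths of $\mathcal{P}_A$ by their first segment: say $h$ of them start with a horizontal segment and $m-h$ start with a vertical segment. Every crossing of $\mathcal{P}_A$ is a crossing of some pair of these paths. For a pair both of whose paths start in the same direction, part~\ref{boundaPluscBetter} of \autoref{notTooManyCrossings} yields $a+c\leqslant k$, so at most $k$ crossings; for a pair starting in opposite directions, part~\ref{boundaPlusc} yields $a+c\leqslant k+1$, so at most $k+1$ crossings. There are $\binom{h}{2}+\binom{m-h}{2}$ same-direction pairs and $h(m-h)$ opposite-direction pairs, and these counts sum to $\binom{m}{2}$.

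Summing the per-pair bounds and using $\binom{h}{2}+\binom{m-h}{2}=\binom{m}{2}-h(m-h)$ together with $h(m-h)\leqslant m^2/4$ gives
\[
c \;\leqslant\; k\!\left(\binom{h}{2}+\binom{m-h}{2}\right) + (k+1)\,h(m-h) \;=\; k\binom{m}{2} + h(m-h) \;\leqslant\; k\binom{m}{2} + \frac{m^2}{4}.
\]
Multiplying by $2$ gives $2c\leqslant km(m-1)+\tfrac12 m^2$, and substituting into the inequality of \autoref{lbl} yields $n(2m-k-2)\leqslant 2c+2(k+1)m\leqslant k(m-1)m+\tfrac12 m^2+2(k+1)m$, which is exactly the claimed bound.

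The only delicate point — and the reason the bound comes out tight enough — is to exploit the sharper estimate $a+c\leqslant k$ for pairs of paths starting in the same direction instead of the uniform bound $k+1$; a crude argument using $c\leqslant k+1$ for every pair would only give $c\leqslant (k+1)\binom{m}{2}$, which is too weak to reach the stated inequality. Beyond this observation the argument is routine counting, so I anticipate no further obstacle.
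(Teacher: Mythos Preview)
Your proof is correct and follows essentially the same route as the paper's own argument: apply \autoref{lbl}, then bound the number of crossings $c$ of $\mathcal{P}_A$ by splitting the pairs of paths according to whether they start in the same or in opposite directions and invoking parts~\ref{boundaPluscBetter} and~\ref{boundaPlusc} of \autoref{notTooManyCrossings}, finally using $h(m-h)\leqslant m^2/4$. The paper uses the same case distinction and the same estimates, with $\ell$ in place of your $h$.
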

\begin{proof}
    Let $c$ denote the number of crossings of the paths in 
    ${\cal P}_A$. 
    Every $B_{k}^{m}$-EPG  representation is also a $B_{k}$-EPG representation. 
    Therefore, it follows   from \autoref{lbl} that
    \begin{align}
    \label{lblinequality}
    n(2m-k-2) \leqslant 2c + 2(k+1)m
    \end{align}
    holds for every $B_{k}^{m}$-EPG representation of $K_{m,n}$. Now we give
    an upper bound on $c$. Let $\ell$ be the number of paths in ${\cal P}_A$ 
    which start with a horizontal segment. Then, $m - \ell$ paths of ${\cal
      P}_A$   start with a  vertical segment. Since the paths in ${\cal P}_A$
    are pairwise non-intersecting, the number $c$ of  crossings of
    ${\cal P}_A$ can be calculated as $c=\sum_{\{v,v'\}\subseteq A}
    c_{v,v'}$,  where
    $c_{v,v'}$ is the number of crossings of $\{P_v, P_{v'}\}$. 
    
    If both $P_v$ and $P_{v'}$  start with a horizontal (vertical) segment,
    then $c_{v,v'}\leqslant k$
      by \autoref{notTooManyCrossings}\ref{boundaPluscBetter}. If one of the 
      paths
     $P_v$ and $P_{v'}$  starts with a horizontal  segment and the other one
     starts with a vertical  segment, then $c_{v,v'}\leqslant k+1$ by 
    \autoref{notTooManyCrossings}\ref{boundaPlusc}. 
    Notice that there  are exactly $\ell(m-\ell)$ pairs of paths  $P_v$ and $P_{v'}$ 
    with the latter property and $\binom{m}{2}-\ell(m-\ell)$ pairs of  paths
    $P_v$ and $P_{v'}$  both starting with a horizontal (vertical) segment. 
    In total we get 
    \begin{align*}
    c=\sum_{\{v,v'\}\subseteq A }
    c_{v,v'}\leqslant k \binom{m}{2} + \ell(m-\ell).
    \end{align*}
    Since  $\ell(m-\ell) \leqslant 
    \left(\frac{m}{2}\right)^{2}$ for all $0 \leqslant \ell \leqslant m$ we get 
    \begin{align*}
    c &\leqslant k \binom{m}{2} + \frac{m^2}{4} = \frac{1}{2} 
    \left( k(m-1)m + \frac{1}{2}m^2 \right),
    \end{align*}
   which in combination with   \eqref{lblinequality} completes the proof.
\end{proof}

Next we  combine the bounds on the number of crossings derived in 
\autoref{notTooManyCrossings} and  \autoref{condition_k3n_in_B3} as follows.
\begin{lemma}
    \label{mlbl2}
    Let $3 \leqslant m \leqslant n$. In every $B_{k}^{m}$-EPG representation of 
    $K_{m,n}$ 
    \begin{align*}
    n\left(m - \left\lceil \frac{k+1}{2}\right\rceil\right) \leqslant 
    \binom{m}{2}&\left( 
    2\left\lfloor \frac{k+1}{2}\right\rfloor \left\lceil 
    \frac{k+1}{2}\right\rceil
    + k \right) + \\ 
    & \frac{1}{4}m^2\left(1 + \left\lceil \frac{k+1}{2}\right\rceil -  
    \left\lfloor \frac{k+1}{2}\right\rfloor \right) 
    \end{align*}
    holds.
\end{lemma}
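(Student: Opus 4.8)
The plan is to follow the scheme of the proof of \autoref{mlbl}, but to start from \autoref{condition_k3n_in_B3} instead of \autoref{lbl} and to exploit all four estimates of \autoref{notTooManyCrossings} rather than only the crossing bounds. So fix an arbitrary $B_k^m$-EPG representation of $K_{m,n}$ and let $a$, $c$, $p$ be the total numbers of alignments, crossings and pseudocrossings of $\mathcal{P}_A$. By \autoref{condition_k3n_in_B3} we have $n(m - \lceil\frac{k+1}{2}\rceil) \leqslant a + 2c + p$, so it suffices to bound $a + 2c + p$ from above by the right-hand side of the claimed inequality.

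First I would write $a + 2c + p = (a + c) + (c + p)$ and estimate both summands pair by pair. For $\{v,v'\} \subseteq A$ let $a_{v,v'}$, $c_{v,v'}$, $p_{v,v'}$ be the numbers of alignments, crossings and pseudocrossings of $\{P_v, P_{v'}\}$. Since the paths of $\mathcal{P}_A$ are pairwise non-intersecting and every segment lies on exactly one path, each alignment (crossing, pseudocrossing) of $\mathcal{P}_A$ is an alignment (crossing, pseudocrossing) of exactly one such pair; hence $a = \sum_{\{v,v'\}\subseteq A} a_{v,v'}$, and likewise for $c$ and $p$ (in contrast to the situation in the proof of \autoref{condition_k3n_in_B3}, the double-counting phenomenon of Figure~\ref{fig:2CountCrossing2} cannot occur inside $\mathcal{P}_A$). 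Now let $\ell$ be the number of paths of $\mathcal{P}_A$ starting with a horizontal segment; then there are $\ell(m-\ell)$ pairs consisting of one path starting horizontally and one starting vertically, and $\binom{m}{2} - \ell(m-\ell)$ pairs whose two paths start in the same direction. As all paths are monotonic, for a mixed pair \autoref{notTooManyCrossings}\ref{boundcPlusp} and \ref{boundaPlusc} give $c_{v,v'} + p_{v,v'} \leqslant 2\lfloor\frac{k+1}{2}\rfloor\lceil\frac{k+1}{2}\rceil + (\lceil\frac{k+1}{2}\rceil - \lfloor\frac{k+1}{2}\rfloor)^2$ and $a_{v,v'} + c_{v,v'} \leqslant k+1$, while for a same-direction pair \autoref{notTooManyCrossings}\ref{boundcPluspBetter} and \ref{boundaPluscBetter} give $c_{v,v'} + p_{v,v'} \leqslant 2\lfloor\frac{k+1}{2}\rfloor\lceil\frac{k+1}{2}\rceil$ and $a_{v,v'} + c_{v,v'} \leqslant k$.

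Summing these per-pair bounds over all $\binom{m}{2}$ pairs, the contributions common to both pair types collect into $\binom{m}{2}\bigl(2\lfloor\frac{k+1}{2}\rfloor\lceil\frac{k+1}{2}\rceil + k\bigr)$, and the surplus of the mixed pairs amounts to $\ell(m-\ell)\bigl(1 + (\lceil\frac{k+1}{2}\rceil - \lfloor\frac{k+1}{2}\rfloor)^2\bigr)$. Finally, as in the proof of \autoref{mlbl}, using $\ell(m-\ell) \leqslant (m/2)^2 = \frac14 m^2$ for all $0 \leqslant \ell \leqslant m$ yields $a + 2c + p \leqslant \binom{m}{2}\bigl(2\lfloor\frac{k+1}{2}\rfloor\lceil\frac{k+1}{2}\rceil + k\bigr) + \frac14 m^2\bigl(1 + (\lceil\frac{k+1}{2}\rceil - \lfloor\frac{k+1}{2}\rfloor)^2\bigr)$, which combined with \autoref{condition_k3n_in_B3} is exactly the assertion. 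There is no genuine obstacle here; the only point demanding a little care is the observation that $a$, $c$ and $p$ decompose as \emph{exact} sums of the per-pair quantities, which holds precisely because we stay within $\mathcal{P}_A$, whose members are pairwise non-intersecting.
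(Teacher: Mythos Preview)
Your proof is correct and follows essentially the same approach as the paper's own proof: both start from \autoref{condition_k3n_in_B3}, split $a+2c+p$ as $\sum_{\{v,v'\}}(a_{v,v'}+c_{v,v'})+\sum_{\{v,v'\}}(c_{v,v'}+p_{v,v'})$, apply the four bounds of \autoref{notTooManyCrossings} according to whether the two paths start in the same or in different directions, and finish with $\ell(m-\ell)\leqslant m^2/4$. Your version is in fact more explicit than the paper's, which merely indicates that one should proceed ``analogously as in the proof of \autoref{mlbl}''; your remark that the per-pair decomposition of $a$, $c$, $p$ is exact because the paths in $\mathcal{P}_A$ are pairwise non-intersecting is a welcome clarification.
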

\begin{proof}
    We combine \autoref{condition_k3n_in_B3} and \autoref{notTooManyCrossings} 
    by
    proceeding analogously  as  in the proof of \autoref{mlbl}. 
    
    In particular, 
    let $a$, $c$ and $p$ be the number of alignments, crossings 
    and pseudocrossings of ${\cal P}_A$, respectively.
    As done in the proof of \autoref{mlbl} we can compute $c$ as the sum of the 
    number of crossings $c_{v,v'}$ of $\{P_v,P_{v'}\}$ over all
    pairs $\{v,v'\}\subseteq A$. 
    Similarly we write $p$ and $a$  
    as the sum of the  number of pseudocrossings $p_{v,v'}$
    (alignments $a_{v,v'}$) of $\{P_v,P_{v'}\}$ over all
    pairs $\{v,v'\}\subseteq A$. 
    Thus,  we obtain 
    $a+2c+p=\sum_{\{v,v'\}\subseteq A}
    (a_{v,v'}+c_{v,v'})+\sum_{\{v,v'\}\subseteq A}
    (c_{v,v'}+p_{v,v'})$.
    
    Then, we use  \autoref{notTooManyCrossings}~\ref{boundaPlusc} 
    and~\ref{boundaPluscBetter} to bound
    each
     summand of the first sum  from above and
     \autoref{notTooManyCrossings}~\ref{boundcPlusp} 
     and~\ref{boundcPluspBetter} to bound each
     summand of the second sum  from above. Then we transform the sum of these upper bounds
     analogously as in the proof of  \autoref{mlbl} and  finally  use \autoref{condition_k3n_in_B3}  to
     bound $a+2c+p$ from below. This   completes the proof.
   \end{proof}

To summarize  \autoref{mlbl} and \autoref{mlbl2} provide inequalities on $m$, 
$n$ and $k$ which hold whenever a $K_{m,n}$ with $3 \leqslant m \leqslant n$
is in $B_{k}^{m}$. These inequalities are  used in 
Sections~\ref{sec:LargestUpperBoundKmn} and~\ref{sec:B5B5m}.

\subsection{Upper Bounds on the Monotonic Bend Number}
\label{sec:LargestUpperBoundKmn}

In~\cite{F} a lot of work has been done to determine the bend number of 
$K_{m,n}$ in dependence of $m$ and $n$. In particular, it  was proven 
that  $b(K_{m,n})=2m-2$ for  $m\geqslant 3$ and $n \geqslant
m^4 - 2m^3 + 5m^2 - 4m + 1$.   We  deduce a similar result 
for the monotonic case.

We first generalize a result of~\cite{C}. There it was shown by slightly 
modifying a construction of~\cite{startpaper} that $K_{m,n} \in B_{2m-2}$ for 
all $n$. We modify the construction of~\cite{C} and  give an  analogous  
result for the monotonic case.

\begin{theorem}
    \label{thm:KmnInB2mMinus2}
    It holds that $K_{m,n} \in B_{2m-2}^{m}$.
\end{theorem}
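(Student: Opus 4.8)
The plan is to exhibit an explicit monotonic EPG representation of $K_{m,n}$ in which each of the $m+n$ paths uses at most $2m-2$ bends. I would base the construction on the classical "staircase" representation of $K_{m,n}$ from \cite{startpaper} (as used in \cite{C} to get $K_{m,n}\in B_{2m-2}$) and then verify that, after a suitable choice of coordinates, every path can be made monotonic without increasing its bend count. The rough picture is the following: reserve a long horizontal grid line (or a horizontal ``spine'' per $A$-vertex) and have each of the $m$ paths $P_{v_1},\dots,P_{v_m}$ corresponding to the vertices of $A$ run as an (almost) straight staircase, while each of the $n$ paths $P_w$, $w\in B$, is a single monotone staircase that ``visits'' the $m$ spines in order $v_1,v_2,\dots,v_m$, picking up exactly one shared grid edge with each $P_{v_i}$. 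Since $P_w$ has to meet $m$ distinct horizontal segments at $m$ distinct heights (and at $n$ horizontally separated positions, one block of columns per $w$), it needs $m-1$ ``ascending'' moves between consecutive spines, i.e.\ $m-1$ horizontal-to-vertical and $m-1$ vertical-to-horizontal turns, giving at most $2m-2$ bends; crucially all these turns go up and to the right, so $P_w$ is monotonic. Symmetrically each $P_{v_i}$ only needs to zig-zag enough to be crossed (or shared) by all the $P_w$'s in their $n$ column-blocks while staying on or near its spine, and this too can be arranged with $\le 2m-2$ monotone bends — in fact the $A$-paths can usually be made with far fewer, but $2m-2$ is the bound we claim.

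Concretely I would set up coordinates so that $B=\{w_1,\dots,w_n\}$ is handled left to right: the path $P_{w_j}$ lives (mostly) in the vertical strip of columns $[\,c_j,\,c_{j+1}\,]$ for a suitable increasing sequence $c_1<\dots<c_{n+1}$, and within that strip it climbs from the height of spine $v_1$ up to the height of spine $v_m$ in $m-1$ unit ``steps'', sharing one horizontal grid edge with $P_{v_i}$ at the $i$-th step. The path $P_{v_i}$ is then defined to be a monotone staircase occupying height-level $i$ over the columns used by $w_1$, rising by a tiny amount to pass to the column range of $w_2$, and so on — i.e.\ $P_{v_i}$ also makes about $2(n-1)$ small monotone jogs, but here is the point where I must be careful: I want each $P_{v_i}$ to use $\le 2m-2$ bends, not $\le 2n-2$, so the roles of $A$ and $B$ must be arranged so that the ``few-bend'' paths are the $A$-paths. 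The clean way is to make the $A$-paths the essentially straight horizontal/near-horizontal ones and concentrate the staircasing into the $B$-paths; then each $P_{v_i}$ is a single horizontal segment (0 bends, certainly $\le 2m-2$) and each $P_{w_j}$ is the $(2m-2)$-bend monotone staircase described above. I would double-check the intersection pattern: $P_{w_j}$ shares a grid edge with $P_{v_i}$ for every $i$ (adjacency in $K_{m,n}$); $P_{w_j}\cap P_{w_{j'}}=\emptyset$ because they occupy disjoint column strips; $P_{v_i}\cap P_{v_{i'}}=\emptyset$ because they lie on distinct grid lines (distinct heights). That gives exactly the edge set of $K_{m,n}$.

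The main obstacle I anticipate is the bookkeeping that simultaneously (i) keeps all $n$ of the $B$-paths pairwise non-intersecting while each still meets all $m$ spines, and (ii) keeps every path monotonic — in particular ensuring that when $P_{w_j}$ leaves spine $v_i$ and travels to spine $v_{i+1}$ it does so with only one upward and one rightward turn and does not accidentally share an edge with $P_{v_{i'}}$ for $i'\ne i$ or with $P_{w_{j'}}$. I would resolve (i) by the disjoint-column-strip device above and make each spine $P_{v_i}$ long enough to span all $n$ strips. For (ii) the monotone staircase shape makes the turn count automatic: between consecutive spines $P_{w_j}$ does ``go right a little, then up a little,'' which is two bends, and it must do this $m-1$ times, but the first rightward leg can be absorbed into an initial horizontal segment and the last upward leg needs no following segment, so the honest count is $2(m-1)=2m-2$ bends. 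To avoid spurious shared edges I would put the ``up'' portions of different $P_{w_j}$ in distinct columns (inside their own strips) and keep the horizontal ``sharing'' edges of $P_{w_j}$ with $P_{v_i}$ at mutually distinct column positions across different $j$, which is possible since each strip has plenty of columns. Once the construction is pinned down, verifying the bend bound and the adjacency correspondence is routine, so I would present the construction with a small figure (say $m=3$) and then give the general definition of the $c_j$'s and the coordinates of each path, followed by the three short intersection checks.
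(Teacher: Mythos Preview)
Your proposal is correct and, once you settle on the ``clean way'' in the second paragraph, it is exactly the paper's construction: the $m$ $A$-paths are single horizontal segments on $m$ distinct rows, and each of the $n$ $B$-paths is a monotone $(2m-2)$-bend staircase in its own column strip that shares one horizontal edge with each $A$-path. You can drop the initial detour about zig-zagging $A$-paths (which would give $2n-2$ bends, not $2m-2$) and present the straight-$A$/staircase-$B$ construction directly.
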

\begin{proof}
    In order to prove this, it is enough to give a $B_{2m-2}^{m}$-EPG 
    representation of $K_{m,n}$, which can be found in 
    \autoref{fig:KmnInB2mMinus2m}. Each  vertex of $K_{m,n}$  
    belonging to  the partition class $A$  of size $m$ is represented in the 
    grid
    by a path consisting
    of just one horizontal 
    segment.
    Each  of the $n$ vertices of the other partition class $B$  is represented 
    in the
    grid by a staircase with 
    $2m-2$ bends. The staircases have  pairwise empty intersections. 
\end{proof}

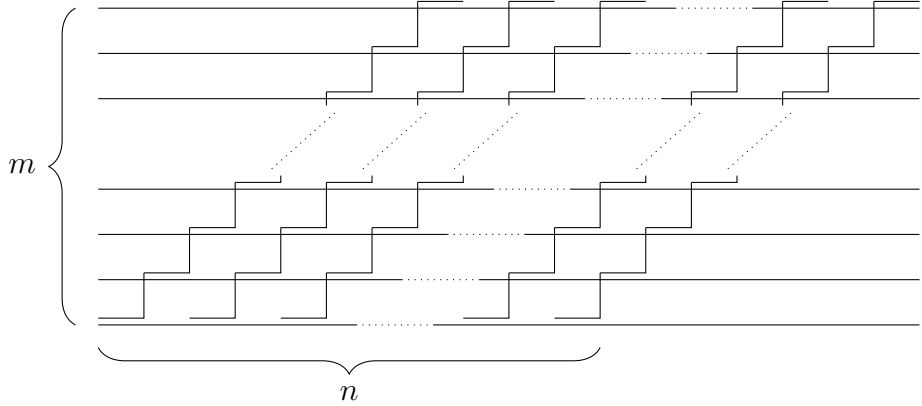
\begin{figure}[ht]
    \begin{center}
\begin{tikzpicture}
  [scale=.60]

\def \gridUp {11}
\def \gridDown {1.7}
\def \gridLeft {2}
\def \gridRight {20}
\def \vspace {0.15}

\foreach \x in {1,2,3,4,6,7,8}
{
	\draw (\gridLeft,\x) -- (\x+6.5+\vspace,\x);
	
	\draw[dotted] (\x+6.5+\vspace,\x) -- (\x+8.5-\vspace,\x);
	
	\draw (\x+8.5-\vspace,\x) -- (\gridRight,\x);		
}

\foreach \x in {1, 3, 5, 9, 11}
{
	\foreach \offset in {1, 2, 3, 6, 7}
	{
		\draw (\x + \offset,\offset+\vspace) -- (\x+1+\offset,\offset+\vspace) -- (\x+1+\offset,\offset+1+\vspace);
	}

}

\def \offset {4}

	\foreach \x in {1, 3, 5, 9, 11}
	{
		\draw (\x+\offset,\offset+\vspace) -- (\x+1+\offset,\offset+\vspace) -- (\x+1+\offset,\offset+2*\vspace);
		
		\draw[dotted] (\x+\offset+0.8,\offset+3*\vspace) -- (\x+\offset+2.2, \offset+2-2*\vspace);
		
	\draw (\x+\offset+2,\offset+2-\vspace) -- (\x+2+\offset,\offset+2 + \vspace);
		
	}

\def \offset {8}

	\foreach \x in {1, 3, 5, 9, 11}
	{
		\draw (\x+\offset,\offset+\vspace) -- (\x+\offset+1,\offset+\vspace);
	}

\draw [decorate,decoration={brace,amplitude=10pt},xshift=0pt,yshift=0pt]
(1.5,1) -- (1.5,8) node [black,midway,xshift=-0.7cm] 
{$m$};  

\draw [decorate,decoration={brace,amplitude=10pt},xshift=0pt,yshift=0pt]
(13,0.5) -- (2,0.5) node [black,midway,yshift=-0.6cm] 
{$n$};  
  
\end{tikzpicture}
\end{center}
    \caption{A $B_{2m-2}^{m}$-EPG representation of $K_{m,n}$.}
    \label{fig:KmnInB2mMinus2m}
\end{figure}

Note that \autoref{thm:KmnInB2mMinus2} implies that $b^m(G)\leqslant 2m-2$ 
holds for every graph~$G$ that is an induced 
subgraph of $K_{m,n}$.
Furthermore,  \autoref{thm:KmnInB2mMinus2}  shows that for fixed $m$ and 
varying $n$, $b(K_{m,n})\leqslant 
b^m(K_{m,n})\leqslant 2m-2$ holds.
Hence,  the upper bound on the number of 
bends needed for an EPG representation of $K_{m,n}$ with $3\leqslant m 
\leqslant n$ is
 the same, namely $2m-2$, no matter whether all kind of bends or only monotonic 
bends are allowed.
This fact is even more surprising if we take into account 
\autoref{KmnInBkNotInB2kMinus89}, which states   the existence of graphs for
which   the gap between the bend number and the monotonic bend number   
can be arbitrarily  large.

However, it turns out that  the upper bound on $b^m(K_{m,n})$ is already 
reached for
a smaller $n$ than the upper bound on $b(K_{m,n})$.
In particular, the above stated result from~\cite{F} implies that 
 $b(K_{m,n})=2m-2$ for $n \geqslant N_{1}$ for some $N_{1} \in \Theta(m^{4})$.
As a consequence of the next result it follows that
$b^m(K_{m,n})=2m-2$ for $n \geqslant N_{2}$ already for some $N_{2} 
\in \Theta(m^{3})$.

\begin{theorem}
    \label{ifTooLargeKmnIsNotInB2mMinus3m}
    Let $3 \leqslant m$. If $n \geqslant 2m^3 - \frac{1}{2}m^2 - m + 1$ then 
    $K_{m,n} \not \in B_{2m-3}^{m}$. 
\end{theorem}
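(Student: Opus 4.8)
The plan is to apply \autoref{mlbl} with $k = 2m-3$ and derive a contradiction when $n$ is too large. Suppose $K_{m,n} \in B_{2m-3}^{m}$. Substituting $k = 2m-3$ into the inequality of \autoref{mlbl}, the left-hand side coefficient becomes $2m - k - 2 = 2m - (2m-3) - 2 = 1$, so the inequality reads
\begin{align*}
n \leqslant (2m-3)(m-1)m + \tfrac{1}{2}m^2 + 2(2m-2)m.
\end{align*}
The first step is simply to expand the right-hand side: $(2m-3)(m-1)m = 2m^3 - 5m^2 + 3m$, and $2(2m-2)m = 4m^2 - 4m$, so the sum is $2m^3 - 5m^2 + 3m + \tfrac{1}{2}m^2 + 4m^2 - 4m = 2m^3 - \tfrac{1}{2}m^2 - m$. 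Hence any $B_{2m-3}^{m}$-EPG representation of $K_{m,n}$ forces $n \leqslant 2m^3 - \tfrac{1}{2}m^2 - m$. Therefore, if $n \geqslant 2m^3 - \tfrac{1}{2}m^2 - m + 1$, no such representation can exist, i.e.\ $K_{m,n} \not\in B_{2m-3}^{m}$, which is exactly the claim.

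There is essentially no obstacle here beyond bookkeeping: the real work was already done in establishing \autoref{mlbl} (which in turn rests on \autoref{lbl} and \autoref{notTooManyCrossings}), and this theorem is just the specialization $k = 2m-3$, where the coefficient of $n$ collapses to $1$ and the bound on $n$ becomes explicit. The only point worth checking carefully is the arithmetic simplification of the right-hand side and the fact that for $k = 2m-3$ we indeed have $2m - k - 2 = 1 > 0$, so that dividing through by this coefficient is legitimate and the inequality genuinely bounds $n$ from above (rather than being vacuous, as it would be for larger $k$). I would present the substitution and the one-line expansion, then conclude immediately.

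Once this is in place, it combines with \autoref{thm:KmnInB2mMinus2} (which gives $K_{m,n} \in B_{2m-2}^{m}$ for all $n$) to yield $b^m(K_{m,n}) = 2m-2$ whenever $n \geqslant 2m^3 - \tfrac{1}{2}m^2 - m + 1$, matching the informal claim in the text that the maximal monotonic bend number of $K_{m,n}$ is attained already for $n \in \Theta(m^3)$, as opposed to the $\Theta(m^4)$ threshold known for the non-monotonic bend number.
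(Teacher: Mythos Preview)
Your proof is correct and takes essentially the same approach as the paper: both apply \autoref{mlbl} with $k=2m-3$, observe that the coefficient of $n$ on the left becomes $1$, simplify the right-hand side to $2m^3-\tfrac{1}{2}m^2-m$, and conclude by contradiction. Your additional remarks about why the coefficient $2m-k-2=1$ matters and the link to \autoref{thm:KmnInB2mMinus2} are accurate commentary but go slightly beyond what the paper's own (terser) proof records.
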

\begin{proof}
    Suppose, in order to derive a contradiction,  that $K_{m,n} \in B_{2m-3}^{m}$. By applying \autoref{mlbl} for 
    $k=2m-3$ we get that
    \begin{align*}
    &&n(2m - (2m-3) - 2) &\leqslant (2m-3)(m-1)m + \frac{1}{2}m^2 + 2(2m-2)m
    \end{align*}
  Then, doing the maths operations we have that  $n \leqslant 2m^3 -
  \frac{1}{2}m^2 - m$ has to hold.  This  contradicts  $n \geqslant 2m^3 -      \frac{1}{2}m^2 - m + 1$.
\end{proof}

\section{Relationship between \texorpdfstring{$B_{k}^{m}$}{Bkm} and
  \texorpdfstring{$B_{k}$}{Bk}}
\label{allBkBkm}
It is an open question of~\cite{startpaper} to determine the relationship 
between $B_{k}^{m}$ and $B_{k}$ for $k \geqslant 1$. 
Obviously $B_{k}^{m} \subseteq B_{k}$ holds for every $k$. 
In~\cite{startpaper} Golumbic, Lipshteyn and Stern conjectured that $B_{1}^{m} 
\subsetneqq B_{1}$. 
This conjecture was confirmed by Cameron, Chaplick and Ho\`{a}ng in~\cite{E} by 
showing that the graph $S_{3}$, 
which was known to be in $B_{1}$ from~\cite{startpaper}, is not in $B_{1}^{m}$. 
The graph  $S_3$ is  isomorphic to the  subgraph induced by the vertices 
$\{a,b,c,d,e,f\}$ in  the graph represented in
\autoref{fig:B1SubseteqB3mGraphB1}(a).

In this section we consider the question whether $B_{k}^{m} \subsetneqq B_{k}$ 
holds also for $k \geqslant 2$. 
We  first consider the case $k=2$ in \autoref{sec:B2B2m} and then the remaining 
cases
$k\geqslant 3$ in \autoref{sec:B5B5m}. The case distinction is due to the 
different methods used in  
the investigations. 
\subsection{Relationship between \texorpdfstring{$B_{2}^{m}$}{B2m} and 
\texorpdfstring{$B_{2}$}{B2}}
\label{sec:B2B2m}
The aim of this section is to prove that $B_{2}^{m} \subsetneqq B_{2}$ 
holds. 
For this purpose we show that the graph $H_{1}$ represented in  
\autoref{fig:graph_not_in_b2m} is in $B_{2}$ but not in 
$B_{2}^{m}$.
$H_{1}$ is defined as follows.

\begin{figure}[ht]
    \centering
    \begin{minipage}[b]{0.52\linewidth}
        \centering
        \begin{center}
\begin{tikzpicture}
  [scale=.67]

  \node[above] at (5,10) {$v$};	
  \node[below] at (5,1) {$u$};	  
  \node[vertexSmallBlack] (u) at (5,1) {};
  \node[vertexSmallBlack] (v) at (5,10) {};

  \node[right] at (1,5) {$a_{1}$};	
  \node[right] at (3,5) {$a_{2}$};
  \node[right] at (5,5) {$a_{3}$};	
  \node[left] at (9,5) {$a_{50}$};  	 
  \node[vertexSmallWhite] (a1) at (1,5) {};   
  \node[vertexSmallWhite] (a2) at (3,5) {};       
  \node[vertexSmallWhite] (a3) at (5,5) {};
  \node[vertexSmallWhite] (a50) at (9,5) {};
  
  \node[xshift=0.2cm, yshift=0.2cm] at (2,9) {$b_{1,1}$};	
  \node[xshift=0.2cm, yshift=-0.3cm] at (2,2) {$b_{50,1}$};
  \fill[fill=gray!25] (2,8.25) ellipse (.4 and 0.75);  
  \fill[fill=gray!25] (2,6.75) ellipse (.4 and 0.75);
  \node at (2,8.25) {$H_{2}$};
  \node at (2,6.75) {$H_{2}$};    
  \node[vertexSmallBlack] (b1) at (2,9) {};
  \node[vertexSmallBlack] (b2) at (2,7.5) {};  
  \node[vertexSmallBlack] (b3) at (2,6) {};
  \node[vertexSmallBlack] (b50) at (2,2) {};
  
  \node[xshift=0.0cm, yshift=0.25cm] at (4,9) {$b_{1,2}$};	
  \node[below] at (4,2) {$b_{50,2}$};  
  \fill[fill=gray!25] (4,8.25) ellipse (.4 and 0.75);
  \fill[fill=gray!25] (4,6.75) ellipse (.4 and 0.75); 
  \node at (4,8.25) {$H_{2}$};
  \node at (4,6.75) {$H_{2}$};  
  \node[vertexSmallBlack] (c1) at (4,9) {};
  \node[vertexSmallBlack] (c2) at (4,7.5) {};  
  \node[vertexSmallBlack] (c3) at (4,6) {};
  \node[vertexSmallBlack] (c50) at (4,2) {};

	 \foreach \from/\to in {b3/b50,c3/c50,a3/a50}
  \draw[dotted, shorten <= 1cm, shorten >= 1cm] (\from) -- (\to);
 
  \draw (u) .. controls (0.9,1) .. (a1);
  \draw (u) .. controls (3,1.2) .. (a2);
  \draw (u) .. controls (5,1) .. (a3);
  \draw (u) .. controls (9.1,1) .. (a50);      
  
  \draw (v) .. controls (0.9,10) .. (a1);
  \draw (v) .. controls (3,9.8) .. (a2);
  \draw (v) .. controls (5,10) .. (a3);
  \draw (v) .. controls (9.1,10) .. (a50);
  
  \draw (a1) .. controls (1.2,9) .. (b1);
  \draw (a1) .. controls (1.3,7.5) .. (b2);
  \draw (a1) .. controls (1.4,6) .. (b3);
  \draw (a1) .. controls (1.2,2) .. (b50);
  
  \draw (a2) .. controls (2.8,9) .. (b1);
  \draw (a2) .. controls (2.7,7.5) .. (b2);
  \draw (a2) .. controls (2.6,6) .. (b3);
  \draw (a2) .. controls (2.8,2) .. (b50);         
  
  \draw (a2) .. controls (3.2,9) .. (c1);
  \draw (a2) .. controls (3.3,7.5) .. (c2);
  \draw (a2) .. controls (3.4,6) .. (c3);
  \draw (a2) .. controls (3.2,2) .. (c50);
  
  \draw (a3) .. controls (4.8,9) .. (c1);
  \draw (a3) .. controls (4.7,7.5) .. (c2);
  \draw (a3) .. controls (4.6,6) .. (c3);
  \draw (a3) .. controls (4.8,2) .. (c50);          
  
\draw [decorate,decoration={brace,amplitude=10pt},xshift=0pt,yshift=0pt]
(9,0.6) -- (1,0.6) node [black,midway,yshift=-0.6cm] 
{$50$};  

\draw [decorate,decoration={brace,amplitude=10pt},xshift=0pt,yshift=0pt]
(0.6,2) -- (0.6,9) node [black,midway,xshift=-0.7cm] 
{$50$};

\end{tikzpicture}
\end{center}
        (a)
    \end{minipage}
    \quad
    \begin{minipage}[b]{0.42\linewidth}
        \centering
        \begin{center}
\begin{tikzpicture}
  [scale=.63]
  
  \fill[fill=gray!25] (3,3) ellipse (4 and 2.5);

  \node[vertexBigBlack] (u) at (3,0.5) {};
  \node[vertexBigBlack] (v) at (3,5.5) {};
  \node[above] at (3,5.6) {$b_{i,j}$};
  \node[below] at (u) {$b_{i+1,j}$};

  \node[vertex,draw] (a1) at (2,2) {$c_{1}$};   
  \node[vertex,draw] (a2) at (1,3) {$c_{2}$};       
  \node[vertex,draw] (a3) at (2,4) {$c_{3}$};
  \node[vertex,draw] (a4) at (4,4) {$c_{4}$};   
  \node[vertex,draw] (a5) at (5,3) {$c_{5}$};       
  \node[vertex,draw] (a6) at (4,2) {$c_{6}$};
  
	\foreach \from/\to in {a1/a2,a2/a3,a3/a4,a4/a5,a5/a6,a6/a1,a1/a4,a3/a6,u/a1,u/a6,v/a3,v/a4}
  \draw[] (\from) -- (\to);

  \draw (u) .. controls (1.5,1.5) .. (a2);
  \draw (u) .. controls (1,1) and (-1.5,3) .. (a3);
  \draw (u) .. controls (5,1) and (7.5,3) .. (a4);
  \draw (u) .. controls (4.5,1.5) .. (a5);

  \draw (v) .. controls (1.5,4.5) .. (a2);
  \draw (v) .. controls (1,5) and (-1.5,3) .. (a1);
  \draw (v) .. controls (5,5) and (7.5,3) .. (a6);
  \draw (v) .. controls (4.5,4.5) .. (a5);

\end{tikzpicture}
\end{center}
        (b)
    \end{minipage}
    \caption{(a) The graph $H_{1}$. (b) The graph $H_{2}$ contained in every 
        gray area of $H_{1}$.}
    \label{fig:graph_not_in_b2m}
\end{figure}
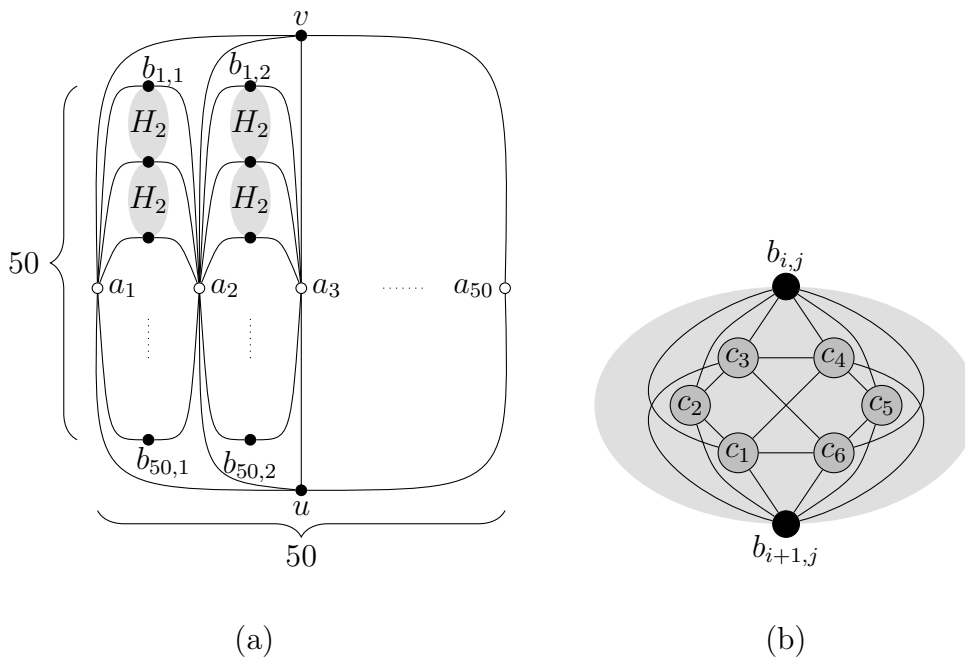

\begin{definition}
The graph $H_{1}$ depicted in \autoref{fig:graph_not_in_b2m} is constructed in 
the following way. The vertices $\{u,v\}$ and $\{a_{1}, \dots,a_{50}\}$ form a 
$K_{2,50}$. Furthermore, for every $1 \leqslant j < 50$ the vertices 
$\{a_{j},a_{j+1}\}$ and $\{b_{1,j}, \dots, b_{50,j}\}$ form a $K_{2,50}$. 
Additional to that for every $1 \leqslant j < 50$ and for every $1 \leqslant i 
< 50$ there is the graph $H_{2}$ of \autoref{fig:graph_not_in_b2m}~(b) placed 
between the vertices $b_{i,j}$ and $b_{i+1,j}$.
\end{definition}

The next result follows from a proof of Heldt, Knauer and Ueckerdt given in~\cite{D}. 
In Proposition 1 of \cite{D}  they  use a similar construction in order to prove that there
is a planar 
graph with treewidth at most $3$ which is not in $B_{2}$. Their construction
builds  also  on the graph $H_1$ (called $G$ in their paper) but the graph
suspended between any two vertices $b_{i,j}$, $b_{i+1,j}$, for $1\leqslant i,j 
< 
50$,
 (called $H$ in their paper) is a $29$-vertex graph different
from $H_2$.  In the first part of the proof of  Proposition~1
Heldt, Knauer and Ueckerdt prove some  properties of $B_2$-EPG representations 
of the subgraph   of $H_1$
 as summarized in the following lemma.  

\begin{lemma}[Heldt, Knauer, Ueckerdt~\cite{D}]
\label{conditionfork250}
In any $B_{2}$-EPG representation of the graph $H_{1}$ depicted in 
\autoref{fig:graph_not_in_b2m} there exist two indices  $i$ and $j$, 
$1\leqslant i,j\leqslant 49$, with the following properties:
\begin{enumerate}[label=(\alph*)]
\setlength{\itemsep}{0pt}
\item the paths 
$P_{b_{i,j}}$ and $P_{b_{i+1,j}}$ 
consist of three segments each,
\item there is a segment $S_{j}$ of the path $P_{a_{j}}$
which completely contains one end segment of 
 $P_{b_{i,j}}$ and one end segment of 
$P_{b_{i+1,j}}$,
\item there is a segment $S_{j+1}$ of the path 
$P_{a_{j+1}}$ which completely contains the other end segments of $P_{b_{i,j}}$ 
and 
$P_{b_{i+1,j}}$,
\item  $S_{j}$ and  $S_{j+1}$ are either both vertical segments or both
horizontal segments. 
\end{enumerate}
\end{lemma}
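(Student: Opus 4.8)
The plan is to read off the four properties from the structure of $B_2$-EPG representations of the complete bipartite graphs $K_{2,n}$ that are built into $H_1$, following the first part of the proof of Proposition~1 in~\cite{D}, with the $29$-vertex gadget used there replaced by $H_2$. It is convenient to note at the outset that property~(d) is a formal consequence of (a)--(c): a path with exactly three segments has the shape horizontal--vertical--horizontal or vertical--horizontal--vertical, so its two end segments are parallel; as one of them lies on the grid line of $S_j$ and the other on the grid line of $S_{j+1}$, the segments $S_j$ and $S_{j+1}$ are parallel, hence both horizontal or both vertical. So it suffices to produce indices $i,j$ with $1\leqslant i,j\leqslant 49$ satisfying (a), (b) and (c).

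The key tool is a rigidity statement for $B_2$-representations of $K_{2,n}$ when $n$ is large: in any $B_2$-EPG representation of $K_{2,n}$ with partition classes $\{p,q\}$ and $\{r_1,\dots,r_n\}$, there are a segment $S$ of $P_p$, a segment $S'$ of $P_q$ and a set $I\subseteq\{1,\dots,n\}$ with $|I|$ linear in $n$ such that for every $i\in I$ the path $P_{r_i}$ has one end segment contained in $S$ and the other end segment contained in $S'$. This is the type of conclusion extracted in~\cite{D}, and it is obtained by a case analysis of how the pairwise edge-disjoint paths $P_{r_1},\dots,P_{r_n}$ --- each with at most three segments --- can all edge-intersect the two edge-disjoint paths $P_p$ and $P_q$, which themselves have at most three segments. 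I would apply this statement to the copy of $K_{2,50}$ formed by $\{a_j,a_{j+1}\}$ and $\{b_{1,j},\dots,b_{50,j}\}$ for a fixed $j\in\{1,\dots,49\}$; the outer $K_{2,50}$ on $\{u,v\}$ and $\{a_1,\dots,a_{50}\}$, together with the $49$ available choices of $j$, provides the additional slack that the argument in~\cite{D} exploits and that is needed for the later use of this lemma. This produces a segment $S_j$ of $P_{a_j}$, a segment $S_{j+1}$ of $P_{a_{j+1}}$ and a large index set $I\subseteq\{1,\dots,50\}$ such that every $P_{b_{i,j}}$ with $i\in I$ has one end segment contained in $S_j$ and the other end segment contained in $S_{j+1}$. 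Since $S_j$ and $S_{j+1}$ are edge-disjoint (the vertices $a_j$ and $a_{j+1}$ are non-adjacent in $H_1$), these must be two distinct end segments, so every such $P_{b_{i,j}}$ has at least two segments. Picking two consecutive indices $i,i+1\in I$ --- which exist because $|I|$ is a large fraction of $50$ --- gives properties (b) and (c) for the pair $(i,j)$.

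It remains to upgrade ``at least two segments'' to ``exactly three segments'' for both $P_{b_{i,j}}$ and $P_{b_{i+1,j}}$, which is property~(a); this is the role of the gadget $H_2$ placed between $b_{i,j}$ and $b_{i+1,j}$. In $H_2$ the vertices $c_1,\dots,c_6$ are each adjacent to both $b_{i,j}$ and $b_{i+1,j}$, so $\{b_{i,j},b_{i+1,j}\}$ and $\{c_1,\dots,c_6\}$ induce a $K_{2,6}$, and in addition the $c_\ell$ induce the $6$-cycle $c_1c_2c_3c_4c_5c_6c_1$ together with the chords $c_1c_4$ and $c_3c_6$. I would show that in any $B_2$-EPG representation of $H_2$ neither $P_{b_{i,j}}$ nor $P_{b_{i+1,j}}$ can consist of at most two segments: two segments of $P_{b_{i,j}}$ cannot simultaneously edge-intersect six pairwise edge-disjoint paths $P_{c_1},\dots,P_{c_6}$ that are themselves forced into an essentially rigid position by the cycle-plus-chords they span. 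Hence both $b$-paths of the consecutive pair have exactly three segments (they have at most three in a $B_2$-representation), which is (a); together with (b) and (c) this yields (d), and the lemma follows.

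The main obstacle is the rigidity statement for $K_{2,n}$: making precise and proving that ``a linear number of the paths on the $n$-side have both of their end segments pinned inside a fixed segment of $P_p$ and a fixed segment of $P_q$'' requires the detailed case distinction on the shapes of at-most-three-segment grid paths that pairwise edge-intersect without sharing edges among themselves --- the technical core of the proof of Proposition~1 in~\cite{D}. Since our lemma is attributed to that source, the right way to present the argument is to reproduce that structural analysis and then verify that replacing the gadget used there by $H_2$ leaves its conclusion --- three-segment $b$-paths hosted by $S_j$ and $S_{j+1}$ --- intact; this last verification is routine but somewhat lengthy.
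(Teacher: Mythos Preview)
The paper does not supply its own proof of this lemma; it quotes the result from~\cite{D} and remarks that Heldt et al.\ establish properties (a)--(d) ``in the first part of the proof of Proposition~1'' using only the nested-$K_{2,50}$ skeleton of $H_1$, independently of which gadget is suspended between consecutive $b$-vertices. Your observation that (d) is a formal consequence of (a)--(c) is correct, and the overall plan of extracting a rigidity statement for $B_2$-representations of $K_{2,n}$ is the right shape.

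The gap is in your derivation of property~(a). You propose to use the gadget $H_2$ to rule out $b$-paths with at most two segments, arguing that ``two segments of $P_{b_{i,j}}$ cannot simultaneously edge-intersect six pairwise edge-disjoint paths $P_{c_1},\dots,P_{c_6}$''. This fails on two counts. First, the paths $P_{c_1},\dots,P_{c_6}$ are \emph{not} pairwise edge-disjoint: the $c_\ell$ span a $6$-cycle with two chords, so most pairs are adjacent. Second, and decisively, the claim is false: the paper's own \autoref{fig:graph_is_in_b2}(b) exhibits a $B_2$-EPG representation of $H_2$ in which both $P_{b_{i,j}}$ and $P_{b_{i+1,j}}$ are single horizontal segments. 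So $H_2$ by itself cannot force three segments on the $b$-paths, and your step from ``at least two'' to ``exactly three'' does not go through.

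What actually produces~(a) in~\cite{D} is the \emph{outer} $K_{2,50}$ on $\{u,v\}$ and $\{a_1,\dots,a_{50}\}$, which you mention only as supplying ``additional slack''. Applying the rigidity argument there first pins the end segments of many $P_{a_j}$ inside fixed segments of $P_u$ and $P_v$; for two consecutive such indices $j,j{+}1$ the segments $S_j$ and $S_{j+1}$ are then forced to be the middle segments of $P_{a_j}$ and $P_{a_{j+1}}$ and to lie on distinct parallel grid lines. Any $P_{b_{i,j}}$ whose end segments sit inside $S_j$ and $S_{j+1}$ must therefore bridge two distinct parallel lines and hence has exactly three segments. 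The gadget plays no role in \autoref{conditionfork250}; its job is only later, in the proof of \autoref{graph_not_in_b2m}, to derive the contradiction in the monotonic case.
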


With this auxiliary result we are able to prove the following lemma.

\begin{lemma}
\label{graph_not_in_b2m}
The graph $H_{1}$ is not in $B_{2}^{m}$.
\end{lemma}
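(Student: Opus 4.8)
The plan is to assume, for contradiction, that $H_1$ has a monotonic $B_2$-EPG representation and to derive a contradiction by combining \autoref{conditionfork250} with a local analysis of the graph $H_2$. Since every $B_2^m$-EPG representation is in particular a $B_2$-EPG representation, \autoref{conditionfork250} applies: there are indices $i,j$ with $1\leqslant i,j\leqslant 49$ such that $P_{b_{i,j}}$ and $P_{b_{i+1,j}}$ each consist of three segments, their end segments are completely contained in a single segment $S_j$ of $P_{a_j}$ (on one side) and a single segment $S_{j+1}$ of $P_{a_{j+1}}$ (on the other side), and moreover $S_j,S_{j+1}$ are both horizontal or both vertical. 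I would fix such a pair $i,j$ and abbreviate $P:=P_{b_{i,j}}$, $Q:=P_{b_{i+1,j}}$.

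First I would pin down the shape of $P$ and $Q$. Each has exactly three segments, so (being monotonic) each is a genuine ``staircase'' with two bends: in a monotonic path the segment directions must alternate, so a $3$-segment monotonic path is either horizontal–vertical–horizontal or vertical–horizontal–vertical. Its two end segments therefore have the \emph{same} orientation, which is consistent with both end segments lying inside $S_j$ and $S_{j+1}$ respectively and with $S_j,S_{j+1}$ having a common orientation. I would argue that, up to symmetry (swapping the roles of rows and columns and/or reflecting), we may assume $S_j$ and $S_{j+1}$ are both horizontal, so $P$ and $Q$ both have the pattern horizontal–vertical–horizontal, with the first (``lower'', i.e.\ left) horizontal end segment contained in $S_j$ and the last horizontal end segment contained in $S_{j+1}$ — or the other way around; here I would use monotonicity to observe that since both paths share a segment with $P_{a_j}$ and a segment with $P_{a_{j+1}}$, and $a_j$ is adjacent to $a_{j+1}$ through the $K_{2,50}$ on $\{a_j,a_{j+1}\}\cup\{b_{\cdot,j}\}$, the relative left–right order of $S_j$ and $S_{j+1}$ is the same for both $P$ and $Q$. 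The key structural consequence I want to extract is: the middle (vertical) segments of $P$ and $Q$ are parallel vertical segments, $P$ and $Q$ do not intersect each other, and the horizontal strip ``between'' $S_j$ and $S_{j+1}$ in which all of $H_2$ must be drawn is extremely constrained — essentially a bounded-width region delimited on the left by the vertical segment of $P$ (or $Q$) and on the right by the vertical segment of the other.

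Next comes the heart of the argument: the subgraph $H_2$ suspended between $b_{i,j}$ and $b_{i+1,j}$ cannot be realized monotonically inside this region. Every vertex $c_1,\dots,c_6$ of $H_2$ is adjacent to both $b_{i,j}$ and $b_{i+1,j}$, so $P_{c_\ell}$ must share a grid edge with $P$ and a grid edge with $Q$; combined with the constraint that $P$ and $Q$ are staircases whose horizontal parts lie on the two fixed lines $S_j,S_{j+1}$, each $P_{c_\ell}$ is forced into a narrow corridor. I would then count: with at most $2$ bends per path and monotonicity, the number of crossings/alignments available among the $c_\ell$'s and between the $c_\ell$'s and $P,Q$ is bounded — this is exactly the regime where \autoref{notTooManyCrossings} gives sharp bounds (each pair of monotonic paths starting in the same direction has $a+c\leqslant k=2$, etc.). Since $H_2$ is chosen to be a sufficiently dense graph (it contains, by inspection of \autoref{fig:graph_not_in_b2m}(b), enough edges among $\{c_1,\dots,c_6\}$ together with the $12$ edges to $\{b_{i,j},b_{i+1,j}\}$ to force more intersections than a monotonic $B_2$ representation inside the corridor can supply), a pigeonhole/counting contradiction follows. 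Concretely I would show that the vertical segments of the $P_{c_\ell}$ are all squeezed between the two vertical segments of $P$ and $Q$, hence pairwise ``aligned or crossing'' in a way that a monotonic representation forbids for all six of them simultaneously, while the adjacency pattern inside $H_2$ requires exactly that.

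The main obstacle I anticipate is the middle step: carefully ruling out the cases for how $P$ and $Q$, together with $S_j$ and $S_{j+1}$, can sit relative to one another in a monotonic representation, and then showing that the resulting region genuinely has no room for a monotonic drawing of $H_2$. In the non-monotonic world (Heldt et al.) one has the freedom of L-shapes going both ``ways,'' and the point of restricting to monotonic staircases is precisely that this freedom disappears — but turning ``the freedom disappears'' into a clean contradiction requires either a somewhat delicate case analysis of the at most two bends of each path, or an application of the counting inequalities of \autoref{notTooManyCrossings} to the bipartite-like structure $\{b_{i,j},b_{i+1,j}\}$ versus $\{c_1,\dots,c_6\}$ inside the bounded corridor. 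I expect the cleanest route is the latter: isolate a small induced subgraph on $\{b_{i,j},b_{i+1,j},c_1,\dots,c_6\}$ together with pieces of $P_{a_j},P_{a_{j+1}}$, observe it behaves like a $K_{2,6}$-type configuration forced into few segments, and invoke the lower-bound counting against the monotonic upper bounds to reach the contradiction.
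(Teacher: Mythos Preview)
Your opening moves are correct: invoking \autoref{conditionfork250} and pinning down $P_{b_{i,j}}$ and $P_{b_{i+1,j}}$ as three-segment monotonic staircases is exactly how the paper starts. But the plan for handling $H_2$ has a real gap.

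You overlook the decisive observation: each $c_\ell$ is adjacent to $b_{i,j}$ and $b_{i+1,j}$ but \emph{not} to $a_j$ or $a_{j+1}$. Since by \autoref{conditionfork250} the end segments of $P_{b_{i,j}}$ and $P_{b_{i+1,j}}$ lie entirely inside $S_j$ and $S_{j+1}$, any path sharing a grid edge with one of those end segments would also intersect $P_{a_j}$ or $P_{a_{j+1}}$. Hence every $P_{c_\ell}$ must share a grid edge with the \emph{middle} segment of each of $P_{b_{i,j}}$ and $P_{b_{i+1,j}}$. This forces each $P_{c_\ell}$ to be itself a three-segment monotonic staircase whose two end segments lie on two fixed parallel grid lines, with its middle segment perpendicular to them. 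That is a far more rigid conclusion than your ``squeezed into a narrow corridor'', and it is what makes the rest of the argument work.

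From there the paper does \emph{not} use counting. It argues geometrically from the edge structure of $H_2$: since $\{c_1,c_3,c_5\}$ is independent, the middle segments of $P_{c_1},P_{c_3},P_{c_5}$ are pairwise disjoint and therefore linearly ordered (left, middle, right). Then $c_4$ and $c_6$, each adjacent to all of $c_1,c_3,c_5$ but not to one another, are shown by a short case analysis to have their middle segments strictly outside this triple, one to the far left and one to the far right; this in turn traps certain end segments of $P_{c_1},P_{c_3},P_{c_5}$ inside end segments of $P_{c_4}$ and $P_{c_6}$. Finally $P_{c_2}$, which must meet the appropriate $P_{c_i}$'s while avoiding $P_{c_4}$ and $P_{c_6}$, has no legal position. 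Your proposal to reach the contradiction via \autoref{notTooManyCrossings} or a ``$K_{2,6}$-type'' lower bound is not developed enough to evaluate and in any case does not match the paper: the Lower-Bound-Lemmas of \autoref{sec:LowerBoundLemmas} require $m\geqslant 3$, and the per-pair bounds of \autoref{notTooManyCrossings} do not by themselves produce a contradiction for six paths of this shape.
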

\begin{proof}
Suppose, in order to derive a contradiction,  that $H_{1}$ is in $B_{2}^{m}$. 
Every $B_{2}^{m}$-EPG representation is a $B_{2}$-EPG representation as well, 
therefore \autoref{conditionfork250} holds also for any $B_{2}^{m}$-EPG 
representation of $H_1$. Assume without loss of generality that the center
segment (i.e.\ the second segment) of 
$P_{b_{i,j}}$ is a horizontal segment, that it is above 
the center segment of $P_{b_{i+1,j}}$ and that the 
segment $S_{j}$ of $P_{a_{j}}$ is on the left side of the 
segment $S_{j+1}$ of $P_{a_{j+1}}$. Then the positioning of the 
segments of the paths has to look like in 
\autoref{fig:position_of_theoretical_paths_of_h_a}.

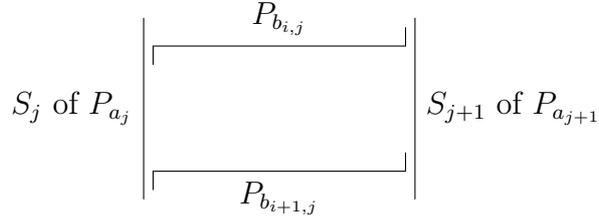
\begin{figure}[ht]
\begin{center}
\begin{tikzpicture}
  [scale=.83]

\newcommand\Vdown{1.0}
\newcommand\Vup{3.0}
\newcommand\Vleft{1.0}
\newcommand\Vright{5.0}
\newcommand\Vspace{0.15}
\newcommand\VlittleIntersection{0.3}
\tikzstyle{rectangleStyle}=[fill=gray!25]

  \draw (\Vleft - \Vspace, \Vdown - 3*\Vspace) -- node[left,pos=0.5] {$S_{j}$ 
  of  $P_{a_{j}}$} (\Vleft-\Vspace,\Vup+3*\Vspace);

  \draw (\Vright + \Vspace, \Vdown - 3*\Vspace) -- node[right,pos=0.5] 
  {$S_{j+1}$ of  $P_{a_{j+1}}$} (\Vright+\Vspace,\Vup+3*\Vspace);

  \draw (\Vleft,\Vdown  - \VlittleIntersection) -- (\Vleft,\Vdown) -- node[below,pos=0.5] {$P_{b_{i+1,j}}$} (\Vright,\Vdown) -- (\Vright,\Vdown +  \VlittleIntersection);
  
  \draw (\Vleft,\Vup  - \VlittleIntersection) -- (\Vleft,\Vup) --  node[above,pos=0.5] {$P_{b_{i,j}}$} (\Vright,\Vup) -- (\Vright,\Vup +  \VlittleIntersection);

\end{tikzpicture}
\end{center}
\caption{A part of the hypothetical $B_{2}^{m}$-EPG representation of $H_{1}$.}
\label{fig:position_of_theoretical_paths_of_h_a}
\end{figure}

Each  vertex $c_{\ell}$, $1\leqslant \ell \leqslant 6$, of the copy of    
$H_{2}$ between $b_{i,j}$ and $b_{i+1,j}$ is adjacent to both 
$b_{i,j}$ and $b_{i+1,j}$, but neither to $a_{j}$ nor to $a_{j+1}$.  Therefore, 
 each of the six paths $P_{c_1}$, $\ldots$, $P_{c_6}$ has  to
 share a grid edge with the center 
 segments of both $P_{b_{i,j}}$ and $P_{b_{i+1,j}}$.
As a result, $P_ {c_{i}}$ starts with   a first horizontal segment  
intersecting 
 the center segment of $P_{b_{i+1,j}}$, continues with a second  vertical 
 segment
 and ends with  a third  horizontal segment  intersecting the center segment of 
 $P_{b_{i,j}}$, for every for $1\leqslant i\leqslant 6$.

Now consider the vertices $c_{1}$, $c_{3}$ and $c_{5}$. They are pairwise 
nonadjacent, so $P_{c_1}$,   $P_{c_3}$, $P_{c_5}$ are
non-intersecting. Therefore, the three vertical segments of these paths are 
disjoint and can be ordered from the left to the right. Let $P_L$, $P_M$ and $P_R$ be 
the path in 
$\{P_{c_{1}}, P_{c_{3}}, P_{c_{5}}\}$ with the left-most, the middle and the 
right-most 
center segment, respectively.
In the following we say that a path $P_{c_i}$ lies to the left of, to the right 
of and on 
 another path $P_{c_j}$ if the center segment of $P_{c_{i}}$ lies to the left 
 of, 
 to the right of  
 and on the center segment of $P_{c_j}$ for some $1 \leqslant i \neq j 
 \leqslant 6$, respectively.

Next take  a closer look at the paths  $P_{c_4}$ and $P_{c_6}$. 
Each of them intersects each of the three paths  $P_{L}$, $P_{M}$ and
$P_{R}$,  since both vertices $c_4$, $c_6$ are 
adjacent to each of $c_{1}$, $c_{3}$ and $c_{5}$. Since $c_4$ and $c_6$ are not adjacent to each
other, $P_{c_4}$ and $P_{c_6}$ do not intersect and hence the vertical segments 
of $P_{c_4}$ and $P_{c_6}$ are disjoint. Assume
without loss of generality 
that $P_{c_4}$ is to the left of $P_{c_6}$.

If $P_{c_4}$ lies  to the right of or on $P_{L}$, 
then $P_{c_6}$ cannot intersect $P_{L}$ on the first or second segment of 
$P_{L}$, because $P_{c_6}$ is to the right of $P_{c_4}$ and does not intersect 
$P_{c_4}$. Therefore, $P_{c_6}$ intersects $P_{L}$ on its third segment. This 
implies that $P_{c_6}$ lies to the left of or on $P_{M}$.
But $P_{c_4}$ is to the left of $P_{c_6}$, which is to the left of or on $P_M$. 
Thus, no point 
of $P_{c_4}$ can lie to the right of the center segment of $P_M$, 
which implies that $P_{c_4}$ does not intersect $P_{R}$, a contradiction.
Analogously, it follows that $P_{c_6}$ cannot lie  to the left of or on $P_R$.

As a result $P_{c_4}$ lies to the left of $P_{L}$ and $P_{c_6}$ lies to the 
right of $P_{R}$. 
$P_{c_4}$ has to intersect $P_{R}$, so the third segment of $P_{L}$ and $P_{M}$ 
are completely contained in the third segment of $P_{c_{4}}$. Similarly 
$P_{c_6}$ has to intersect $P_{L}$, so the first segment of $P_{M}$ and $P_R$ 
are completely contained in the first segment of $P_{c_6}$. 
For an illustration of this configuration see 
\autoref{fig:position_of_theoretical_paths_of_h_b}.

\begin{figure}[ht]
\begin{center}
\begin{tikzpicture}
  [scale=.83]

\newcommand\Vdown{1.0}
\newcommand\Vup{3.0}
\newcommand\Vleft{1.0}
\newcommand\Vright{8.0}
\newcommand\Vspace{0.15}
\newcommand\Voffset{0.5}
\newcommand\VlittleIntersection{0.3}
\newcommand\VbigIntersection{0.8}
\tikzstyle{rectangleStyle}=[fill=gray!25]

  \draw (\Vleft - 3*\Vspace, \Vdown - 3*\Vspace) -- node[left,pos=0.5] {$S_{j}$ 
  of $P_{a_{j}}$} (\Vleft-3*\Vspace,\Vup+3*\Vspace);

  \draw (\Vright + 3*\Vspace, \Vdown - 3*\Vspace) -- node[right,pos=0.5] 
  {$S_{j+1}$ of $P_{a_{j+1}}$} (\Vright+3*\Vspace,\Vup+3*\Vspace);

  \draw (\Vleft - 2*\Vspace,\Vdown  - \VlittleIntersection) -- (\Vleft - 2*\Vspace,\Vdown) -- node[below,pos=0.5] {$P_{b_{i+1,j}}$} (\Vright + 2*\Vspace,\Vdown) -- (\Vright + 2*\Vspace,\Vdown +  \VlittleIntersection);
  
  \draw (\Vleft - 2*\Vspace,\Vup  - \VlittleIntersection) -- (\Vleft - 2*\Vspace,\Vup) --  node[above,pos=0.5] {$P_{b_{i,j}}$} (\Vright + 2*\Vspace,\Vup) -- (\Vright + 2*\Vspace,\Vup +  \VlittleIntersection);

  \draw[dashed] (\Vleft + \VbigIntersection + \Voffset, \Vdown + \Vspace) -- 
  (\Vleft + 2*\VbigIntersection + \Voffset, \Vdown + \Vspace) --  
  node[left,pos=0.5] {$P_L$} (\Vleft + 2*\VbigIntersection + \Voffset, \Vup - 
  \Vspace) -- (\Vleft + 3*\VbigIntersection + \Voffset, \Vup- \Vspace);  

  \draw[dashed] (\Vleft + 2*\VbigIntersection + 2*\Voffset, \Vdown + \Vspace) 
  -- (\Vleft + 3*\VbigIntersection + 2*\Voffset, \Vdown + \Vspace) --  
  node[left,pos=0.5] {$P_M$} (\Vleft + 3*\VbigIntersection + 2*\Voffset, \Vup - 
  \Vspace) -- (\Vleft + 4*\VbigIntersection + 2*\Voffset, \Vup- \Vspace); 
  
  \draw[dashed] (\Vleft + 3*\VbigIntersection + 3*\Voffset, \Vdown + \Vspace) 
  -- (\Vleft + 4*\VbigIntersection + 3*\Voffset, \Vdown + \Vspace) --  
  node[left,pos=0.5] {$P_R$} (\Vleft + 4*\VbigIntersection + 3*\Voffset, \Vup - 
  \Vspace) -- (\Vleft + 5*\VbigIntersection + 3*\Voffset, \Vup- \Vspace);

    \draw[dotted] (\Vleft + 0.5*\Voffset, \Vdown + \Vspace) -- (\Vleft + 
    1*\VbigIntersection , \Vdown + \Vspace) --  node[left,pos=0.5] 
    {$P_{c_{4}}$} (\Vleft + 1*\VbigIntersection, \Vup - 2*\Vspace) -- (\Vleft + 
    4*\VbigIntersection + 4*\Voffset, \Vup- 2*\Vspace); 

    \draw[dotted] (\Vleft + 2.8*\VbigIntersection - \Voffset, \Vdown + 
    2*\Vspace) 
    -- (\Vleft + 6*\VbigIntersection + 3*\Voffset, \Vdown + 2*\Vspace) --  
    node[right,pos=0.5] {$P_{c_{6}}$} (\Vleft + 6*\VbigIntersection + 
    3*\Voffset, \Vup - 
    \Vspace) -- (\Vleft + 6*\VbigIntersection + 4.5*\Voffset, \Vup- \Vspace);

\end{tikzpicture}
\end{center}
\caption{The only possible placement of paths $P_{c_{4}}$, $P_{c_{6}}$ and
$\{P_L, P_M, P_R\} = \{P_{c_{1}}, P_{c_{3}}, P_{c_{5}}\}$
 in the hypothetical 
$B_{2}^{m}$-EPG 
representation of $H_{1}$.}
\label{fig:position_of_theoretical_paths_of_h_b}
\end{figure}
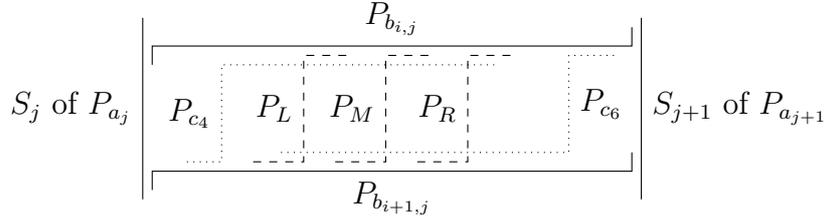

Now consider the path $P_{c_2}$. 
Note that if $P_{c_2}$ intersects the middle segment
of $P_M$, then $P_{c_2}$ is on $P_M$, in which case $P_{c_2}$ also intersects 
the first and third segments of $P_M$. So we can
conclude that if $P_{c_2}$ intersects $P_M$, then $P_{c_2}$ intersects either 
the first segment of $P_M$ or the third segment
of $P_M$. But the former is completely contained in $P_{c_6}$ and the latter 
is completely contained in $P_{c_4}$, which
means that $P_{c_2}$ intersects either $P_{c_4}$ or $P_{c_6}$, which is a 
contradiction to the fact that $c_2$ is nonadjacent to $c_4$
and $c_6$.
We can therefore conclude that $P_{c_2}$ does not intersect $P_M$. 
Since $c_2$ is adjacent to both $c_1$ and $c_3$,
this means that $P_M$ = $P_{c_5}$ and $\{P_L , P_R \} = \{P_{c_1} , P_{c_3}\}$. 
But now it is not possible for $P_{c_2}$ to intersect both
$P_L$ and $P_R$ without intersecting $P_M$.
Hence, $H_{1}$ cannot have a $B_{2}^{m}$-EPG representation.
\end{proof}

After proving that $H_1$  is not in $B_{2}^{m}$, we observe that $H_1$ is in
$B_2$ and obtain  the following theorem.

\begin{theorem}
\label{b2m_noteq_b2}
\label{bkmneqbkfork2}
It holds that $B_{2}^{m} \subsetneqq B_{2}$.
\end{theorem}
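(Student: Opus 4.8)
The plan is to establish the two inclusions/exclusions that together give $B_2^m \subsetneqq B_2$. The inclusion $B_2^m \subseteq B_2$ is immediate from the definitions, since every monotonic EPG representation is in particular an EPG representation; this needs no further argument. So the whole content lies in exhibiting a single graph that witnesses strictness, namely the graph $H_1$ of \autoref{fig:graph_not_in_b2m}.

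For the separation I would proceed in two steps. First, I would invoke \autoref{graph_not_in_b2m}, which has already been proved above: $H_1 \notin B_2^m$. Second, I need to show $H_1 \in B_2$. For this I would appeal to the work of Heldt, Knauer and Ueckerdt in~\cite{D}: their Proposition~1 constructs a $B_2$-EPG representation of a closely related graph, and the construction is flexible enough to accommodate $H_1$ (the only difference being the $29$-vertex gadget $H$ versus our $6$-vertex gadget $H_2$, and $H_2$ is easily seen to admit a local $B_2$-representation compatible with the global layout). Concretely, I would describe how to place the outer $K_{2,50}$'s as nested "comb" structures with at most two bends per path, how to route the paths $P_{b_{i,j}}$ with three segments as forced by \autoref{conditionfork250}, and how to fit the six paths $P_{c_1},\dots,P_{c_6}$ of each copy of $H_2$ into the rectangular region between consecutive $b$-vertices using the $L$-shapes and $Z$-shapes that are admissible with two bends. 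Once both facts are in hand, $H_1$ is a graph in $B_2 \setminus B_2^m$, so $B_2^m \subsetneqq B_2$.

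The main obstacle is the second step: verifying that $H_1 \in B_2$. Although this "follows from~\cite{D}", one must check that replacing their gadget by $H_2$ does not destroy the $2$-bend property — i.e.\ that $H_2$ together with its two apex vertices $b_{i,j}, b_{i+1,j}$ has a $B_2$-EPG representation in which the two $b$-paths behave exactly as required by the global construction (three segments, the end segments absorbed into $S_j$ and $S_{j+1}$). The asymmetry with the monotonic case is exactly what makes the proof go through: in a non-monotonic representation the paths $P_{c_i}$ may use $U$-turns or non-staircase $Z$-shapes, which is precisely the freedom that the argument of \autoref{graph_not_in_b2m} exploited being unavailable. I would therefore make sure the displayed $B_2$-representation (or a reference to one) explicitly uses such non-monotonic shapes for the $P_{c_i}$'s.

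\begin{proof}
The inclusion $B_2^m \subseteq B_2$ is trivial, since a monotonic $B_2$-EPG representation is in particular a $B_2$-EPG representation. For the strictness, consider the graph $H_1$ of \autoref{fig:graph_not_in_b2m}. By \autoref{graph_not_in_b2m}, $H_1 \notin B_2^m$. On the other hand, $H_1 \in B_2$: a $B_2$-EPG representation is obtained by adapting the construction in the proof of Proposition~1 of Heldt, Knauer and Ueckerdt~\cite{D}, replacing the $29$-vertex gadget suspended between consecutive $b$-vertices by the $6$-vertex graph $H_2$, which admits a $B_2$-EPG representation fitting into the rectangular region delimited by the center segments of $P_{b_{i,j}}$ and $P_{b_{i+1,j}}$ and the segments $S_j$, $S_{j+1}$, using (non-monotonic) $L$- and $Z$-shaped paths for $P_{c_1},\dots,P_{c_6}$. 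Hence $H_1$ is a graph in $B_2 \setminus B_2^m$, which proves $B_2^m \subsetneqq B_2$.
\end{proof}
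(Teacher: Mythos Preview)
Your overall strategy matches the paper's exactly: use $H_1$ as the separating example, invoke \autoref{graph_not_in_b2m} for $H_1 \notin B_2^m$, and then verify $H_1 \in B_2$. The problem is in how you justify the last step.

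You write that a $B_2$-EPG representation of $H_1$ ``is obtained by adapting the construction in the proof of Proposition~1 of Heldt, Knauer and Ueckerdt~\cite{D}''. But Proposition~1 in~\cite{D} does not construct a $B_2$-EPG representation of anything --- it proves that their graph (the one with the $29$-vertex gadget $H$ in place of $H_2$) is \emph{not} in $B_2$. That is precisely why \autoref{conditionfork250} is lifted from the first part of their argument: it describes what any hypothetical $B_2$-EPG representation would have to look like, as a step towards deriving a contradiction. So there is no positive construction in~\cite{D} to adapt in the direction you need; replacing $H$ by the simpler $H_2$ is exactly what makes membership in $B_2$ possible, and this has to be established from scratch.

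The paper handles this by exhibiting an explicit $B_2$-EPG representation of $H_1$ (\autoref{fig:graph_is_in_b2}): $P_u$ and $P_v$ are horizontal segments, the paths $P_{a_j}$ are $2$-bend shapes between them, each $P_{b_{i,j}}$ is a three-segment path between $P_{a_j}$ and $P_{a_{j+1}}$, and the six $c$-paths of each copy of $H_2$ are laid out in the strip between consecutive $b$-paths using non-monotonic $2$-bend shapes. Your informal sketch in the plan is roughly along these lines, so the fix is to drop the incorrect appeal to~\cite{D} and actually give (or reference) the explicit picture.
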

\begin{proof}
The fact that $B_{2}^{m} \subseteq B_{2}$ follows by definition. In order to 
see that strict inclusion holds, we consider the graph $H_{1}$ depicted in 
\autoref{fig:graph_not_in_b2m}.

\begin{figure}[ht]
    \centering
    \begin{minipage}[b]{0.9\linewidth}
        \centering
        \begin{center}
\begin{tikzpicture}
  [scale=.7]

\newcommand\V{6.0}
\newcommand\Vsecond{2.5}
\tikzstyle{rectangleStyle}=[fill=gray!25]

  \draw (0.8,0.9) -- (12.2,0.9) node[above,pos=0.8] {$P_{u}$};
  \draw (0.8,\V) -- (12.2,\V) node[above,pos=0.8] {$P_{v}$};
  
  \draw (0.8,1) -- node[below,pos=0.25]  {$P_{a_{1}}$}(1,1) --  (1,\V-0.1) -- (1.2,\V-0.1) ;
    \draw (3.3,1) -- node[below,pos=0.25]  {$P_{a_{2}}$}(3.5,1) --  (3.5,\V-0.1) -- (3.7,\V-0.1) ;
  \draw (5.8,1) -- node[below,pos=0.25]  {$P_{a_{3}}$}(6,1) --  (6,\V-0.1) -- (6.2,\V-0.1) ;
    \draw (8.3,1) -- node[below,pos=0.75]  {$P_{a_{4}}$}(8.5,1) --  (8.5,\V-0.1) -- (8.8,\V-0.1) ;
    \draw (11.8,1) -- node[below,pos=0.25]  {$P_{a_{50}}$}(12,1) --  (12,\V-0.1) -- (12.2,\V-0.1) ;

  \draw[dotted] (9.3,2.5) -- (11.2,2.5);

   \fill[rectangleStyle] (1.2,\V-0.75) rectangle (2.15,\V-0.35);
   \fill[rectangleStyle] (1.2,\V-1.35) rectangle (2.15,\V-0.95);
   \fill[rectangleStyle] (2.35,\V-1.05) rectangle (3.3,\V-0.65);
  \draw (1.1,\V-0.6) -- node[left,pos=0.75]  {$P_{b_{1,1}}$} (1.1,\V-0.4) --   (3.4,\V-0.4) -- (3.4,\V-0.2) ;
  
    \draw (1.1,\V-0.9) --  (1.1,\V-0.7) --   (3.4,\V-0.7)  --  node[right,pos=0.75]  {$P_{b_{2,1}}$}(3.4,\V-0.5) ;

    \draw (1.1,\V-1.2) -- node[left,pos=0.25]  {$P_{b_{3,1}}$} (1.1,\V-1.0) --   (3.4,\V-1.0) -- (3.4,\V-0.8) ;

    \draw (1.1,\V-1.5) --  (1.1,\V-1.3) --   (3.4,\V-1.3) -- node[right,pos=0.25]  {$P_{b_{4,1}}$} (3.4,\V-1.1) ;
    \draw[dotted] (2.25,\V-1.5) -- (2.25,\V-1.9);

    \draw (1.1,\V-2.3) -- node[left,pos=0.25]  {$P_{b_{50,1}}$} (1.1,\V-2.1) --   (3.4,\V-2.1) -- (3.4,\V-1.9) ;

   \fill[rectangleStyle] (6.2,\V-0.75) rectangle (7.15,\V-0.35);
   \fill[rectangleStyle] (6.2,\V-1.35) rectangle (7.15,\V-0.95);
   \fill[rectangleStyle] (7.35,\V-1.05) rectangle (8.3,\V-0.65);
  \draw (6.1,\V-0.6) -- node[left,pos=0.75]  {$P_{b_{1,3}}$} (6.1,\V-0.4) --   (8.4,\V-0.4) -- (8.4,\V-0.2) ;
  
    \draw (6.1,\V-0.9) --  (6.1,\V-0.7) --   (8.4,\V-0.7)  --  node[right,pos=0.75]  {$P_{b_{2,3}}$}(8.4,\V-0.5) ;

    \draw (6.1,\V-1.2) -- node[left,pos=0.25]  {$P_{b_{3,3}}$} (6.1,\V-1.0) --   (8.4,\V-1.0) -- (8.4,\V-0.8) ;

    \draw (6.1,\V-1.5) --  (6.1,\V-1.3) --   (8.4,\V-1.3) -- node[right,pos=0.25]  {$P_{b_{4,3}}$} (8.4,\V-1.1) ;
    \draw[dotted] (7.25,\V-1.5) -- (7.25,\V-1.9);

    \draw (6.1,\V-2.3) -- node[left,pos=0.25]  {$P_{b_{50,3}}$} (6.1,\V-2.1) --   (8.4,\V-2.1) -- (8.4,\V-1.9) ;

   \fill[rectangleStyle] (3.7,\V-\Vsecond-0.75) rectangle (4.65,\V-\Vsecond-0.35);
   \fill[rectangleStyle] (3.7,\V-\Vsecond-1.35) rectangle (4.65,\V-\Vsecond-0.95);
   \fill[rectangleStyle] (4.85,\V-\Vsecond-1.05) rectangle (5.8,\V-\Vsecond-0.65);
  \draw (3.6,\V-\Vsecond-0.6) -- node[left,pos=0.75]  {$P_{b_{1,2}}$} (3.6,\V-\Vsecond-0.4) --   (5.9,\V-\Vsecond-0.4) -- (5.9,\V-\Vsecond-0.2) ;
  
    \draw (3.6,\V-\Vsecond-0.9) --  (3.6,\V-\Vsecond-0.7) --   (5.9,\V-\Vsecond-0.7)  --  node[right,pos=0.75]  {$P_{b_{2,2}}$}(5.9,\V-\Vsecond-0.5) ;

    \draw (3.6,\V-\Vsecond-1.2) -- node[left,pos=0.25]  {$P_{b_{3,2}}$} (3.6,\V-\Vsecond-1.0) --   (5.9,\V-\Vsecond-1.0) -- (5.9,\V-\Vsecond-0.8) ;

    \draw (3.6,\V-\Vsecond-1.5) --  (3.6,\V-\Vsecond-1.3) --   (5.9,\V-\Vsecond-1.3) -- node[right,pos=0.25]  {$P_{b_{4,2}}$} (5.9,\V-\Vsecond-1.1) ;
    \draw[dotted] (4.75,\V-\Vsecond-1.5) -- (4.75,\V-\Vsecond-1.9);

    \draw (3.6,\V-\Vsecond-2.3) -- node[left,pos=0.95]  {$P_{b_{50,2}}$} (3.6,\V-\Vsecond-2.1) --   (5.9,\V-\Vsecond-2.1) -- (5.9,\V-\Vsecond-1.9) ;

\end{tikzpicture}
\end{center}
        (a)
    \end{minipage}
    
    \begin{minipage}[b]{0.9\linewidth}
        \centering
        \begin{center}
\begin{tikzpicture}
  [scale=.7]

\newcommand\Vdown{1.0}
\newcommand\Vup{3.0}
\newcommand\Vspace{0.15}
\tikzstyle{rectangleStyle}=[fill=gray!25]

   \fill[rectangleStyle] (0.3,\Vdown-2*\Vspace) rectangle (8.7,\Vup+2*\Vspace);

  \draw (0.1,\Vdown-\Vspace) -- (8.9,\Vdown-\Vspace) node[below,pos=0.9] {$P_{b_{i+1,j}}$};
  \draw (0.1,\Vup+\Vspace) -- (8.9,\Vup+\Vspace) node[above,pos=0.9] {$P_{b_{i,j}}$};
  
  \draw (0.5,\Vdown) -- (1,\Vdown) --  node[left,pos=0.25]  {$P_{c_{2}}$} (1,\Vup) -- (3,\Vup);
  
  \draw (4.5,\Vdown) -- node[below,pos=0.85]  {$P_{c_{1}}$}(2,\Vdown) --  (2,\Vup-\Vspace) -- (1.5,\Vup-\Vspace);
    
  \draw (6.5,\Vdown) -- node[below,pos=0.25]  {$P_{c_{3}}$}(5,\Vdown) --  (5,\Vup-\Vspace) -- (2.5,\Vup-\Vspace);
  
    \draw (8.5,\Vdown) -- (8,\Vdown) -- node[right,pos=0.25]  {$P_{c_{5}}$} (8,\Vup) -- (6,\Vup);
    
   \draw (3,\Vdown + \Vspace) --(3.5,\Vdown + \Vspace) --  node[right,pos=0.75]  {$P_{c_{4}}$} (3.5,\Vup-2*\Vspace) -- (6.5,\Vup-2*\Vspace);

   \draw (4,\Vdown + \Vspace) -- (7,\Vdown + \Vspace) --  node[left,pos=0.25]  {$P_{c_{6}}$}(7,\Vup-\Vspace) -- (7.5,\Vup-\Vspace);

\end{tikzpicture}
\end{center}
        (b)
    \end{minipage}
    \caption{(a) A $B_{2}$-EPG representation of the graph $H_{1}$ of 
        \autoref{fig:graph_not_in_b2m}. Every gray area 
        represents  the 
        $B_{2}$-EPG representation of $H_2$ depicted in (b).}
    \label{fig:graph_is_in_b2}
\end{figure}

We have already seen in 
\autoref{graph_not_in_b2m} that the graph $H_{1}$ is not in $B_{2}^{m}$. So it 
is enough to show that $H_{1}$ is in $B_{2}$. To this end, consider  a
$B_{2}$-EPG representation of $H_{1}$ given in  \autoref{fig:graph_is_in_b2}.
\end{proof}

Summarizing   $B_{k}^{m} \subsetneqq B_{k}$ holds 
for $k=1$ as shown in~\cite{E} and also  for $k=2$ as shown in this paper.

\subsection{Relationship between \texorpdfstring{$B_{k}^{m}$}{Bkm} and 
\texorpdfstring{$B_{k}$}{Bk} for \texorpdfstring{$k  \in \{3,5\}$ and $k 
\geqslant 7$}{k is 3, k is 5 and k greater equal 7}}
\label{sec:B5B5m}

In this section we use the results from \autoref{sec:KmnBkm} in order to 
investigate the relationship between $B_{k}^{m}$ and $B_{k}$ for $k  \in 
\{3,5\}$ and $k\geqslant 7$.

We start with $k=3$ and prove that $B_{3}^{m} \subsetneqq 
B_{3}$ holds. 
To this end we use a result of~\cite{F} to show that a particular graph
is in $B_{3}$, and then use  results of \autoref{sec:KmnBkm} to prove that this
graph is not in $B_{3}^{m}$.

\begin{lemma}
    \label{b3m_noteq_b3}
    It holds that $B_{3}^{m} \subsetneqq B_{3}$.
    \end{lemma}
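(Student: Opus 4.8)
The plan is to exhibit a single graph that lies in $B_3$ but not in $B_3^m$; since $B_3^m\subseteq B_3$ holds by definition, this is enough. Following the pattern of \autoref{sec:KmnBkm}, the witness will be a complete bipartite graph $K_{m,n}$, and the balancing act is to take $m$ small enough that $K_{m,n}$ still admits a (non‑monotonic) $3$‑bend representation, yet $n$ large enough that monotonicity provably costs a fourth bend. The natural choice is $m=4$, matching $B_3=B_{m-1}$ (one could also try $m=3$).

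For the membership $K_{4,n}\in B_3$ I would invoke the constructive side of the Heldt--Knauer--Ueckerdt results, i.e.\ the $B_{m-1}$‑EPG construction underlying \autoref{KmninBmMinus1} specialized to $m=4$: it produces an explicit $3$‑bend EPG representation of $K_{4,n}$, and one verifies that this construction accommodates values of $n$ well beyond the single ``magic'' value $n=8$ attached to \autoref{KmninBmMinus1} — in particular past the monotonic threshold found below. Any sufficient condition from~\cite{F} guaranteeing $K_{m,n}\in B_k$ with $k=3$ and $n$ in the required range would do equally well.

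For the non‑membership $K_{4,n}\notin B_3^m$ I would apply \autoref{mlbl} (or, alternatively, \autoref{mlbl2}) with $k=3$. \autoref{mlbl} says that any $B_3^m$‑EPG representation of $K_{m,n}$ forces
\[
  n(2m-5)\ \leqslant\ 3m(m-1)+\tfrac12 m^{2}+8m ;
\]
for $m=4$ this reads $3n\leqslant 76$, i.e.\ $n\leqslant 25$. Hence as soon as $n\geqslant 26$ the graph $K_{4,n}$ cannot belong to $B_3^m$. Choosing $n$ in the overlap — large enough that $3n>76$, small enough that the construction of~\cite{F} still realizes $3$ bends — yields a graph with $K_{4,n}\in B_3$ and $K_{4,n}\notin B_3^m$, and therefore $B_3^m\subsetneq B_3$.

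The main obstacle is exactly this reconciliation of two opposing bounds: \autoref{mlbl} only bites when $n$ is large relative to $m$, whereas possessing \emph{any} $3$‑bend representation already caps $n$ for fixed $m$, so the whole argument rests on the construction of~\cite{F} reaching $n$-values past the monotonic threshold (and on its covering a range of $n$, not just the value appearing in \autoref{KmninBmMinus1}). If that range were too short one would retreat to $m=3$, where \autoref{mlbl} gives $n\leqslant 46$ while the non‑monotonic side (\autoref{lbl} together with \autoref{notTooManyCrossings}, bounding the crossings of $\mathcal{P}_A$) still leaves room for larger $n$. This same tension is what prevents the strategy from settling $k=4$ and $k=6$: there the graphs produced by the available constructions are simply not large enough to violate \autoref{mlbl} or \autoref{mlbl2}.
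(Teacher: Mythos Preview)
Your overall strategy matches the paper's: exhibit a complete bipartite graph in $B_3\setminus B_3^m$, using a construction from \cite{F} for membership and one of the monotonic Lower-Bound-Lemmas for non-membership. The gap is that you never actually secure the membership side. With $m=4$ and \autoref{mlbl} you correctly get the threshold $n\geqslant 26$, but the only fact the paper (or \autoref{KmninBmMinus1}) provides for $m=4$ is $K_{4,8}\in B_3$; you assert that the construction in \cite{F} ``accommodates values of $n$ well beyond'' $8$, yet this is precisely what has to be checked and you do not check it. Your fallback to $m=3$ with \autoref{mlbl} does not close the gap either: \autoref{mlbl} at $m=3$ only forces $n\geqslant 47$, and as the paper itself notes, $b(K_{3,n})$ is \emph{unknown} in \cite{F} for $n\in[40,60]$, so you cannot quote $K_{3,47}\in B_3$.

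The paper sidesteps both problems by making two specific choices you mention only in passing: it takes $m=3$ and applies \autoref{mlbl2} rather than \autoref{mlbl}. For $m=3$, $k=3$ this gives $n(3-2)\leqslant 3(2\cdot2\cdot2+3)+\tfrac{9}{4}=35.25$, so already $K_{3,36}\notin B_3^m$. And $n=36$ lies in the range where \cite{F} explicitly computes $b(K_{3,36})=3$, so $K_{3,36}\in B_3$ is a citable fact, not a hope about how far a construction extends. In short, your plan becomes a proof once you switch to $m=3$, use \autoref{mlbl2} to push the threshold down to $36$, and invoke the concrete value $b(K_{3,36})=3$ from \cite{F}; as written, the membership step is the missing piece.
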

    \begin{proof}
      Since  $B^m_3\subseteq B_3$ obviously holds, it is enough to     show that
    $B^m_3\subsetneqq B_3$.
Heldt, Knauer, Ueckerdt~\cite{F}  showed that $b(K_{3,36}) = 3$,  hence $K_{3,36}$ belongs to $B_{3}$.
       Now assume that 
        $K_{3,36}$ is in $B_{3}^{m}$. Then by \autoref{mlbl2} we have
        \begin{align*}
        &&36\left(3 - \left\lceil\frac{4}{2}\right\rceil\right) 
        &\leqslant 
        3\left(2\left\lfloor\frac{4}{2}\right\rfloor\left\lceil\frac{4}{2}\right\rceil
         + 3\right) + \frac{1}{4}3^2,
        \end{align*}
        That is, $36 \le 35.25$, a contradiction. Hence, $K_{3,36}$ is
        not in $B_{3}^{m}$.
                \end{proof}

Now we know that $B_{k}^{m} \subsetneqq B_{k}$ holds for 
$k \leqslant 3$.
Next we show  $B_{5}^{m} \subsetneqq B_{5}$. Similarly as in the case of $k=3$
we  
use a result of~\cite{F} to show that a particular graph
is in $B_{5}$ and then use  results of \autoref{sec:KmnBkm} to prove that 
this graph is not in $B_{5}^{m}$.
 
\begin{lemma}
\label{bkmneqbkfork5}
It holds that $B_{5}^{m} \subsetneqq B_{5}$.
\end{lemma}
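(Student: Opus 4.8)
The plan is to mimic exactly the proof of \autoref{b3m_noteq_b3}, replacing $k=3$ by $k=5$. First I would invoke the results of Heldt, Knauer and Ueckerdt~\cite{F} on the bend number of $K_{3,n}$: as reported at the end of Section~4 of~\cite{F}, they have computed $b(K_{3,n})$ for all $n$ outside the interval $[40,60]$, and in particular they exhibit values of $n$ for which $b(K_{3,n})=5$. Pick such an $n$ with $n\geqslant 61$, say the smallest value $n_0$ for which~\cite{F} asserts $b(K_{3,n_0})=5$; then $K_{3,n_0}\in B_5$ but $K_{3,n_0}\notin B_4$. The task is then to show $K_{3,n_0}\notin B_5^m$, so that $B_5^m\subsetneqq B_5$ follows together with the trivial inclusion $B_5^m\subseteq B_5$.

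Next I would apply \autoref{mlbl2} with $m=3$ and $k=5$. Here $\left\lceil\frac{k+1}{2}\right\rceil=\left\lfloor\frac{k+1}{2}\right\rfloor=3$, so the term $\left(\left\lceil\frac{k+1}{2}\right\rceil-\left\lfloor\frac{k+1}{2}\right\rfloor\right)^2$ vanishes. Plugging in, \autoref{mlbl2} gives
\begin{align*}
n\left(3-3\right)\leqslant \binom{3}{2}\left(2\cdot 3\cdot 3+5\right)+\tfrac{1}{4}\cdot 9\cdot(1+0),
\end{align*}
i.e.\ $0\leqslant 69.75$, which is unfortunately always true and hence useless. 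So \autoref{mlbl2} does not suffice for $k=5$, $m=3$, and I would instead turn to \autoref{mlbl}. With $m=3$ and $k=5$, \autoref{mlbl} reads
\begin{align*}
n(2\cdot 3-5-2)\leqslant 5\cdot 2\cdot 3+\tfrac{1}{2}\cdot 9+2\cdot 6\cdot 3,
\end{align*}
i.e.\ $-n\leqslant 40.5+54/\ldots$; again the left-hand coefficient $2m-k-2=-1$ is negative, so this is vacuous as well. This tells me that $K_{3,n}$ is the wrong family for $k=5$: the Lower-Bound-Lemmas only bite when $2m-k-2>0$ (for \autoref{mlbl}) or $m-\left\lceil\frac{k+1}{2}\right\rceil>0$ (for \autoref{mlbl2}), i.e.\ when $m$ is comparable to or larger than $k$.

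Therefore I would instead choose $m=4$ and use the results from \autoref{sec:UpperBoundsBendNumberKmn}: by the construction in~\cite{F}, $K_{4,8}\in B_3\subseteq B_5$, but I need a graph that is in $B_5$ and provably \emph{not} in $B_5^m$, so I should take a $K_{4,n}$ with $n$ large enough that \autoref{mlbl2} (with $m=4$, $k=5$, $\left\lceil\frac{k+1}{2}\right\rceil=3$) is violated, yet $n$ small enough that $K_{4,n}\in B_5$ by a construction of~\cite{F} or by \autoref{KmninBmMinus1}. For $m=4$, \autoref{KmninBmMinus1} (or the discussion following it) gives $K_{4,8}\in B_3\subseteq B_5$; more generally the $B_{m-1}$-representation of~\cite{F} and monotonicity constructions show $K_{4,n}\in B_5$ for all $n$ up to some threshold. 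Concretely, with $m=4$, $k=5$, \autoref{mlbl2} requires $n(4-3)\leqslant\binom{4}{2}(2\cdot3\cdot3+5)+\tfrac14\cdot16\cdot1$, i.e.\ $n\leqslant 6\cdot 23+4=142$, so I would want $n\geqslant 143$ together with $K_{4,143}\in B_5$. The main obstacle is exactly this: finding an explicit $n$ and a proof (or citation) that $K_{4,n}\in B_5$ for that $n$ — this requires either a direct monotonic-free construction or carefully reading off an upper bound $b(K_{4,n})\leqslant 5$ from~\cite{F}. Once such an $n$ is fixed, applying \autoref{mlbl2} (or \autoref{mlbl}, whichever is violated first) yields the numerical contradiction, hence $K_{4,n}\notin B_5^m$, hence $B_5^m\subsetneqq B_5$. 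I expect the authors in fact use a $K_{m,n}$ with $m\in\{6,7\}$ here via \autoref{KmninBmMinus1}, making $b(K_{m,n})=m-1$ land exactly at $5$ only for $m=6$; so the cleanest route is $m=6$, $n$ as in \autoref{KmninBmMinus1} (giving $K_{6,n}\in B_5$), combined with \autoref{mlbl} or \autoref{mlbl2} for $k=5$, $m=6$ to rule out $B_5^m$-membership — and verifying that this last inequality is indeed violated is the one computation I would actually carry out.
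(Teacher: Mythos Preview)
Your exploratory reasoning is sound up to the point where you narrow down to $m=4$: you correctly see that $m=3$ is hopeless for $k=5$ and that with $m=4$ one needs $K_{4,n}\in B_5$ for some $n$ beyond the threshold produced by one of the Lower-Bound-Lemmas. The paper does exactly this, but with \autoref{mlbl} rather than \autoref{mlbl2}: for $m=4$, $k=5$ one gets $n(8-5-2)\leqslant 60+8+48$, i.e.\ $n\leqslant 116$, so any $n\geqslant 117$ with $K_{4,n}\in B_5$ suffices. The missing ingredient in your proposal is the specific result that supplies such an $n$: it is Theorem~4.5 of~\cite{F}, which states $K_{m,n}\in B_{2m-3}$ whenever $n\leqslant m^4-2m^3+\tfrac{5}{2}m^2-2m-4$; for $m=4$ this yields $K_{4,156}\in B_5$, and $156>116$ gives the contradiction.

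Your fallback suggestion of taking $m=6$ via \autoref{KmninBmMinus1} does \emph{not} work. For $m=6$ that theorem gives $n=\tfrac{1}{4}\cdot 216-18-6+4=34$, so $K_{6,34}\in B_5$. But \autoref{mlbl} with $m=6$, $k=5$ only forces $5n\leqslant 150+18+72=240$, i.e.\ $n\leqslant 48$, and \autoref{mlbl2} gives $3n\leqslant 15\cdot 23+9=354$, i.e.\ $n\leqslant 118$; in both cases $n=34$ satisfies the inequality, so neither lemma rules out $K_{6,34}\in B_5^m$. Thus the $m=6$ route is a dead end, and the proof genuinely hinges on the $B_{2m-3}$ upper bound from~\cite{F} that you did not locate.
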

\begin{proof}
Since  $B_{5}^{m} \subseteq B_{5}$ obviously holds,  it is enough to show 
$B_{5}^{m} \neq B_{5}$.

Heldt, Knauer, Ueckerdt~\cite{F} showed that $K_{m,n} \in B_{2m-3}$ if 
$n \leqslant m^4 - 2m^3 + \frac{5}{2}m^2 - 2m -4$ (see Theorem~4.5 in \cite{F}). 
For $m = 4$ this implies that $K_{4, 156} \in B_{5}$. Assume that  $K_{4, 156} 
\in B_{5}^{m}$. Then, by \autoref{mlbl} we get
\begin{align*}
&&156(2\cdot 4 - 5 - 2) &\leqslant 5\cdot 3 \cdot 4 + \frac{1}{2}4^2 + 2\cdot 6 
\cdot 4
\end{align*}
 That is, $156\le 116$, a contradiction. So $K_{4,156} \not \in B_{5}^{m}$ but $K_{4,156} \in 
B_{5}$, hence $B_{5}^{m} \neq B_{5}$.
\end{proof}

Finally we show $B_{k}^{m}\subsetneqq B_{k}$ for 
$k \geqslant 7$. To this end, we  use  Lemma~\ref{mlbl}  and
\autoref{KmninBmMinus1}.

\begin{lemma}
\label{bkmneqbkforkgreater7}
 It holds that $B_{k}^{m} \subsetneqq B_{k}$ for $k \geqslant 7$.
\end{lemma}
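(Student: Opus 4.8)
The plan is to mimic the strategy already used for $k=3$ and $k=5$: exhibit, for each $k\geqslant 7$, a complete bipartite graph $K_{m,n}$ that is in $B_k$ by a known construction from~\cite{F}, and then rule out $K_{m,n}\in B_k^m$ using the monotonic Lower-Bound-Lemma~\ref{mlbl}. The cleanest source of a $B_k$-membership is \autoref{KmninBmMinus1}: for suitable $m$ it gives $K_{m,n}\in B_{m-1}$, which after relabelling means that for $k\geqslant 7$ we can write $k=m-1$, i.e.\ $m=k+1$, and obtain $K_{k+1,n}\in B_k$ for the specific $n$ stated in that theorem (distinguishing the parity of $m=k+1$). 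Note \autoref{KmninBmMinus1} is stated for even $m\geqslant 4$ and odd $m\geqslant 7$; since $m=k+1$ with $k\geqslant 7$ gives $m\geqslant 8$, both parity cases are covered, so this is exactly the range where the quoted theorem is unambiguously valid.

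First I would fix $m=k+1$ and let $n$ be as in \autoref{KmninBmMinus1}, so $K_{m,n}\in B_{m-1}=B_k$. Then I would assume for contradiction that $K_{m,n}\in B_k^m$ and invoke \autoref{mlbl} with this $k$ and $m$: since $2m-k-2 = 2(k+1)-k-2 = k$, the inequality reads
\begin{align*}
nk \leqslant k(m-1)m + \tfrac12 m^2 + 2(k+1)m = k\cdot k(k+1) + \tfrac12(k+1)^2 + 2(k+1)^2,
\end{align*}
i.e.\ $n \leqslant k^2(k+1)/k + \text{(lower order)}$, so $n$ is $O(k^3)=O(m^3)$. On the other hand the $n$ supplied by \autoref{KmninBmMinus1} is $\tfrac14 m^3 - \tfrac12 m^2 - m + 4$ (even $m$) or $\tfrac14 m^3 - m^2 + \tfrac34 m$ (odd $m$), which is $\tfrac14 m^3 + O(m^2)$. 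The main computational step is therefore to check that $\tfrac14 m^3 + O(m^2)$ exceeds the right-hand side $k(m-1)m + \tfrac12 m^2 + 2(k+1)m$ with $k=m-1$, which is $(m-1)(m-1)m + \tfrac12 m^2 + 2m\cdot m = m^3 - 2m^2 + m + \tfrac12 m^2 + 2m^2 = m^3 + \tfrac12 m^2 + m$. Wait — that is larger than $\tfrac14 m^3$, so this particular $n$ is \emph{not} large enough; I would instead pick a larger admissible $n$.

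This points to the real approach: rather than using the exact $n$ from \autoref{KmninBmMinus1}, I would use the \emph{monotone growth} of the classes — once $K_{m,n_0}\in B_k$, the construction in~\cite{F} (the $B_{m-1}$-representation is explicitly described there for all $n$ up to the stated bound, and in fact Theorem~4.5 of~\cite{F}, quoted in the proof of \autoref{bkmneqbkfork5}, gives $K_{m,n}\in B_{2m-3}$ for all $n\leqslant m^4-2m^3+\tfrac52 m^2-2m-4$). So the clean choice is $k=2m-3$ (giving $m=(k+3)/2$ for odd $k$) or to keep $m=k+1$ but take $n$ as large as the relevant construction allows — of order $m^4$ — and then \autoref{mlbl} with right-hand side $O(m^3)$ yields the contradiction immediately. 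I expect the main obstacle to be purely bookkeeping: selecting, for each residue class of $k$, a pair $(m,n)$ so that (i) a cited theorem of~\cite{F} certifies $K_{m,n}\in B_k$, and (ii) $n$ strictly exceeds the $O(m^3)$ bound from \autoref{mlbl} with that $k$; then one writes down the resulting numerical inequality (as in the proofs of \autoref{b3m_noteq_b3} and \autoref{bkmneqbkfork5}), observes it is false, and concludes $B_k^m\subsetneqq B_k$ from the trivial inclusion $B_k^m\subseteq B_k$.
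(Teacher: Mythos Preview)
Your initial setup is exactly the paper's approach: take $m=k+1$, use the specific $n$ from \autoref{KmninBmMinus1} to get $K_{m,n}\in B_k$, and then apply \autoref{mlbl} to derive a contradiction. The problem is a bookkeeping slip in the comparison step that makes you abandon this (correct) route.

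After correctly noting that $2m-k-2=k$, the inequality from \autoref{mlbl} reads
\[
nk \;\leqslant\; k(m-1)m + \tfrac12 m^2 + 2(k+1)m.
\]
You then compare $n$ (not $nk$) to the full right-hand side, compute that side as roughly $m^3$, and conclude that $n\approx \tfrac14 m^3$ is too small. But the left-hand side is $nk\approx \tfrac14 m^3\cdot(m-1)\approx \tfrac14 m^4$, which comfortably exceeds the $O(m^3)$ right-hand side. Equivalently, after dividing by $k$ the bound on $n$ is only $O(k^2)$ (you wrote $k^2(k+1)/k$, which is $k(k+1)=O(k^2)$, not $O(k^3)$), and the $n$ from \autoref{KmninBmMinus1} is $\Theta(k^3)$, so the contradiction is immediate for all sufficiently large $k$. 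The paper carries out exactly this computation: for odd $k$ one obtains $k^4 - 3k^3 - 19k^2 - 9k - 10 \leqslant 0$, false for $k\geqslant 7$; for even $k$ one obtains $k^4 - 5k^3 - 16k^2 - 20k - 10 \leqslant 0$, false for $k\geqslant 8$. No appeal to larger $n$ or to Theorem~4.5 of~\cite{F} is needed.
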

\begin{proof}
  We first prove the statement for odd $k$. \autoref{KmninBmMinus1} implies
  that $K_{k+1, \frac{1}{4}(k+1)^3 - \frac{1}{2}(k+1)^2 - (k+1) + 4} = 
K_{k+1, \frac{1}{4}k^3 + \frac{1}{4}k^2 - \frac{5}{4}k + \frac{11}{4}} \in 
B_{k}$ for $k\geqslant 3$. Suppose, in order to derive a contradiction,  that
this graph is in $B_{k}^{m}$. 
Then, by  \autoref{mlbl} with $m = k+1$ and $n =  \frac{1}{4}k^3 + \frac{1}{4}k^2 - 
\frac{5}{4}k + \frac{11}{4}$ it follows that
\begin{align*}
&&\left(\frac{1}{4}k^3 + \frac{1}{4}k^2 - \frac{5}{4}k + 
\frac{11}{4}\right)(2(k+1) - k - 2) &\leqslant k^{2}(k+1) + \frac{1}{2}(k+1)^2 
+\\
&&&\quad \quad \quad \quad 2(k+1)^{2}\\
&\Leftrightarrow &\qquad k\left(\frac{1}{4}k^3 + \frac{1}{4}k^2 - \frac{5}{4}k 
+ \frac{11}{4}\right) &\leqslant k^3 + \frac{7}{2}k^2 + 5k + \frac{5}{2}\\
&\Leftrightarrow & \qquad k^4 - 3k^3 - 19k^2 - 9k - 10 &\leqslant 0,
\end{align*}
which is a contradiction for $k \geqslant 7$. Hence, for odd $k \geqslant 7$ 
there is a graph in $B_{k}$ which is not in $B_{k}^{m}$ and therefore  
$B_{k}^{m} \subsetneqq B_{k}$ holds for odd $k \geqslant 7$.

Now  consider the complementary case of  even $k$. \autoref{KmninBmMinus1} 
implies that the graph $K_{k+1, \frac{1}{4}(k+1)^3 - (k+1)^2 + \frac{3}{4}(k+1)} 
= K_{k+1, \frac{1}{4}k^3 - \frac{1}{4}k^2 - \frac{1}{2}k} \in B_{k}$ for 
$k\geqslant 6$.
Suppose, in order to derive a contradiction,  that this graph is in $B_{k}^{m}$. Then, by \autoref{mlbl} with 
$m = k+1$ and $n =  \frac{1}{4}k^3 - \frac{1}{4}k^2 - \frac{1}{2}k$ we get 
\begin{align*}
&& \left(\frac{1}{4}k^3 - \frac{1}{4}k^2 - \frac{1}{2}k\right)(2(k+1) - k - 2) 
&\leqslant k^{2}(k+1) + \frac{1}{2}(k+1)^2 + \\
&&& \quad \quad \quad \quad 2(k+1)^{2}\\
&\Leftrightarrow & \qquad k\left(\frac{1}{4}k^3 - \frac{1}{4}k^2 - 
\frac{1}{2}k\right) &\leqslant k^3 + \frac{7}{2}k^2 + 5k + \frac{5}{2}\\
&\Leftrightarrow &\qquad k^4 - 5k^3 - 16k^2 - 20k - 10 &\leqslant 0,
\end{align*}
which is a contradiction for $k \geqslant 8$. Hence, for even $k \geqslant 8$ there is 
a graph in $B_{k}$ which is not in $B_{k}^{m}$. Therefore  $B_{k}^{m} 
\subsetneqq B_{k}$ for even $k \geqslant 8$ and this completes the proof. 
\end{proof}

\autoref{b3m_noteq_b3}, \autoref{bkmneqbkfork5} and 
\autoref{bkmneqbkforkgreater7} imply the following theorem. 
\begin{theorem}
\label{thm:RelationshipBkmBk}
It holds that $B_{k}^{m} \subsetneqq B_{k}$ for $k=3$, $k=5$ and 
$k \geqslant 7$.
\end{theorem}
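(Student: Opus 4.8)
The plan is to obtain the theorem simply by collecting the three lemmas already established in this section, namely \autoref{b3m_noteq_b3}, \autoref{bkmneqbkfork5} and \autoref{bkmneqbkforkgreater7}, and then checking that the ranges of $k$ they cover together yield precisely the claimed index set $\{3,5\}\cup\{k:k\geqslant 7\}$. Since the inclusion $B_k^m\subseteq B_k$ holds by definition for every $k$, proving strict inclusion for a given $k$ amounts to exhibiting a single graph lying in $B_k$ but not in $B_k^m$ — and each of the three lemmas does exactly that. So the ``proof'' of the summary theorem is really just bookkeeping.

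Before writing it down, I would remind the reader of the common scheme behind the three lemmas, so that it is transparent why nothing further is needed. In every case the witness graph is a complete bipartite $K_{m,n}$ with a carefully chosen $n$: membership $K_{m,n}\in B_k$ is inherited from the constructive results of Heldt, Knauer and Ueckerdt, either via the explicit small instances $K_{3,36}$ and $K_{4,156}$ or via the family of \autoref{KmninBmMinus1} taken with $m=k+1$; non-membership $K_{m,n}\notin B_k^m$ is obtained by substituting $m$, $n$ and $k$ into \autoref{mlbl} (used for $k=5$ and for $k\geqslant 7$) or into \autoref{mlbl2} (used for $k=3$, where the finer pseudocrossing slack of that bound is what makes the estimate tight enough), and observing that the resulting inequality fails. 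For the family from \autoref{KmninBmMinus1} this reduces to a degree-four polynomial inequality in $k$ which becomes violated once $k$ is large enough.

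Finally I would assemble the ranges: \autoref{b3m_noteq_b3} gives $k=3$, \autoref{bkmneqbkfork5} gives $k=5$, and \autoref{bkmneqbkforkgreater7} gives all odd $k\geqslant 7$ together with all even $k\geqslant 8$; the union of these is exactly $\{3,5\}$ together with every $k\geqslant 7$, which is the statement (the cases $k\in\{4,6\}$ remaining open, consistent with the conjecture in the introduction). The only point needing a moment of care — and, in a summary of this kind, the closest thing to an obstacle — is to confirm that the even branch of \autoref{bkmneqbkforkgreater7} is already valid from $k=8$ on, and hence that, together with the odd branch from $k=7$ on and the isolated cases $k=3,5$, no integer $k\geqslant 7$ is skipped. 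This is precisely the sign analysis of the polynomials $k^4-3k^3-19k^2-9k-10$ and $k^4-5k^3-16k^2-20k-10$ carried out inside that lemma, and nothing beyond it is required here.
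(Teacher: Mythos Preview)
Your proposal is correct and matches the paper's approach exactly: the theorem is stated purely as a summary of \autoref{b3m_noteq_b3}, \autoref{bkmneqbkfork5} and \autoref{bkmneqbkforkgreater7}, with no additional argument. Your expanded commentary on the common scheme and on the coverage of the even/odd branches is accurate but goes beyond what the paper actually writes for this theorem.
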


Summarizing,   in  \autoref{bkmneqbkfork2} and 
\autoref{thm:RelationshipBkmBk}  we have shown that  $B_{k}^{m} 
\subsetneqq B_{k}$,  for $k \in \{2,3,5\}$ and for  
$k \geqslant 7$,  addressing  herewith  a   question  
raised in~\cite{startpaper}. Recall that $B_1^m\subsetneqq B_1$ was 
 already shown in  \cite{E}. Thus, the only open cases are $k=4$ and $k=6$.  
 We conjecture that  $B_{k}^{m} 
\subsetneqq B_{k}$  holds also
for these two  remaining cases.
\begin{conjecture}
    \label{con:Bk46}
$B_{k}^{m} \subsetneqq B_{k}$ holds also for $k=4$ and $k=6$.
\end{conjecture}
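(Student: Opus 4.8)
The plan is to re-run the argument that settled $k\in\{3,5\}$ and $k\geqslant 7$: produce a complete bipartite graph $K_{m,n}$ that lies in $B_{k}$ but violates one of the monotonic Lower-Bound-Lemmas \autoref{mlbl} or \autoref{mlbl2}. For $k=4$ the left-hand coefficient $2m-k-2$ in \autoref{mlbl} is positive only for $m\geqslant 4$, and for $k=6$ only for $m\geqslant 5$; together with the estimate $b(K_{m,n})\geqslant \frac{m}{2}-1$ (which follows at once from Lower-Bound-Lemma~I of~\cite{F}, since $(k+1)(m+n)\geqslant mn$ forces $k+1\geqslant \frac{m}{2}$ when $n\geqslant m$), the search is confined to $m\in\{4,\dots,10\}$ for $k=4$ and $m\in\{5,\dots,14\}$ for $k=6$. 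For each admissible $m$ one reads off from \autoref{mlbl} a threshold $n_{0}(m,k)$ such that $K_{m,n}\notin B_{k}^{m}$ whenever $n\geqslant n_{0}(m,k)$ — for example $n_{0}(4,4)=49$, $n_{0}(5,4)=36$, $n_{0}(6,6)=71$ and $n_{0}(7,6)=63$. The conjecture follows from any construction placing some such $K_{m,n}$ with $n\geqslant n_{0}(m,k)$ into $B_{k}$.

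The reason this has resisted proof is a mismatch between the available constructions and these thresholds. The compact construction behind \autoref{KmninBmMinus1} gives $K_{5,10}\in B_{4}$ and $K_{7,42}\in B_{6}$ directly, but $n=10$ and $n=42$ fall short of $n_{0}(5,4)=36$ and $n_{0}(7,6)=63$; the large-$n$ construction (Theorem~4.5 of~\cite{F}) reaches only the \emph{odd} classes $B_{2m-3}$, so it bears on $B_{4}$ and $B_{6}$ merely through the inclusions $B_{3}\subseteq B_{4}$ and $B_{5}\subseteq B_{6}$, i.e.\ for $m\leqslant 3$ (resp.\ $m\leqslant 4$), exactly the range in which the coefficient $2m-k-2$ is non-positive and the Lower-Bound-Lemma is vacuous. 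Hence the first and principal step is to \textbf{improve the upper bound}: to determine $b(K_{m,n})$ for the small values $m\in\{4,5,6,7\}$ and all $n$ (the excerpt already notes that $b(K_{5,10})\in\{3,4\}$ and $b(K_{3,n})$ for $n\in[40,60]$ are open), and in particular to exhibit an admissible pair with $K_{m,n}\in B_{4}$ (resp.\ $B_{6}$) for some $n\geqslant n_{0}(m,k)$. A concrete target is an extension of the staircase construction of~\cite{C,startpaper} in which the $m$ paths of $\mathcal{P}_{A}$ are permitted to bend non-monotonically; the extra bends increase the intersection capacity precisely when $k$ is even, and one would hope to route, say, $K_{4,n}$ with $n$ well beyond $49$ using only four bends per path.

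A complementary route is to \textbf{sharpen the monotonic Lower-Bound-Lemmas}, driving $n_{0}(m,k)$ down to an $n$-value for which a $B_{k}$-construction already exists. In the proofs of \autoref{mlbl} and \autoref{mlbl2} the alignment, crossing and pseudocrossing counts of $\mathcal{P}_{A}$ are bounded one pair at a time and then summed over all $\binom{m}{2}$ pairs; but in a monotonic representation the segments of the $m$ pairwise non-intersecting paths of $\mathcal{P}_{A}$ inherit a genuine linear order from the coordinates of the grid lines carrying them, so the contributions of different pairs are far from independent. Bounding $a+2c+p$ (or just $c$) globally via this order should yield a strictly smaller right-hand side; combined with the exact values of $b(K_{3,n})$ and $b(K_{4,n})$ from~\cite{F}, this may already settle $k=4$, and similarly $k=6$.

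If neither route closes the gap, the $k=2$ case suggests a fallback: construct a dedicated gadget graph in the role of $H_{1}$, place it in $B_{4}$ (resp.\ $B_{6}$) explicitly, and show by a local case analysis in the spirit of \autoref{graph_not_in_b2m} that every monotonic $B_{4}$- (resp.\ $B_{6}$-) representation forces three pairwise non-adjacent paths whose centre segments cannot all be met by a fourth path. In each of these approaches I expect the upper-bound step to be the real obstacle: the monotonic lower bounds are already of the right order of magnitude — their thresholds are $\Theta(m^{3})$, matching \autoref{ifTooLargeKmnIsNotInB2mMinus3m} — so all the slack lies in producing a sufficiently dense non-monotonic $4$- or $6$-bend representation, and making the parity of $k$ cooperate with staircase-type constructions is precisely the point at which the proof for the remaining values of $k$ stalled.
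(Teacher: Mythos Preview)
Your proposal is not a proof, and you are candid about this: you outline the natural strategy, explain precisely where it breaks down (the mismatch between the available $B_k$-constructions for $K_{m,n}$ and the thresholds $n_0(m,k)$ coming from \autoref{mlbl}), and sketch several avenues that might close the gap. That is an accurate assessment, but it is important to recognise that the paper does not prove this statement either. It is stated as \autoref{con:Bk46}, a conjecture, and the text immediately following it says explicitly that ``the relationship for these values of $k$ remains an open question.'' So there is no ``paper's own proof'' to compare against.

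What can be compared is your diagnosis against the paper's own discussion of the obstruction, and here the two agree essentially verbatim. In the Conclusions the authors write that to settle $k=4$ via \autoref{mlbl} it would suffice to show $K_{4,49}\in B_4$ or $K_{5,36}\in B_4$, and for $k=6$ that one of $K_{5,102}$, $K_{6,71}$, $K_{7,63}$ lies in $B_6$; these are exactly your thresholds $n_0(4,4)=49$, $n_0(5,4)=36$, $n_0(6,6)=71$, $n_0(7,6)=63$ (you omit $n_0(5,6)=102$, but it arises the same way). Your observation that the large-$n$ construction of~\cite{F} only reaches the odd classes $B_{2m-3}$, and that \autoref{KmninBmMinus1} gives $n$-values too small, is precisely why the paper leaves these two cases open. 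Your suggested remedies---tighter global bounds on $a+2c+p$ exploiting the linear order on segments of monotonic paths, or an $H_1$-style gadget---go beyond what the paper discusses, but they remain speculative.

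In short: there is no gap to name in your argument because you do not claim to have one; you have correctly identified that the statement is open and correctly located the bottleneck, matching the paper's own account.
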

However, this remains an open  question.

\section{Relationship between \texorpdfstring{$B_{k}$}{Bk} and \texorpdfstring{$B_{\ell}^{m}$}{Blm} for \texorpdfstring{$\ell > k$}{l > k}}
\label{sec:BkBlm}

Recall that the inclusions  chains $B_i \subseteq B_{i+1}$  and
    $B^m_i \subseteq    B^m_{i+1}$
 trivially hold for all $i\in \nz$. 
In other words the size of the classes of graphs that have a (monotonic) 
$k$-bend
EPG representation increase with increasing $k$. Also the relationships
$B_0=B_0^m$ and $B_0\subseteq B_1^m$ are trivial. Moreover,
$B_{k}^{m} \subsetneqq B_{k}$ holds for almost all $k \in \nz$,  as shown in
Section~\ref{allBkBkm}. 

This means that in
general the minimum number of bends needed for an EPG representation of a graph
increases when the representing paths on the grid are required to be monotonic.
Analogously, in general  the minimum number of bends needed for an EPG representation of a graph
 decreases as compared to  the minimum number of bends needed in a
monotonic EPG representation. 
Quantifying the magnitude of such an  increase (decrease) arises as a natural question  in
this context. More
generally,  it would be interesting to investigate  the existence of non-trivial functions $f, g\colon \nz\to \nz$
 such that $b^m(G)\leqslant f(b(G))$ and $b(G)\leqslant g(b^m(G))$ holds for 
 all graphs  $G$, or only for all  $G$ belonging to some particular class of graphs.

 To the best of our knowledge questions of this kind   have not  been 
addressed in the literature so far.
In this
section we present some  related results.
In particular, in Section~\ref{sec:BkBlmWith2kMinus9} we show that
the increase (decrease) of the number of bends as
mentioned above  cannot be bounded by one,  in general.
More precisely, by combining the results of  \autoref{KmnInBkNotInB2kMinus89} and
\autoref{thm:KmnInB2mMinus2} with some result known in the literature we show 
that none of the inclusions  $B_{k} \subseteq B_{k+1}^{m}$,   $B_{k+1}^{m}
\subseteq B_{k}$ holds, a result not known so far in the literature.
Then in Section~\ref{sec:B1B3m} we  show that $B_1\subseteq B_3^m$ holds.

\subsection{Relationship between \texorpdfstring{$B_{k}$}{Bk} and 
\texorpdfstring{$B_{2k-9}^{m}$}{B2kMinus9m}}
\label{sec:BkBlmWith2kMinus9}

\begin{theorem}
\label{KmnInBkNotInB2kMinus89}
Let $k \geqslant 5$. If $k$ is odd, then there is a graph which is in $B_{k}$ 
but not in $B_{2k-8}^{m}$. If $k$ is even, there is a graph which is in $B_{k}$ 
but not in $B_{2k-9}^{m}$.
\end{theorem}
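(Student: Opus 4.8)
The plan is to exhibit, for each $k \geqslant 5$, a complete bipartite graph $K_{m,n}$ that lies in $B_k$ by an explicit (non-monotonic) construction from~\cite{F}, but whose parameters $m$, $n$ violate the inequalities of \autoref{mlbl} or \autoref{mlbl2} when $k$ is replaced by $2k-8$ (odd case) or $2k-9$ (even case). This mirrors exactly the strategy used in \autoref{bkmneqbkfork5} and \autoref{bkmneqbkforkgreater7}: pick a $K_{m,n}$ that is provably in some $B_{k'}$ via a known constructive upper bound, then show it cannot be in $B_{\ell}^m$ for the relevant $\ell$ by plugging into the monotonic Lower-Bound-Lemma. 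The difference here is that the monotonic bend budget $\ell \approx 2k$ is roughly twice the true bend number $k$, so the gap we need to certify is large, and we must choose $n$ as large as the $B_k$-membership result permits in order to overwhelm the right-hand side of the monotonic inequality.

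Concretely, I would first fix the relation between the partition size and $k$. Since \cite{F} gives $K_{m,n} \in B_{2m-3}$ whenever $n \leqslant m^4 - 2m^3 + \tfrac52 m^2 - 2m - 4$ (the bound used in \autoref{bkmneqbkfork5}), setting $2m-3 = k$, i.e.\ $m = \tfrac{k+3}{2}$, works for odd $k$ and places $K_{m,n} \in B_k$ for $n$ as large as $\Theta(m^4) = \Theta(k^4)$. For even $k$ one cannot take $m = \tfrac{k+3}{2}$, so I would instead take $m = \tfrac{k+2}{2} = \tfrac{k}{2}+1$, giving $2m-3 = k-1$, hence $K_{m,n} \in B_{k-1} \subseteq B_k$ for $n$ up to $\Theta(m^4)$; this loss of one bend in the construction is precisely why the statement for even $k$ only rules out $B_{2k-9}^m$ rather than $B_{2k-8}^m$. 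In both cases choose $n$ equal to (or just below) the maximum value allowed by the $B_{2m-3}$-result.

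Next I would assume for contradiction that the chosen $K_{m,n}$ is in $B_{\ell}^m$ with $\ell = 2k-8$ (odd $k$) or $\ell = 2k-9$ (even $k$), and apply \autoref{mlbl}. In terms of $m$ and $\ell$ one has $\ell = 2k-8 = 2(2m-3)-8 = 4m-14$ in the odd case (so $2m - \ell - 2 = 12 - 2m$), and similarly $\ell = 2k-9 = 2(2m-2)-9 = 4m-13$ in the even case; in either case $2m - \ell - 2$ is a fixed negative-or-small constant in $m$, so \autoref{mlbl} forces $n$ to be bounded by a cubic in $m$ (roughly $n \leqslant \ell(m-1)m + \tfrac12 m^2 + 2(\ell+1)m = O(m^3)$), whereas we chose $n = \Theta(m^4)$. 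For $m$ (equivalently $k$) large enough this is a contradiction; the remaining work is to verify that "large enough" actually means $k \geqslant 5$ (odd) and $k \geqslant 6$ (even), which is a finite check plus one polynomial inequality in $k$, done exactly as in the proof of \autoref{bkmneqbkforkgreater7}. One should double-check the small even case $k=6$ (where $\ell = 3$) and the small odd case $k=5$ (where $\ell = 2$) directly, since these are near the boundary where the cubic bound might not yet beat the quartic choice of $n$; if a clean $K_{m,n}$ does not work there, an alternative is to use \autoref{mlbl2} instead, whose right-hand side is also only $O(m^2 k^2)$ while the left side is linear in $n$.

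The main obstacle I anticipate is bookkeeping around the parity split and making sure the chosen $n$ is simultaneously (i) small enough for the constructive $B_k$-membership theorem of~\cite{F} to apply and (ii) large enough to violate the monotonic inequality for $\ell = 2k-8$ or $2k-9$; the quartic-versus-cubic growth gives plenty of room asymptotically, so the real care is in the edge cases $k \in \{5,6,7\}$. A secondary subtlety is justifying the "$-1$" loss in the even case cleanly — i.e.\ explaining why we settle for $2k-9$ rather than $2k-8$ — which comes purely from the fact that $2m-3$ is odd, so for even $k$ the best parity-matched construction reaches only $B_{k-1}$ rather than $B_k$ on a $\Theta(m^4)$-sized second partition class. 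Everything else is routine substitution into \autoref{mlbl}.
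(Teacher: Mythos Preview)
Your approach has a genuine and fatal gap: the choice $m=\tfrac{k+3}{2}$ (odd $k$) makes the monotonic Lower-Bound-Lemma vacuous rather than contradictory. With $\ell=2k-8=4m-14$, the coefficient on the left of \autoref{mlbl} is
\[
2m-\ell-2 \;=\; 2m-(4m-14)-2 \;=\; 12-2m,
\]
which is $\leqslant 0$ as soon as $m\geqslant 6$, i.e.\ $k\geqslant 9$. For such $k$ the inequality in \autoref{mlbl} is then trivially satisfied for every $n$, so no contradiction is possible. The same problem hits \autoref{mlbl2}: the left-hand coefficient there is $m-\lceil\tfrac{\ell+1}{2}\rceil=m-(2m-6)=6-m$, again eventually non-positive. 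Your remark that ``$2m-\ell-2$ is a fixed negative-or-small constant in $m$'' is precisely the issue --- once it is non-positive, the lemma says nothing.

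In fact your candidate graph is \emph{provably in} $B_{2k-8}^m$ for $k\geqslant 9$: by \autoref{thm:KmnInB2mMinus2} every $K_{m,n}$ lies in $B_{2m-2}^m$, and with $m=\tfrac{k+3}{2}$ one has $2m-2=k+1\leqslant 2k-8$. So the strategy cannot work even in principle for such $k$. The even case fails identically.

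The paper avoids this by taking $m=k+1$ rather than $m\approx k/2$, and using \autoref{KmninBmMinus1} (the $B_{m-1}$ construction, giving $n=\Theta(k^3)$) instead of the $B_{2m-3}$ construction. Then the coefficient $2m-\ell-2$ equals the positive constant $8$ (odd $k$) or $9$ (even $k$), and \autoref{mlbl} bounds $n$ by a cubic in $k$ with a strictly smaller leading constant than the one allowed by \autoref{KmninBmMinus1}. The parity split in the statement then comes not from whether $2m-3$ matches $k$, but from the two different formulas for $n$ in \autoref{KmninBmMinus1} according to the parity of $m=k+1$.
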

\begin{proof}
Consider first the case where  $k$ is odd. In this case 
\autoref{KmninBmMinus1} implies  that $G_k:=K_{k+1, \frac{1}{4}(k+1)^3 - 
\frac{1}{2}(k+1)^2 - (k+1) + 4} = K_{k+1, \frac{1}{4}k^3 + \frac{1}{4}k^2 - 
\frac{5}{4}k + \frac{11}{4}}$ is in $B_{k}$ for $k\geqslant 3$.

Assume that 
$G_k$ belongs to $B_{2k-8}^{m}$ for $k\geqslant 5$. Then, 
\autoref{mlbl} implies 
\begin{align*}
&&\left(\frac{1}{4}k^3 + \frac{1}{4}k^2 - \frac{5}{4}k + \frac{11}{4}\right)8 
&\leqslant (2k-8)(k+1)k +  \frac{1}{2}(k+1)^2 + \\
&&& \quad \quad \quad \quad 2(2k-7)(k+1)\\
&\Leftrightarrow &\qquad 8\left(\frac{1}{4}k^3 + \frac{1}{4}k^2 - \frac{5}{4}k 
+ \frac{11}{4}\right) &\leqslant 2k^3 - \frac{3}{2}k^2 - 17k - \frac{27}{2}\\
&\Leftrightarrow & \qquad  7k^2 + 14k + 71 &\leqslant 0\, 
,
\end{align*}
 which is  a contradiction for $k\geqslant 0$. 
So $G_k$ is not in $B_{2k-8}^{m}$. Hence, for odd $k \geqslant 5$, there is 
a graph in $B_{k}$ which is not in $B_{2k-8}^{m}$.

Consider now the case where $k$ is even.  \autoref{KmninBmMinus1} implies  
$G_k':=K_{k+1, \frac{1}{4}(k+1)^3 - (k+1)^2 + 
\frac{3}{4}(k+1)} = K_{k+1, \frac{1}{4}k^3 - \frac{1}{4}k^2 - \frac{1}{2}k} \in 
B_{k}$ for $k\geqslant 6$. If we assume that $G_k'$  is  in 
$B_{2k-9}^{m}$ for $k\geqslant 6$, we obtain the following inequality  by
applying  \autoref{mlbl}
\begin{align*}
&& \left(\frac{1}{4}k^3 - \frac{1}{4}k^2 - \frac{1}{2}k\right)9 &\leqslant 
(2k-9)(k+1)k +  \frac{1}{2}(k+1)^2 +\\
&&&\quad \quad \quad \quad  2(2k-8)(k+1)\\
&\Leftrightarrow & \qquad 9\left(\frac{1}{4}k^3 - \frac{1}{4}k^2 - 
\frac{1}{2}k\right) &\leqslant 2k^3 - \frac{5}{2}k^2 - 20k - \frac{31}{2}\\
&\Leftrightarrow &\qquad  k^3 + k^2 + 62k + 62 &\leqslant 0\, , 
\end{align*}
 which  is a contradiction for $k \geqslant 0$. Hence,   
 $G_k'$ is in $B_{k}$ but not in 
$B_{2k-9}^{m}$ for even $k \geqslant 6$.
\end{proof}

\autoref{KmnInBkNotInB2kMinus89} reveals that $B_{k} \not 
\subseteq B_{2k-8}^{m}$ for odd $k \geqslant 5$ and that $B_{k} \not \subseteq 
B_{2k-9}^{m}$ for even $k \geqslant 5$.  Thus,   restricting 
the paths of the EPG representation to be monotonic   is  a significant
limitation. \autoref{KmnInBkNotInB2kMinus89}   clearly implies  that $B_{k} \subseteq B_{k+1}^{m}$ does 
 not hold in general.

We can also settle the  
question whether  $B_{k+1}^{m}\subseteq
B_{k}$ holds in general. 
 Indeed, in~\cite{F} it  was proven 
 that  $b(K_{m,n})=2m-2$ for  $m\geqslant 3$ and $n \geqslant
 m^4 - 2m^3 + 5m^2 - 4m + 1$. Hence,  in particular,
 $K_{m, m^4 - 2m^3 + 5m^2 - 4m + 1}$ is in $B_{2m-2}$, but it is not in 
 $B_{2m-3}$.
 On the other hand, \autoref{thm:KmnInB2mMinus2} implies that
 $K_{m, m^4 - 2m^3 + 5m^2 - 4m + 1}$ is in $B_{2m-2}^m$, so
 $B_{2m-2}^m \not \subseteq B_{2m-3}$ for all $m \geqslant 3$.   
Thus,  $B_{k+1}^{m}\subseteq B_{k}$ does not hold in general.

\subsection{Relationship between \texorpdfstring{$B_{1}$}{B1} and \texorpdfstring{$B_{3}^{m}$}{B3m}}
\label{sec:B1B3m}

As mentioned at the beginning of \autoref{sec:BkBlm},  in
general the minimum number of bends needed for an EPG representation of a graph
increases when the paths on the grid are required to be
monotonic. In order  to quantify the amount of this increase  we would like
to find the 
minimum $\ell$ such that $B_{k} \subseteq B_{\ell}^{m}$. 
\autoref{KmnInBkNotInB2kMinus89} shows that $2k-9$ is a lower bound for $\ell$,
i.e.\ 
 $\ell \geqslant 
 2k-9$ for $k \geqslant 5$.
 
 In the following we focus on small values of $k$.  Since  $B_{0}
 = B_{0}^{m}$ holds,   
$1$ is the smallest value of $k$ for which $\ell$ and/or bounds on it are not known.
In the following we show that $B_{1} \subseteq
B_{3}^{m}$, i.e.\  $3$ is an upper bound on  the minimum value of $\ell$ for
which $B_1\subseteq B_\ell^m$.

\begin{theorem}
\label{B1SubseteqB3m}
The inclusion $B_{1} \subseteq B_{3}^{m}$ holds.
\end{theorem}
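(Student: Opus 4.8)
The plan is to start from an arbitrary $B_1$-EPG representation of a graph $G$ and transform each path into a monotonic path with at most three bends, while preserving the edge-intersection pattern. A $B_1$-path on the grid has one of four shapes — depending on which of the four "corner" orientations it takes — and it is convenient to classify the paths by these shapes. After possibly reflecting the whole representation (which preserves all edge intersections), I would assume without loss of generality that every path is either a monotone (ascending) path already, or a path of the "opposite" corner type, i.e.\ a path that goes, say, down-then-right or right-then-down when monotone paths go up-then-right. The key observation is that a non-monotone $1$-bend path can be made monotone at the cost of extra bends if we are allowed to "reroute" it through nearby grid lines, and the whole point is to do this rerouting consistently so that intersections are neither created nor destroyed.

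The concrete approach I would take is a \emph{coordinate-doubling / shearing} construction. First I would refine the grid: replace each grid line by a small bundle of closely spaced parallel grid lines (say, triple each row and each column), so that a segment that used to lie on a single line can be "split" into a few segments lying on the new nearby lines, and two segments that used to share a grid edge can be made to share a grid edge on a designated line of the bundle. This gives room to reroute. Then, for each non-monotone path, I would replace its single bend corner by a short staircase detour: the down-then-right path becomes, e.g., up-then-right-then-down-then-right (three bends, monotone in the doubled sense if we also relabel which direction counts as "ascending") — the standard trick being that a path which is decreasing in one coordinate can be simulated by a monotone path that makes a small extra "jog". The heart of the argument is a careful bookkeeping lemma: I would show that for each pair of originally intersecting paths there is a consistent choice of which sub-line of each refined bundle carries the shared edge, and for each pair of originally non-intersecting paths the reroutings keep them disjoint. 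This is most cleanly done by processing the intersections one grid line (of the original grid) at a time, since on a fixed original grid line the overlap structure of the $1$-bend segments lying on it is an interval structure, and interval structures are easy to realize monotonically after refinement.

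The hard part will be exactly this consistency bookkeeping — guaranteeing simultaneously that no new intersections are introduced between non-adjacent vertices and that all old intersections survive, because rerouting one path forces constraints on neighbouring paths, and these constraints must be shown to be globally satisfiable. I expect the cleanest route is to fix, for every original vertical grid line, a total order on the set of (sub)paths that currently use it, and similarly for horizontal lines, and then argue that three bends always suffice to route each path monotonically respecting both its assigned vertical slot and its assigned horizontal slot while passing through precisely the prescribed shared edges. A secondary technical point to handle is the case analysis on the four $1$-bend shapes: after one global reflection only two shape classes remain (monotone and "anti-monotone"), so only the anti-monotone paths need rerouting, which keeps the case analysis small. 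Once the construction is described, checking that each produced path has at most $3$ bends and is ascending in both coordinates is a short verification, and the fact that edge intersections are preserved follows from the slot-assignment invariants established during the construction.
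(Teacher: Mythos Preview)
Your proposal has genuine gaps and, as written, does not constitute a proof.

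First, two concrete errors. The claim that a single global reflection reduces the four $1$-bend shapes to two (``monotone'' and ``anti-monotone'') is false: a reflection permutes the four corner types but does not collapse them, so after reflection you still have, in general, all four shapes present. More seriously, your example reroute ``up-then-right-then-down-then-right'' is not monotone at all --- it decreases in the $y$-coordinate on its third segment --- and the parenthetical fix ``monotone in the doubled sense if we also relabel which direction counts as ascending'' is not available, since monotonicity must hold globally with a single orientation for all paths simultaneously.

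The deeper problem is structural. Your plan is to reroute each non-monotone path \emph{in place}, near its original location, using refined grid lines. But a non-monotone $1$-bend path may have its horizontal intersections geometrically above its vertical intersections (or to the right of them), and no monotone path can visit the same grid-edge neighbourhoods in that order. Local rerouting therefore forces you to move the witnesses of some intersections elsewhere, and this is exactly the ``consistency bookkeeping'' you flag as the hard part and then leave unresolved. Without an argument that the slot assignments are globally realizable, the proof is incomplete; and the obstruction above suggests they are not realizable by purely local changes.

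The paper's proof avoids this entirely by a global trick you are missing: take \emph{two} copies $R_1,R_2$ of the given $B_1$-representation, placing $R_2$ strictly to the upper right of $R_1$. For each vertex $v$, the new path $Q_v$ uses the horizontal segment of $P_v$ as it sits in $R_1$, then travels along a private new vertical line $L_v^{|}$ and a private new horizontal line $L_v^{-}$ (one such pair per vertex) up and to the right into $R_2$, and finishes with the vertical segment of $P_v$ as it sits in $R_2$. Every $Q_v$ is monotone with at most three bends; horizontal intersections of the original representation survive in $R_1$, vertical intersections survive in $R_2$, and the connecting middle segments lie on lines used by no other path, so no spurious intersections are created. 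There is no case analysis on corner types and no slot-assignment problem. If you want to repair your approach, the missing idea is precisely this separation of horizontal and vertical intersections into two disjoint regions connected by private channels.
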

\begin{proof}
 Let $G$ be
a graph in $B_{1}$. We show that $G$ is in $B_3^m$ by presenting  a monotonic
$B_3$-EPG representation of $G$. The latter is constructed by transforming a
$B_1$-EPG representation of $G$ into a  $B_3^m$-EPG representation of $G$ as
described below.  The transformation is illustrated by means of an example;
Figures~\ref{fig:B1SubseteqB3mGraphB1}(a) 
and~\ref{fig:B1SubseteqB3mGraphB1}(b)
show a graph $G$ and a $B_1$-EPG representation of it, respectively, whereas 
\autoref{fig:B1SubseteqB3mB3m} shows the corresponding 
$B_{3}^m$-EPG representation obtained as a result of the transformation
mentioned above. 

Let 
$R$ be an arbitrary $B_{1}$-EPG representation of $G$. We place another 
copy of the same $B_{1}$-EPG representation to the top right of $R$, 
see~\autoref{fig:B1SubseteqB3mB1Double},  and
then step by step modify both the original $B_1$-EPG representation and its copy   as
described below. At any point in time during this modification process we denote
by 
 $R_1$  and $R_2$ the current  modified  $B_1$-EPG representation and the 
 current
 modified copy of the original  $B_1$-EPG representation, respectively.
 For a vertex $v$ of $G$ we denote by $P_v$, $P_v^1$ and $P_v^2$ the path 
 corresponding to $v$ in $R$, $R_1$ and $R_2$, respectively. 
 At the beginning of the modification process $R_1$ and $R_2$ coincide with the
 original $B_1$-EPG representation and its copy, respectively, as in
 \autoref{fig:B1SubseteqB3mB1Double}.

\begin{figure}[ht]
\centering
\begin{minipage}[b]{0.45\linewidth}
\centering
\begin{center}
\begin{tikzpicture}
  [scale=.7]

  \node[vertex] (c) at (1,1) {$c$};
  \node[vertex] (a) at (3,1) {$a$};
  \node[vertex] (b) at (2,2.7) {$b$};
  \node[vertex] (d) at (4,2.7) {$d$};
  \node[vertex] (e) at (0,2.7) {$e$};
  \node[vertex] (f) at (2,-0.7) {$f$};
  \node[vertex] (g) at (5.7,3.8) {$g$};

  \foreach \from/\to in {a/b,a/c,a/d,a/f,a/g,b/c,b/e,b/d,c/e,c/f,g/e,g/d}
  \draw (\from) -- (\to);

\end{tikzpicture}
\end{center}
(a)
\end{minipage}
\quad
\begin{minipage}[b]{0.45\linewidth}
\centering
\begin{center}
\begin{tikzpicture}
  [scale=.7]

\def \x {0}
\def \y {0}

  \draw (1+\x,3+\y) -- (6+\x,3+\y) -- (6+\x,4+\y) node[left,pos=0.5] {$P_{a}$};

  \draw (3+\x,5+\y) -- (3+\x,3.15+\y) -- (5+\x,3.15+\y) node[above,pos=0.25] {$P_{b}$};

  \draw (4+\x,2.85+\y) -- (6+\x,2.85+\y) -- (6+\x,2+\y) node[left,pos=0.5] {$P_{d}$};

  \draw (1+\x,3.15+\y) -- (2.85+\x,3.15+\y) -- (2.85+\x,5+\y) node[left,pos=0.2] {$P_{c}$};

  \draw (1+\x,2.85+\y) -- (2+\x,2.85+\y) node [below,pos=0.5] {$P_{f}$};
  
  \draw (6.15+\x,2+\y) -- (6.15+\x,4.85+\y) node [right,pos=0.5] {$P_{g}$} -- (5+\x,4.85);

  \draw (3.15+\x,4+\y) -- (3.15+\x,5+\y) node [right,pos=0.5] {$P_{e}$} -- (6,5);

\end{tikzpicture}
\end{center}
(b)
\end{minipage}
\caption{(a) A graph $G$. (b) The $B_{1}$-EPG representation $R$ of $G$.}
\label{fig:B1SubseteqB3mGraphB1}
\end{figure}
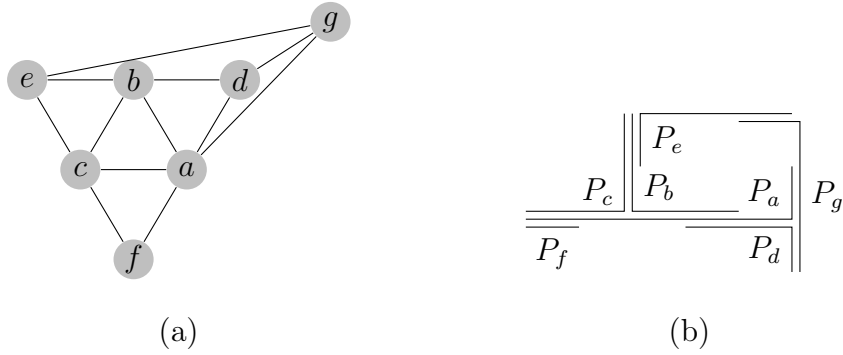

\begin{figure}[ht]
\begin{center}
\begin{tikzpicture}
  [scale=.7]

\draw[step=1cm,gray!25,line width=5pt] (.7,1.7) grid (13.3,9.3);

\foreach \x/\y\n in {0/0/1,7/4/2}
{

  \draw (1+\x,3+\y) -- (6+\x,3+\y) -- (6+\x,4+\y) node[left,pos=0.5] 
  {$P_{a}^\n$};

  \draw (3+\x,5+\y) -- (3+\x,3.15+\y) -- (5+\x,3.15+\y) node[above,pos=0.25] 
  {$P_{b}^\n$};

  \draw (4+\x,2.85+\y) -- (6+\x,2.85+\y) -- (6+\x,2+\y) node[left,pos=0.5] 
  {$P_{d}^\n$};

  \draw (1+\x,3.15+\y) -- (2.85+\x,3.15+\y) -- (2.85+\x,5+\y) 
  node[left,pos=0.2] {$P_{c}^\n$};

  \draw (1+\x,2.85+\y) -- (2+\x,2.85+\y) node [below,pos=0.5] {$P_{f}^\n$};
  
  \draw (6.15+\x,2+\y) -- (6.15+\x,4.85+\y) node [right,pos=0.5] {$P_{g}^\n$} 
  -- (5+\x,4.85+\y);

  \draw (3.15+\x,4+\y) -- (3.15+\x,5+\y) node [right,pos=0.5] {$P_{e}^\n$} -- 
  (6+\x,5+\y);

}

\end{tikzpicture}
\end{center}
\caption{The grid with the two copies $R_1$ and $R_2$ of $R$ shown in
  \autoref{fig:B1SubseteqB3mGraphB1}(b).}
\label{fig:B1SubseteqB3mB1Double}
\end{figure}
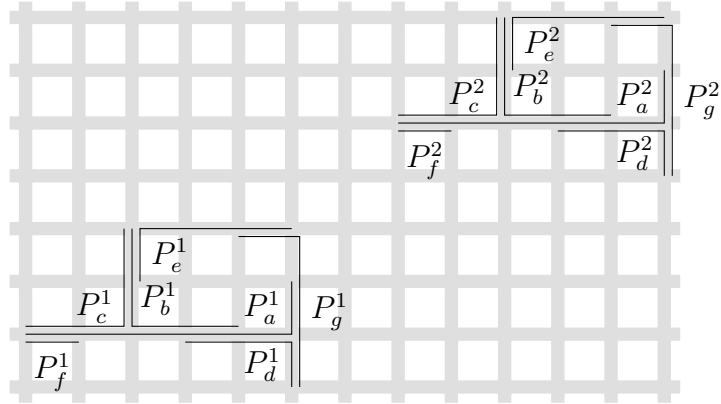

Now consider the vertices  of $G$ one by one in an  arbitrary order and for 
every vertex perform the modifications  described below.
Let $v$ be the currently considered vertex.   The modification of $R_1$ is driven by the
horizontal segment of  the path $P_v$, if any,  whereas the modification of $R_2$ is driven by the
vertical segment of $P_v$, if any.  
If $P_{v}$ has a horizontal segment, we modify $R_1$ as follows. We introduce a new   vertical grid 
line  $L^{|}_{v}$ directly  to the  left of  the vertical grid 
line containing the right end point of the horizontal segment of  $P_{v}^1$ in $R_1$ 
and shorten the horizontal segment of $P_v^1$ to end in $L^{|}_{v}$ instead of  
ending at the original right end point.
Then,  if  the path $P_v$ contains a vertical segment which 
starts at
the original right end point of the horizontal segment mentioned above, we 
modify   
$P_v^1$  in $R_1$ by shifting its vertical segment to lie on  $L^{|}_{v}$. 

If $P_{v}$ has a vertical segment, we modify $R_2$ as follows.
We introduce a new   horizontal grid  line  $L^{-}_{v}$ directly  beneath   the horizontal grid 
line containing the lower end point of the vertical  segment of 
$P_{v}^2$ in $R_2$ and extend the vertical segment of $P_v^2$ until $L^{-}_{v}$.
Then,  if  the path $P_v$ contains a horizontal  segment 
which starts at
the original lower end point of the vertical  segment mentioned above, we 
modify   
$P_v^2$  in $R_2$ by shifting its horizontal segment to lie on  $L^{-}_{v}$. 
An example of the modified grid and paths and the final $R_1$, $R_2$
for the graph in  
\autoref{fig:B1SubseteqB3mGraphB1}~(a)   can be seen in 
\autoref{fig:B1SubseteqB3mNewGridlines}.

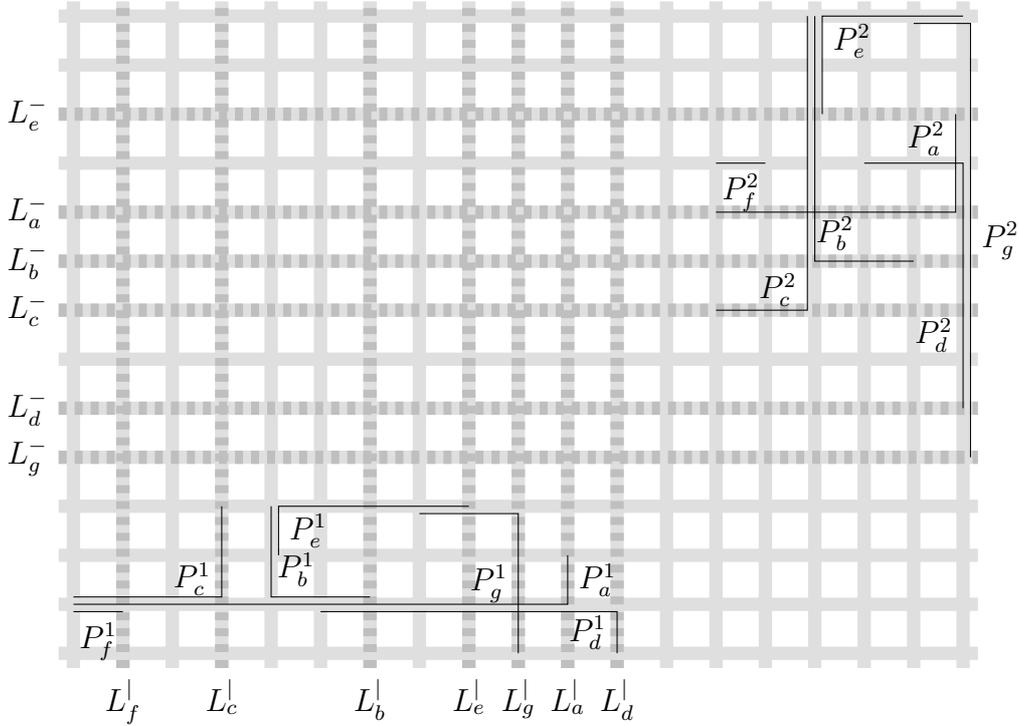
\begin{figure}[ht]
\begin{center}
\begin{tikzpicture}
  [scale=.65]

\def \gridUp {15.3}
\def \gridDown {1.7}
\def \gridLeft {0.7}
\def \gridRight {19.3}

\draw[step=1cm,gray!25,line width=5pt] (\gridLeft,\gridDown) grid (\gridRight,\gridUp);

\def \x {0}
\def \y {0}
  
  \draw[gray!50,dashed,line width = 5pt] (2,\gridDown) -- (2,\gridUp);  
  
      \draw[gray!50,dashed,line width = 5pt] (4,\gridDown) -- (4,\gridUp);  
    
  \draw[gray!50,dashed,line width = 5pt] (7,\gridDown) -- (7,\gridUp);  
          
  \draw[gray!50,dashed,line width = 5pt] (9,\gridDown) -- (9,\gridUp);  
  
      \draw[gray!50,dashed,line width = 5pt] (10,\gridDown) -- (10,\gridUp); 

    \draw[gray!50,dashed,line width = 5pt] (11,\gridDown) -- (11,\gridUp); 
    
      \draw[gray!50,dashed,line width = 5pt] (12,\gridDown) -- (12,\gridUp);

\def \x {13}
\def \y {6}

   \draw[gray!50,dashed,line width = 5pt] (\gridLeft,\y) -- (\gridRight,\y);   
   
   \draw[gray!50,dashed,line width = 5pt] (\gridLeft,\y+1) -- (\gridRight,\y+1); 
   
   \draw[gray!50,dashed,line width = 5pt] (\gridLeft,\y+3) -- (\gridRight,\y+3);
   
  \draw[gray!50,dashed,line width = 5pt] (\gridLeft,\y+4) -- (\gridRight,\y+4); 
  
  \draw[gray!50,dashed,line width = 5pt] (\gridLeft,\y+5) -- (\gridRight,\y+5);      
                   
   \draw[gray!50,dashed,line width = 5pt] (\gridLeft,\y+7) -- (\gridRight,\y+7);

\def \x {0}
\def \y {0}
  
  \draw (1+\x,2.85+\y) -- (2+\x,2.85+\y) node [below,pos=0.5] {$P_{f}^1$};
  
    \draw (1+\x,3.15+\y) -- (3+\x+1,3.15+\y) -- (3+\x+1,5+\y) 
    node[left,pos=0.2] {$P_{c}^1$};
    
  \draw (3+\x+2,5+\y) -- (3+\x+2,3.15+\y) -- (5+\x+2,3.15+\y) 
  node[above,pos=0.25] {$P_{b}^1$};
  
        \draw (5.15+\x,4+\y) -- (5.15+\x,5+\y) node [right,pos=0.5] {$P_{e}^1$} 
        -- (9+\x,5+\y);
        
  \draw (10+\x,2+\y) -- (10+\x,4.85+\y) node [left,pos=0.5] {$P_{g}^1$} -- 
  (8+\x,4.85+\y);        

  \draw (1+\x,3+\y) -- (11+\x,3+\y) -- (11+\x,4+\y) node[right,pos=0.5] 
  {$P_{a}^1$};  
  
  \draw (6+\x,2.85+\y) -- (12+\x,2.85+\y) -- (12+\x,2+\y) node[left,pos=0.5] 
  {$P_{d}^1$};

\def \x {13}
\def \y {6}

  \draw (4+\x,3+\y+3) -- (6+\x,3+\y+3) -- (6+\x,1+\y) node[left,pos=0.7] 
  {$P_{d}^2$};
  
  \draw (6.15+\x,\y) -- (6.15+\x,4.85+\y+4) node [right,pos=0.5] {$P_{g}^2$} 
  -- (5+\x,4.85+\y+4);      
  
    \draw (1+\x,3+\y) -- (2.85+\x,3+\y) -- (2.85+\x,5+\y +4) 
    node[left,pos=0.07] {$P_{c}^2$};
    
  \draw (3+\x,5+\y+4) -- (3+\x,3+\y+1) -- (5+\x,3+\y+1) node[above,pos=0.2] 
  {$P_{b}^2$};
  
  \draw (1+\x,3+\y+2) -- (5.85+\x,3+\y+2) -- (5.85+\x,4+\y+3) 
  node[left,pos=0.75] {$P_{a}^2$};   

    \draw (1+\x,3+\y+3) -- (2+\x,3+\y+3) node [below,pos=0.5] {$P_{f}^2$};
    
  \draw (3.15+\x,4+\y+3) -- (3.15+\x,5+\y+4) node [right,pos=0.75] {$P_{e}^2$} 
  -- (6+\x,5+\y+4);

	\node[below] at (2,\gridDown) {$L^{|}_{f}$};
	\node[below] at (4,\gridDown) {$L^{|}_{c}$};   
	\node[below] at (7,\gridDown) {$L^{|}_{b}$};   
	\node[below] at (9,\gridDown) {$L^{|}_{e}$};   
	\node[below] at (10,\gridDown) {$L^{|}_{g}$};   
	\node[below] at (11,\gridDown) {$L^{|}_{a}$};   
	\node[below] at (12,\gridDown) {$L^{|}_{d}$};   				    
	\node[left] at (\gridLeft,7) {$L^{-}_{d}$};       
	\node[left] at (\gridLeft,6) {$L^{-}_{g}$};  
	\node[left] at (\gridLeft,9) {$L^{-}_{c}$};  
	\node[left] at (\gridLeft,10) {$L^{-}_{b}$};  
	\node[left] at (\gridLeft,11) {$L^{-}_{a}$};  
	\node[left] at (\gridLeft,13) {$L^{-}_{e}$};  				
  
\end{tikzpicture}
\end{center}
\caption{The final status of the modifications $R_1$ and $R_2$ of  the
  $B_1$-EPG representation $R$  shown in
  \autoref{fig:B1SubseteqB3mGraphB1}(b) and its copy. This final status is
  obtained for any order $\prec$ of vertices in which $c\prec b$  and $e \prec
  g\prec a\prec d$.}
\label{fig:B1SubseteqB3mNewGridlines}
\end{figure}

Now we construct a  $B_{3}^{m}$-EPG representation of $G$ with a path $Q_{v}$ for 
every vertex $v$ in the following way. If the path $P_{v}$ consists of  a
single  horizontal  segment, we define $Q_{v}$ as the horizontal segment of $P_{v}^1$ in $R_{1}$ 
and call this segment the \emph{lower} segment of $Q_v$.
If the path $P_{v}$ consists of a single vertical segment, we define $Q_{v}$ as
the vertical segment of $P_{v}^2$ in $R_{2}$ and call this segment the \emph{upper} segment  of $Q_v$. 
If the path $P_{v}$ contains  a horizontal and a  vertical segment,
then the path $Q_{v}$ starts with   the horizontal segment of  $P_{v}^1$  in $R_{1}$;
this segment is called  the \emph{lower} segment of $Q_v$. Further the path 
$Q_{v}$  continues with a vertical segment lying on  the vertical grid line
$L^{|}_{v}$ and ending at the intersection of  $L^{|}_{v}$ and $L^{-}_{v}$.
This intersection is the upper end point of  this segment.
Starting at  this grid point  $Q_v$  proceeds  with a  horizontal segment lying on $L^{-}_{v}$ 
 until it reaches the vertical grid line containing the vertical segment  of
 $P_{v}^2$ in $R_{2}$. Finally $Q_v$  ends  with the vertical segment of 
$P_{v}^2$ in $R_{2}$;  this segment is called  the \emph{upper} segment of $Q_v$.
The result of this construction for  the graph given in \autoref{fig:B1SubseteqB3mGraphB1}(a)
and its $B_1$-EPG representation $R$ is depicted in \autoref{fig:B1SubseteqB3mB3m}.

Observe that this construction has  the following properties.  
If $P_v$ contains two segments, then $Q_v$ contains $4$ segments, the lower one
being the horizontal segment of $P_v^1$  in $R_1$ and the upper one being  the vertical segment of $P_v^2$ in $R_2$.
The two remaining  segments, a vertical and a horizontal one,  are contained in 
the two additionally introduced  grid lines that are used by no other path, 
because every path $Q_v$ uses only the additional grid lines $L^{|}_{v}$ and 
$L^{-}_{v}$  introduced exclusively for the vertex $v$.
If $P_v$ consists of one  horizontal (vertical) segment, then $Q_v$ consists also of  one  horizontal
(vertical) segment which coincides with the corresponding segment of $P_v^1$ ($P_v^2$) in $R_1$ ($R_2$) 
and is a lower (upper) segment.
It is easy to see that every path $Q_v$ in this construction  is  monotonic and bends at most $3$ times.

\begin{figure}[ht]
\begin{center}
\begin{tikzpicture}
  [scale=.65]
  
\tikzstyle{Q} = []
\tikzstyle{P} = [gray!85, line width = 2.5pt] 

\def \gridUp {15.3}
\def \gridDown {1.7}
\def \gridLeft {0.7}
\def \gridRight {19.3}

\draw[step=1cm,gray!25,line width=5pt] (\gridLeft,\gridDown) grid (\gridRight,\gridUp);

\def \x {0}
\def \y {0}

  \draw[gray!50,dashed,line width = 5pt] (2,\gridDown) -- (2,\gridUp);  
  
      \draw[gray!50,dashed,line width = 5pt] (4,\gridDown) -- (4,\gridUp);  
    
  \draw[gray!50,dashed,line width = 5pt] (7,\gridDown) -- (7,\gridUp);  
          
  \draw[gray!50,dashed,line width = 5pt] (9,\gridDown) -- (9,\gridUp);  
  
      \draw[gray!50,dashed,line width = 5pt] (10,\gridDown) -- (10,\gridUp); 

    \draw[gray!50,dashed,line width = 5pt] (11,\gridDown) -- (11,\gridUp); 
    
      \draw[gray!50,dashed,line width = 5pt] (12,\gridDown) -- (12,\gridUp);

\def \x {13}
\def \y {6}

   \draw[gray!50,dashed,line width = 5pt] (\gridLeft,\y) -- (\gridRight,\y);   
   
   \draw[gray!50,dashed,line width = 5pt] (\gridLeft,\y+1) -- (\gridRight,\y+1); 
   
   \draw[gray!50,dashed,line width = 5pt] (\gridLeft,\y+3) -- (\gridRight,\y+3);
   
  \draw[gray!50,dashed,line width = 5pt] (\gridLeft,\y+4) -- (\gridRight,\y+4); 
  
  \draw[gray!50,dashed,line width = 5pt] (\gridLeft,\y+5) -- (\gridRight,\y+5);      
                   
   \draw[gray!50,dashed,line width = 5pt] (\gridLeft,\y+7) -- (\gridRight,\y+7);

\def \x {0}
\def \y {0}
  
  \draw[P] (1+\x,2.85+\y) -- (2+\x,2.85+\y) node [below,pos=0.5] {$P_{f}^1$};
  
    \draw[P]  (1+\x,3.15+\y) -- (3+\x+1,3.15+\y) -- (3+\x+1,5+\y) 
    node[left,pos=0.2] {$P_{c}^1$};
    
  \draw[P]  (3+\x+2,5+\y) -- (3+\x+2,3.15+\y) -- (5+\x+2,3.15+\y) 
  node[above,pos=0.25] {$P_{b}^1$};
  
        \draw[P]  (5.15+\x,4+\y) -- (5.15+\x,5+\y) node [right,pos=0.5] 
        {$P_{e}^1$} -- (9+\x,5+\y);
        
  \draw[P]  (10+\x,2+\y) -- (10+\x,4.85+\y) node [left,pos=0.5] {$P_{g}^1$} -- 
  (8+\x,4.85+\y);        

  \draw[P]  (1+\x,3+\y) -- (11+\x,3+\y) -- (11+\x,4+\y) node[right,pos=0.5] 
  {$P_{a}^1$};  
  
  \draw[P]  (6+\x,2.85+\y) -- (12+\x,2.85+\y) -- (12+\x,2+\y) 
  node[left,pos=0.5] {$P_{d}^1$};

\def \x {13}
\def \y {6}

  \draw[P]  (4+\x,3+\y+3) -- (6+\x,3+\y+3) -- (6+\x,1+\y) node[left,pos=0.7] 
  {$P_{d}^2$};
  
  \draw[P]  (6.15+\x,\y) -- (6.15+\x,4.85+\y+4) node [right,pos=0.5] 
  {$P_{g}^2$} -- (5+\x,4.85+\y+4);      
  
    \draw[P]  (1+\x,3+\y) -- (2.85+\x,3+\y) -- (2.85+\x,5+\y +4) 
    node[left,pos=0.07] {$P_{c}^2$};
    
  \draw[P]  (3+\x,5+\y+4) -- (3+\x,3+\y+1) -- (5+\x,3+\y+1) node[above,pos=0.2] 
  {$P_{b}^2$};
  
  \draw[P]  (1+\x,3+\y+2) -- (5.85+\x,3+\y+2) -- (5.85+\x,4+\y+3) 
  node[left,pos=0.75] {$P_{a}^2$};   

    \draw[P]  (1+\x,3+\y+3) -- (2+\x,3+\y+3) node [below,pos=0.5] {$P_{f}^2$};
    
  \draw[P]  (3.15+\x,4+\y+3) -- (3.15+\x,5+\y+4) node [right,pos=0.75] 
  {$P_{e}^2$} -- (6+\x,5+\y+4);

  \draw[Q] (1,2.8) -- (2,2.8) node [left,pos=0.01] {$Q_{f}$};  
  
      \draw[Q] (1,3.15) -- (4,3.2) -- (4,9) -- (16.2,9) node[below,pos=0.04] {$Q_{c}$} -- (16.2,15);
      
  \draw[Q] (5,3.15) -- (7,3.2) -- (7,10)  -- (16,10) node[above,pos=0.05] {$Q_{b}$} -- (16,15);
  
   \draw[Q] (5,5) -- (9,5) --  node [left,pos=0.93] {$Q_{e}$} (9,13) -- (15.8,13) -- (15.8,15);
        
  \draw[Q] (8,4.8) -- (10,4.8) -- (10,6)  -- (19.2,6) node [below,pos=0.5] 
  {$Q_{g}$} -- (19.2,15);        

  \draw[Q] (1,3) -- (11,3) -- (11,11)  -- (18.8,11) node[above,pos=0.06] {$Q_{a}$}-- (18.8,13);  
  
  \draw[Q] (6,2.8) -- (12,2.8) -- (12,7)  -- (19,7) node[above,pos=0.49] 
  {$Q_{d}$} -- (19,12);

	\node[below] at (2,\gridDown) {$L^{|}_{f}$};
	\node[below] at (4,\gridDown) {$L^{|}_{c}$};   
	\node[below] at (7,\gridDown) {$L^{|}_{b}$};   
	\node[below] at (9,\gridDown) {$L^{|}_{e}$};   
	\node[below] at (10,\gridDown) {$L^{|}_{g}$};   
	\node[below] at (11,\gridDown) {$L^{|}_{a}$};   
	\node[below] at (12,\gridDown) {$L^{|}_{d}$};   				    
	\node[left] at (\gridLeft,7) {$L^{-}_{d}$};       
	\node[left] at (\gridLeft,6) {$L^{-}_{g}$};  
	\node[left] at (\gridLeft,9) {$L^{-}_{c}$};  
	\node[left] at (\gridLeft,10) {$L^{-}_{b}$};  
	\node[left] at (\gridLeft,11) {$L^{-}_{a}$};  
	\node[left] at (\gridLeft,13) {$L^{-}_{e}$};

\end{tikzpicture}
\end{center}
\caption{The obtained $B_{3}^{m}$-EPG representation of the graph given in 
\autoref{fig:B1SubseteqB3mGraphB1}(a).}
\label{fig:B1SubseteqB3mB3m}
\end{figure}

 What is left to  show is that the above  construction indeed leads to an EPG representation of $G$,
 i.e.\ that any two paths $Q_{v}$ and $Q_{v'}$  intersect if and 
only if the vertices $v$ and $v'$ are adjacent in $G$. To this end,  it is 
enough to show that two paths $Q_{v}$ and $Q_{v'}$ intersect, if and 
only if the paths $P_{v}$ and $P_{v'}$ intersect in the original  $B_{1}$-EPG representation $R$.

Assume $Q_{v}$ and $Q_{v'}$ intersect.
First consider the case that at least one of $Q_{v}$ and $Q_{v'}$ consists of 
only one segment. Assume without loss of generality that  $Q_{v}$
consists of one  horizontal segment.
Due to the properties of the construction this segment of $Q_{v}$ is a lower 
segment and hence the unique segment of $P_{v}^1$ in $R_1$. Consequently,  
again due to  the properties of the construction,  the segment of   $Q_{v'}$
intersecting $Q_{v}$ is the horizontal segment of $P_{v'}^1$ in $R_1$. Hence $P_{v}^1$ and $P_{v'}^1$   
intersect in the final $R_1$ on their horizontal segments. By construction this 
is only the case if $P_{v}$ and $P_{v'}$ intersect on their horizontal segments 
in $R$, because during the update of $R_1$ only vertical segments of paths are 
moved into  new grid lines in such a way that no new intersections are created.

Now assume that both  paths $Q_{v}$ and $Q_{v'}$ consist of more than one segment. 
There are no intersections of the paths in any additionally  introduced grid 
lines  because every additionally  introduced  grid line  is related to one  vertex
and the additionally  introduced  grid line related to different vertices are different. 
Moreover, by construction every additionally introduced vertical grid line
contains at most one segment of the path $P_v^1$ in $R_1$ representing the vertex $v$ to
which the line is related. Analogously every additionally introduced
horizontal  grid line contains at most one segment of the path $P_v^2$ in $R_2$
representing the vertex $v$ to which the line is related.  These considerations together with the fact that
$R_{1}$ and $R_{2}$ do not share any grid lines imply that  the intersection of  
$Q_{v}$ and $Q_{v'}$ involves either the lower segments of each path, 
or it involves the  upper segments of each path. 
Consequently,  according to the properties of the construction,  the paths   $Q_{v}$ and $Q_{v'}$ intersect in
their lower segments (in $R_1$)  or in their upper segments (in $R_2$).
In both situations we can proceed as in the previous case.

Next we show  the other direction of the equivalence, that is  we assume  that  
$P_{v}$ and $P_{v'}$ intersect in the original $B_1$-EPG representation $R$ of 
$G$ and show that  also $Q_{v}$ and $Q_{v'}$  intersect.
By construction,   if  $P_{v}$ and $P_{v'}$ intersect in a horizontal
grid line, then  the modified paths $P_{v}^1$ and $P_{v'}^1$ 
intersect in a  horizontal grid line  in $R_{1}$ at all times. Thus, the
properties of the construction imply the intersection of the lower   segments of $Q_{v}$ and $Q_{v'}$. 
Analogously,   if  $P_{v}$ and $P_{v'}$ intersect in a  vertical  grid line, then
the modified paths $P_{v}^2$ and  $P_{v'}^2$  intersect in a vertical grid line in $R_{2}$ at all times,
and the properties of the construction imply the intersection of the upper segments of $Q_{v}$ and $Q_{v'}$.
\end{proof}

Notice that it is an  open question  whether the result of
\autoref{B1SubseteqB3m}  is  the  best possible, that is  whether 
$\ell = 3$ is really the minimum $\ell$ such that $B_{1} \subseteq 
B_{\ell}^{m}$ or whether even $B_{1} \subseteq B_{2}^{m}$ holds.

We conclude this section with a few comments related to the size of the grid in 
EPG representations, that is  the number of horizontal and vertical grid lines
used by the paths in the EPG representation. Recently this question was
investigated by Biedl, Derka, Dujmovi\'c and Morin~\cite{TSmallGrid}.
The  size of the  $B_{3}^{m}$-EPG representation obtained by the construction in   the proof
of  \autoref{B1SubseteqB3m} depends on   the size of the $B_{1}$-EPG representation of the graph;
in the worst  case the constructed $B_{3}^{m}$-EPG representation     uses  twice
as many horizontal grid lines and twice as many vertical grid line as compared
to the original  $B_{1}$-EPG representation and an additional horizontal and vertical grid line for every vertex.
This gives rise to the natural  question whether the construction given in the proof of \autoref{B1SubseteqB3m}
is the best possible with respect to the size of the grid. Currently we cannot  answer  this question. 

When considering  the dependency of the grid size of the $B_{3}^{m}$-EPG representation on the
grid size of the starting  $B_{1}$-EPG representation in the  construction
given in  \cite{TSmallGrid}, another natural question arises.
What is the smallest   possible size of the grid in a  $B_1$-EPG representation
of a $B_1$-EPG graph?
In the small  EPG representations dealt with in \cite{TSmallGrid} no 
fixed number of bends is considered,   so the question above  is also open.

\section{Conclusions and Open Problems}
\label{sec:Conclusions}

In this paper, we investigated the relationship between the classes 
$B_k$ and $B_\ell^m$ for different values of $k, \ell\in \nz$.

In particular, we considered the  bend number and  the
monotonic bend number of complete bipartite graphs. We extended the already known  result
$b(K_{m,n})\leqslant 2m-2$  (see \cite{F}) to the monotonic bend number, that is  we proved
$b^m(K_{m,n})\leqslant 2m-2$  for any $3 \leqslant m \leqslant n$, and showed  
that the  upper bound $2m-2$ is  attained  for smaller values of $n$ in the monotonic case.

As  auxiliary results we derived two different inequalities which  hold whenever 
$K_{m,n}$ is in $B_{k}^{m}$. We used these inequalities to prove the strict
inclusion  $B_{k}^{m} \subsetneqq B_{k}$  for $k \in \{3,5\}$ and $k \geqslant 7$. 
Furthermore, we showed that $B_{2}^{m} \subsetneqq B_{2}$ by specifying a 
particular graph which is in $B_{2}$ but not in $B_{2}^{m}$. 
Thus, we gave an almost complete   answer to the  open question on the correctness of $B_{k}^{m} 
\subsetneqq B_{k}$ for $k>1$,   posed in  \cite{startpaper}. We showed that  $B_{k}^{m} 
\subsetneqq B_{k}$ holds  for  all $k>1$ except for $k\in \{4,6\}$.
Of course, it is a pressing question to prove $B_{k}^{m} \subsetneqq B_{k}$  also for the remaining cases $k=4$ and $k=6$.
In order to prove $B_{4}^{m} \subsetneqq B_{4}$ by using  \autoref{mlbl} it would be enough to show that
$K_{4,49} \in B_{4}$ or  $K_{5,36} \in B_{4}$. In the case of  $k=6$ it would
suffice to show that $K_{5,102}$, $K_{6,71}$ or  $K_{7,63}$ is in $B_{6}$.
\smallskip

Additionally,  we considered the relationship of $B_k$ and $B_\ell^m$ for 
$\ell > k$. In this context the existence and the identification of
non-trivial functions $f, g\colon \nz\to \nz$
 such that $b^m(G)\leqslant f(b(G))$ and $b(G)\leqslant g(b^m(G))$ holds for 
 any graph
$G$ (or for any graph belonging to some particular class of graphs) is a general  question the
answer of which seems to be out of reach at the moment. 
However, we could deal with some  specific problems related to 
that question.  

In particular, we showed that for every $k \geqslant 5$ there is a graph in  $B_{k}$ which is 
not in $B_{2k-9}^{m}$, proving that $B_k \not \subseteq B_{2k-9}^m$ holds. 
In terms of the function $f$ above this implies $f(x)\geqslant 2x-8$ for  all $x\geqslant 5$, $x\in \nz$. 
Furthermore, we deduced that   $B_{k+1}^{m}\subseteq B_{k}$ does not hold in general by providing
 a graph that is $B_{2m-2}^m$ but not in $B_{2m-3}$ for every $m\geqslant 3$. 
 This implies that $g(2x) \geqslant 2x$ for   all $x\geqslant 2$,  $x\in \nz$ for the above function $g$. 
 
 Further, we showed that $B_{1}   \subseteq B_{3}^{m}$,  but we do not know whether this result is the best possible,
i.e.\  whether there is a graph in $B_{1}$ which is not in $B_{2}^{m}$ or whether  $B_{1} \subseteq B_{2}^{m}$ holds.

Another natural question which seems to be  simple but has  not been answered
 yet concerns the  inclusion    $B_{k}^{m} \subseteq B_{k+1}^{m}$. 
 We conjecture  this inclusion to be  strict, that is we conjecture that
 $B_{k}^{m} \subsetneqq B_{k+1}^{m}$ holds. A possible approach to prove this
 conjecture for a given $k\in \nz$ would be to  specify a particular pair of  natural numbers $(m,n)$
 with 
 $3 \leqslant m \leqslant n$ for which (a)   some Lower-Bound-Lemma implies 
 $K_{m,n}\not\in
 B_k^m$  and (b) a $B_{k+1}^m$-EPG representation can be constructed.   
The identification of such a pair  $(m,n)$,  $3 \leqslant m \leqslant n$, would 
clearly  prove  the existence of a
 complete bipartite graph $K_{m,n}$  with monotonic bend number equal to $k$ for any
 $k\geqslant 2$.

Finally,  the size of (monotonic)
EPG representations is another subject of interest.  In particular,   it would 
be
interesting to determine the minimum number of  grid lines 
  needed for a  $B_{k}$-EPG representation and $B_{\ell}^{m}$-EPG
representation of a graph $G$ with $b(G)\leqslant k$ and $b^m(G)\leqslant 
\ell$, respectively.


\bibliographystyle{plain}
\bibliography{papers}

\end{document}